\documentclass{amsart}
\usepackage{graphicx} 
\usepackage{amsmath}
\usepackage{amssymb}
\usepackage{amsthm}
\usepackage{mathrsfs}
\usepackage{xcolor}
\usepackage{appendix}
\usepackage{url}
\newtheorem{theorem}{Theorem}
\newtheorem{lemma}{Lemma}[section]
\newtheorem{corollary}[lemma]{Corollary}  
\newtheorem{proposition}[theorem]{Proposition}
\newtheorem{conjecture}{Conjecture}

\newtheorem{example}{Example}
\newtheorem{definition}{Definition}

\title[A pluricomplex error-function kernel at the edge]{A pluricomplex error-function kernel at the edge of polynomial Bergman kernels}

\author[L.D. Molag]{Leslie Molag}
\address{L.D. Molag: Mathematics Department \\ Carlos III University of Madrid\\ Avda. de la Universidad, 30. 28911 Leganés\\ Spain}
\email{lmolag@math.uc3m.es }

\begin{document}

\maketitle

\begin{center}
\textit{Dedicated to Gernot Akemann on the occasion of his 60th birthday}
\end{center}

\begin{abstract}
We consider polynomial Bergman kernels with respect to exponentially varying weights $e^{-n \mathscr Q(z)}$ depending on a potential $\mathscr Q:\mathbb C^d\to\mathbb R$. We use these kernels to construct determinantal point processes on $\mathbb C^d$. Under mild conditions on the potential, the points are known to accumulate on a compact set $S_{\mathscr Q}$ called the droplet. We show that the local behavior of the kernel in the vicinity of the edge $\partial S_{\mathscr Q}$ is described in two different ways by universal limiting kernels. One of these limiting kernels is the error-function kernel, which is ubiquitous in random matrix theory, while the other limiting kernel is a new universal object: a multivariate version of the error-function kernel. We prove the universality in two qualitatively different settings: (i) the tensorized case where $\mathscr Q$ decomposes as a sum of planar potentials, and (ii) the case where $\mathscr Q$ is rotational symmetric. We also explicitly identify the subspace of the Bargmann-Fock space where the multivariate error-function kernel is reproducing. To treat regular edge points that exhibit a certain type of bulk degeneracy, we also find the behavior of the planar kernel with number of terms of order $o(n)$ instead of $n$. Lastly, we prove an edge scaling limit for counting statistics. 
\end{abstract}

\tableofcontents

\section{Introduction}


\subsection{Polynomial Bergman kernels on $\mathbb C^d$}

Consider an exponentially varying weight 
\[
\mathscr W(z) = e^{-n \mathscr Q(z)}
\]
where $\mathscr Q:\mathbb C^d\to \mathbb R$ is called the potential, and $n$ is a positive integer. Under certain growth and regularity conditions we may form the polynomial Bergman kernel with respect to this weight, that is, the reproducing kernel on the space of multivariate complex polynomials $\mathscr P$ of degree $<n$ with respect to the norm
\[
\|\mathscr P\|_{n\mathscr Q}^2 = \int_{\mathbb C^d} |\mathscr P(z)|^2 \mathscr W(z) \, d\omega(z),
\]
where $d\omega(z)=dA(z_1)\cdots dA(z_d)$, and $dA(x+i y)=\pi^{-1} dx \, dy$ is the standard Lebesgue (area) measure on $\mathbb C$ normalized by a factor $\pi$. In this paper, we shall impose the growth condition
\begin{align} \label{eq:growthCond}
\liminf_{|z|\to\infty} \frac{\mathscr Q(z)}{\log |z|^2} > 1.
\end{align}
Assuming that $e^{-n \mathscr Q}$ is also integrable, we may then construct a basis of $n$-dependent polynomials $\{\mathscr P_j(z) : j\in J_n\}$ of total degree $<n$ for some index set $J_n$, satisfying the orthogonality conditions.
\begin{align} \label{eq:defOP}
\int_{\mathbb C^d} \mathscr P_j(z) \overline{\mathscr P_k(z)} \mathscr W(z) \, d\omega(z) = \delta_{j,k}.
\end{align}
Given $\mathscr Q$ and $n$, the polynomial Bergman kernel is unique, and explicitly given by the formula
\[
\boldsymbol k_n(z,w) = \sum_{j\in J_n} \mathscr P_j(z) \overline{\mathscr P_j(w)}, \qquad z,w\in\mathbb C^d.
\]
It is independent of the choice of basis of our orthogonal polynomials. 
A related object that is often considered is the \textit{weighted} polynomial Bergman kernel, defined as
\begin{align*}
    \mathscr K_n(z,w) = \sqrt{\mathscr W(z) \mathscr W(w)}\sum_{j\in J_n} \mathscr P_j(z) \overline{\mathscr P_j(w)}, \qquad z,w\in\mathbb C^d.
\end{align*}
It is the reproducing kernel on the space of weighted polynomials
\[
\mathcal W_n = \{e^{-\frac12 n \mathscr Q} \mathscr P : \mathscr P\in \mathbb C[z], \, \deg \mathscr P<n\}.
\]
In this setup, one may form the determinantal point process (DPP) with joint probability density function proportional to the $N_n^d\times N_n^d$ matrix
\begin{align*}
    \det \left(\mathscr K_n(z_j,z_k)\right)_{j,k\in J_n}, \qquad z_j\in \mathbb C^d \quad \forall j\in J_n,
\end{align*}
where $N_n^d=\binom{n+d-1}{d}$.
With probability $1$ the number of (distinct) points in a configuration of the DPP is
\[
\int_{\mathbb C^d} \mathscr K_n(z,z) \, d\omega(z) = |J_n| = N_n^d.
\]
For large $n$ the number of points behaves like $N_n^d \sim n^d/d!$. DPPs were introduced by Macchi \cite{Macchi1975}. They exhibit a built in repulsion between the points, evident from the determinantal structure.
The density of points is given by the 1-point correlation function $\mathscr K_n(z,z)$. Henceforth, we shall denote the 1-point correlation function by
\[
\mathscr K_n(z) = \mathscr K_n(z,z).
\]
It is sometimes called the Bergman function, and its multiplicative inverse the Christoffel function (in some cases one means the unweighted versions instead). It satisfies the special and very convenient extremal property \cite{Bergman1950}:
\[
\mathscr K_n(z) = \sup_{f\in \mathcal W_n\setminus\{0\}}\frac{\displaystyle |f(z)|^2}{\displaystyle\|f\|_{L^2}^2}.
\]
The exact setting described above was the topic of a paper by Berman \cite{Berman2009}, who derived results with far-reaching consequences. The setting can be extended to complex (Kähler) manifolds \cite{Tian1990, Zelditch1998, ZelditchZhou2004, ShiffmanZelditch2004, Catlin1999, BermanBerndtssonSjostrand2008, Berman2008, Berman2009B, Demailly2012, PokornySinger2014, ComanMarinescu2017, Berman2018} (see, e.g., \cite{Lemoine2022, Ioos2025} for more recent papers), although in this paper we restrict our attention to the pluripotential setting with weighted polynomials on $\mathbb C^d$. 
Under mild conditions on $\mathscr Q$, it is known that the points accumulate on a compact set $S_{\mathscr Q}$. Namely, when $\mathscr Q$ is assumed to be $C^{1,1}$ (and \eqref{eq:growthCond} holds), Berman \cite{Berman2009, Berman2018} proved that there exists a compact set $S_{\mathscr Q}$ such that
\begin{align*}
\lim_{n\to\infty} \frac1{N_n^d} \mathscr K_n(z) = \mathfrak{1}_{S_{\mathscr Q}}(z) d!  \det \partial \bar\partial \mathscr Q(z)
\end{align*}
as $n\to\infty$ in $L^1(\mathbb C^d)$, 
where $\partial\bar\partial\mathscr Q$ denotes the complex Hessian matrix
\begin{align*}
     \partial\bar\partial\mathscr Q(z) = 
    \begin{pmatrix}
        \displaystyle \frac{\partial^2\mathscr Q(z)}{\partial z_1\partial\bar z_1} & \displaystyle\frac{\partial^2\mathscr Q(z)}{\partial z_1\partial\bar z_2} & \hdots & \displaystyle
        \frac{\partial^2\mathscr Q(z)}{\partial z_1\partial\bar z_d}\\
        \displaystyle\frac{\partial^2\mathscr Q(z)}{\partial z_2\partial\bar z_1} & \displaystyle \frac{\partial^2\mathscr Q(z)}{\partial z_2\partial\bar z_2} & \hdots & \displaystyle
        \frac{\partial^2\mathscr Q(z)}{\partial z_2\partial\bar z_d}\\
        \vdots & \vdots & \ddots & \vdots\\
        \displaystyle\frac{\partial^2\mathscr Q(z)}{\partial z_d\partial\bar z_1} & \displaystyle\frac{\partial^2\mathscr Q(z)}{\partial z_d\partial\bar z_2} & \hdots & \displaystyle
        \frac{\partial^2\mathscr Q(z)}{\partial z_d\partial\bar z_d}
    \end{pmatrix},
\end{align*}
where, writing $z_j = x_j + i y_j$, we have
\[
\frac{\partial}{\partial z_j} = \frac12 \left(\frac{\partial}{\partial x_j} - i\frac{\partial}{\partial y_j}\right), \qquad
\frac{\partial}{\partial \bar z_j} = \frac12 \left(\frac{\partial}{\partial x_j} + i\frac{\partial}{\partial y_j}\right).
\]
Equivalently, the measure $\mathscr K_n(z) d\omega(z)$ converges weakly to the measure
\[\mathfrak{1}_{S_{\mathscr Q}}(z) d! \det(\partial \bar\partial \mathscr Q(z)) d\omega(z).\] 
This limiting measure is well-known in pluripotential theory (e.g., see \cite{BedfordTaylor1982, Klimek1991, Demailly2012}) and is called the Monge-Ampère measure.\footnote{Some authors define the Monge-Ampère measure using the $d$-fold wedge product $(d d^c \mathscr Q)^d$.}
We call the compact set $S_{\mathscr Q}$ the \textit{droplet}. 
The interior of the droplet, $\mathring{S}_{\mathscr Q}$, we call the \textit{bulk} (we are deviating slightly from Berman's terminology in \cite{Berman2018} here). The boundary $\partial S_{\mathscr Q}$ is called the \textit{edge}.  As proved by Berman \cite{Berman2009, Berman2018}, under the condition that $\mathscr Q$ is $C^{1,1}$, we equivalently have
\begin{align} \label{eq:ddcQ=ddcChQ}
\mathfrak{1}_{S_{\mathscr Q}}(z) \det \partial \bar\partial \mathscr Q(z)
= \det \partial \bar\partial \check{\mathscr Q}(z),
\end{align}
almost everywhere on $S_{\mathscr Q}$, where the \textit{obstacle function} $\check{\mathscr Q}$ is defined as the pointwise supremum
\begin{align} \label{eq:defPluriObstF}
    \check{\mathscr Q}(z)
    = \sup\{q(z) : q\in\mathcal L(\mathbb C^d), \, q\leq \mathscr Q\},
\end{align}
where $\mathcal L(\mathbb C^d)
$ denotes the Lelong class, consisting of all plurisubharmonic functions $\mathbb C^d\to [-\infty,\infty)$ of logarithmic growth at infinity, 
\[
q(z) \leq \log |z|^2 + \mathcal O(1)
\]
as $|z|\to\infty$. A function $q:\mathbb C^d\to [-\infty,\infty)$ is called plurisubharmonic when it is upper semi-continuous, and either subharmonic or identically $-\infty$ on any restriction to a complex line in $\mathbb C^d$.  We define the \textit{predroplet} as the coincidence set
\[
S_{\mathscr Q}^\star = \{z\in\mathbb C : \check{\mathscr Q}(z)=\mathscr Q(z)\}.
\]
We obviously have $S_{\mathscr Q}\subset S_{\mathscr Q}^\star$. 

For $d=1$, the identity \eqref{eq:ddcQ=ddcChQ} holds almost everywhere on $\mathbb C$, i.e., $\check{\mathscr Q}$ is harmonic outside $S_{\mathscr Q}$. When we explicitly consider the case $d=1$, we shall denote the potential by $Q$ rather than the calligraphic symbol $\mathscr Q$, and $P_j$ denote the (unique) degree $j$ complex polynomials with positive leading coefficient that satisfy the orthogonality relations
\begin{align} \label{eq:orthCondPj}
\int_{\mathbb C} P_j(z) \overline{P_k(z)} e^{-n Q(z)} \, dA(z) = \delta_{j,k}, 
\qquad j,k=0,1,\ldots
\end{align}
The case $d=1$ forms a very active research area. Early works investigating (specifically) the $d=1$ case are \cite{ElbauFelder2005, ZabrodinWiegmann2006, HedenmalmMakarov2013}. The corresponding DPP describes the eigenvalues of random normal matrices (RNM), as well as the location of points of 2D Coulomb gases (for a particular temperature). A related picture, is that it describes incompressible quantum Hall fluids \cite{Wiegmann2002}. Furthermore, for the particular choice $Q(z)=|z|^2$ the model describes noninteracting Fermions in a rotating trap, were the mutual repulsion is caused by the Pauli exclusion principle \cite{LacroixMajumdarSchehr2019}. In these models, one considers random $n\times n$ complex normal matrices $M$ distributed by
\[
\frac1{\mathcal Z_n}  \exp\left(-n\mathrm{Tr} \, Q(M)\right),
\]
for some (planar) potential $Q:\mathbb C\to \mathbb R$, where $\mathcal Z_n$ is the normalization constant, and $\mathrm{Tr} \, Q(M)$ is interpreted as the sum of $Q$ over all eigenvalues of $M$. It turns out that the JPDF takes a particularly nice form in this case: it is of the form
\[
\frac1{Z_n} \prod_{1\leq j<k\leq n} |z_j-z_k|^2 \prod_{j=1}^n e^{-n Q(z_j)},
\]
where $Z_n$ is the normalization constant, and $z_1, \ldots, z_n\in\mathbb C$ are the eigenvalues of $M$.
A standard heuristic continuum limit argument provides us with a potential theoretic minimization problem. Namely, minimize the (energy) functional
\[
J(\mu) = \int_{\mathbb C} \int_{\mathbb C} \log \frac1{|z_j-z_k|} \, d\mu(z) d\mu(w) + \int_{\mathbb C} Q(z) d\mu(z)
\]
over all compactly supported Borel probability measures $\mu$ on $\mathbb C$. Under mild conditions on $Q$ the minimizer $\mu=\sigma_Q$, the \textit{equilibrium measure}, exists. In fact, we know that it is explicitly given by the Monge-Ampère measure
\begin{align*}
d\sigma_Q(z) = \Delta Q(z) \, \mathfrak{1}_{S_Q^\star}(z) \, dA(z),
\end{align*}
For $d>1$ the JPDF of the points has a more complicated form, and this continuum limit argument cannot be applied. In particular, there is no straightforward potential theoretic minimization problem. Interestingly, for $d>1$, the JPDF shows that there is not only mutual repulsion between the points, but there is also an avoidance of certain geometric patterns such as circles. 

\subsection{Local scaling limits}

Berman was able to prove that the local asymptotics around interior (bulk) points in $\mathring S_{\mathscr Q}$ are governed by a multivariate generalization of the complex Ginibre kernel \cite[Theorem 3.9]{Berman2009}, which, for $d=1$ first appeared in \cite{Ginibre1965}. Namely, if one assumes that $\mathscr Q$ is $C^\infty$ in a neighborhood of a bulk point $z_0\in\mathring S_{\mathscr Q}$ and $\partial\bar\partial \mathscr Q(z_0)$ is strictly positive definite, then one finds\footnote{Berman does not present the result explicitly in this form, but the above formula can be extracted from \cite{Berman2009}.}
\begin{multline} \label{eq:bulkLimit}
\lim_{n\to\infty} \frac{c_n(z_0,\xi) \overline{c_n(z_0,\eta)}}{\det n\partial \bar\partial \mathscr Q(z_0)} 
\mathscr K_n\left(z_0+\frac{\xi}{\sqrt{n \partial \bar\partial \mathscr Q(z_0)}}, z_0+\frac{\eta}{\sqrt{n\partial \bar\partial \mathscr Q(z_0)}} \right)\\
= \exp\left(\xi\cdot \eta - \frac{|\xi|^2+|\eta|^2}2\right), \quad \xi, \eta\in\mathbb C^d,
\end{multline}
where $\xi\mapsto c_n(z_0, \xi)$ is a unimodular factor, and $\xi\cdot\eta=\xi_1\overline{\eta_1}+\cdots\xi_d\overline{\eta_d}$ denotes the complex dot (inner) product. Here and henceforth, we use the convention that
\[
\frac{\xi}{\sqrt{\partial\bar\partial \mathscr Q(z_0)}} = (\partial\bar\partial \mathscr Q(z_0))^{-1/2} \xi,
\]
i.e. $(\partial\bar\partial \mathscr Q(z_0))^{-1/2}$ is applied from the left to whatever is in the numerator. 
To abbreviate notation henceforth, we introduce the following definition. In our case $X\subset\mathbb C^d$ always. 

\begin{definition}
Given $f,g : X\times X\to \mathbb C$, we say that $f$ and $g$ \textit{equal up to co-cycles} (on $X\times X$), notation $f\equiv g$, if there exists a unimodular function $c : X\to \mathbb T$ such that $c(z) \overline{c(w)} f(z,w)=g(z,w)$. When $f_n : X\times X\to\mathbb C$ is a sequence, we write
\begin{align*}
\lim_{n\to\infty} f_n \equiv g
\end{align*}
(uniformly) if there exists a sequence $c_n : X\to \mathbb T$ such that (uniformly) 
\begin{align*}
\lim_{n\to\infty} c_n(z) \overline{c_n(w)} f_n(z,w) = g(z,w), \quad \forall (z,w)\in X\times X.
\end{align*}
\end{definition}

Note that, if correlation kernels agree up to co-cycles, they induce the same DPP. 
With this definition we may thus write instead
\begin{multline*}
\lim_{n\to\infty} \frac{1}{\det n\partial \bar\partial \mathscr Q(z_0)} 
\mathscr K_n\left(z_0+\frac{\xi}{\sqrt{n\partial \bar\partial \mathscr Q(z_0)}}, z_0+\frac{\eta}{\sqrt{n\partial \bar\partial \mathscr Q(z_0)}} \right)\\
\equiv \exp\left(\xi\cdot \eta - \frac{|\xi|^2+|\eta|^2}2\right).
\end{multline*}
A limiting behavior of this type is what is known as \textit{Universality}: in a suitably scaled regime, one finds a limiting behavior that is independent of the initial data, i.e., of the potential $\mathscr Q$. The universality class is not limited to our setting or Bermans in \cite{Berman2018}, the limiting kernel is also found for weights $e^{-\mathscr Q}$ in the case that $\mathscr Q$ has positive curvature everywhere (in the more general setting of Kähler manifolds), as implied by the Tian-Zelditch-Catlin expansion.

The limiting kernel factorizes into planar Ginibre kernels and can be considered a pluricomplex version of the ($d=1$) Ginibre kernel, namely
\begin{align*}
\exp\left(\xi\cdot \eta - \frac{|\xi|^2+|\eta|^2}2\right)
= \prod_{k=1}^d \exp\left(\xi_k \overline{\eta_k}-\frac{|\xi_k|^2+|\eta_k|^2}2\right).
\end{align*}
Note that the condition that $\partial\bar\partial \mathscr Q(z_0)$ is strictly positive definite, is equivalent to saying that $\mathscr Q$ is strictly plurisubharmonic on a neighborhood of $z_0$. 
In fact the conditions can be considerably weakened, Berman showed in \cite[Theorem 1.1]{Berman2018} that an analogous statement holds under the condition that $\mathscr Q$ is locally $C^{1,1}$, expressed with the help of the eigenvalues of the complex Hessian in the distributional sense (i.e., the Monge-Ampère operator).

Much less is known concerning scaling limits at the boundary (edge) of the droplet, except for the case $d=1$. In the seminal paper \cite{HedenmalmWennman2021}, it was proved by Hedenmalm and Wennman under mild conditions on $Q:\mathbb C\to\mathbb R$, that for $z_0\in\partial S_{\mathscr Q}$ and $\vec n(z_0)$ the outward unit normal vector at $z_0$ on $\partial S_{\mathscr Q}$, we have
\begin{multline*}
    \lim_{n\to\infty} \frac1{n\Delta Q(z_0)} \mathscr K_n\left(z_0+\frac{\vec n(z_0)\xi}{\sqrt{n\Delta Q(z_0)}}, z_0+\frac{\vec n(z_0)\eta}{\sqrt{n\Delta Q(z_0)}}\right)\\
    \equiv
        \frac12 \exp\left(\xi\overline\eta-\frac{|\xi|^2+|\eta|^2}2\right)
        \mathrm{erfc}\left(\frac{\xi+\overline\eta}{\sqrt 2}\right),
\end{multline*}
locally uniformly for $\xi,\eta\in\mathbb C$ as $n\to\infty$. Here $\Delta=\partial\bar\partial=\frac14 (\partial_x-i\partial_y)(\partial_x+i\partial_y)$ denotes the quarter Laplacian. We define the complementary error-function as
\[
\mathrm{erfc} \, z = \frac{2}{\sqrt\pi} \int_z^\infty e^{-\zeta^2} d\zeta.
\] 
The limiting kernel on the right-hand side is called the error-function (or erfc) kernel, or Faddeeva plasma kernel (who first tabulated it \cite{FaddeyevaTerentev1961}). For explicit models, the limiting kernel was already derived before \cite{ForresterHonner1999, LeeRiser2016}. The error-function kernel does not only occur as a local scaling limit for random normal matrices, Tao and Vu proved that it also shows up in other non-Hermitian random matrix models called independent entry matrices \cite{TaoVu2015}.

\subsection{Local edge universality conjectures}

Concerning the general $d\geq 1$ setting, Berman left ``the case of the
boundary (edge) properties as [a] challenging open problem for the future'' \cite{Berman2018}. The purpose of this paper is to argue that there are two different (though related) universal limiting behaviors to be expected. There has been some progress recently.
The limiting erfc kernel was shown to appear in $d>1$ as well as in \cite{Molag2023} for a one parameter family of potentials $${\mathscr Q(z)=|z|^2-\tau \,\mathrm{Re}(z_1^2+\ldots+z_d^2)},$$ where $\tau\in[0,1)$ is a fixed parameter. For $d=1$ this model, introduced in \cite{Girko1986, SommersCrisantiSompolinskyStein1988}, is a highly researched random matrix model known by the name of (complex) elliptic Ginibre ensemble. For $d>1$ the model was first introduced in \cite{AkemannDuitsMolag2023}. By now, we feel the associated DPP deserves a name and we shall call it the pluripotential elliptic Ginibre ensemble.  In this case the droplet is a hyperellipsoid (or $2d$ dimensional sphere when $\tau=0$). It was shown in \cite{Molag2023} that for $z_0\in\partial S_{\mathscr Q}$ and $\vec n(z_0)\in\mathbb C^d$ the outward unit normal vector at $z_0$ on $\partial S_{\mathscr Q}$ 
\begin{multline*}
    \lim_{n\to\infty} \frac1{n^d} \mathscr K_n\left(z_0+\frac{\vec n(z_0)\xi}{\sqrt{n}}, z_0+\frac{\vec n(z_0)\eta}{\sqrt{n}}\right)\\
    \equiv
        \frac12 \exp\left(\xi\overline\eta-\frac{|\xi|^2+|\eta|^2}2\right)
        \mathrm{erfc}\left(\frac{\xi+\overline\eta}{\sqrt 2}\right),
\end{multline*}
uniformly for $\xi,\eta\in\mathbb C$ with $|\xi|,|\eta|=\mathcal O(n^\nu)$ as $n\to\infty$, for any fixed $\nu\in(0,\frac13)$. 
One of the main contributions of this paper, is to provide strong evidence that this limiting edge behavior is universal. Based on the results in \cite{Molag2023} and the current paper, we expect the following conjecture to hold, which says that the known $d=1$ universality of the error-function kernel extends to $d>1$.

\begin{conjecture} \label{con:1}
Suppose that $\mathscr Q:\mathbb C^d\to\mathbb R$ is $C^2$ and strictly plurisubharmonic.
    Assume furthermore that the droplet $S_{\mathscr Q}$ has a smooth boundary. Let $z_0\in\partial S_{\mathscr Q}$ and denote by $\vec n(z_0)\
    \in\mathbb C^d$ the outward unit normal vector at $z_0$ on $\partial S_{\mathscr Q}$.
Then 
    \begin{multline} \label{eq:con1}
        \lim_{n\to\infty} \frac1{\det n \partial\bar\partial\mathscr Q(z_0)} \mathscr K_n\left(z_0+\frac{\vec n(z_0)\xi}{\sqrt{n \partial\bar\partial\mathscr Q(z_0)}} ,z_0+\frac{\vec n(z_0) \eta}{\sqrt{n \partial\bar\partial\mathscr Q(z_0)}}\right)\\
        \equiv
        \frac12 \exp\left(\xi\overline\eta-\frac{|\xi|^2+|\eta|^2}2\right)
        \mathrm{erfc}\left(\frac{\xi+\overline\eta}{\sqrt 2}\right)
    \end{multline}
    locally uniformly for $\xi,\eta\in\mathbb C$ .
\end{conjecture}

Note that the conditions on $\partial S_{\mathscr Q}$ imply that $z_0$ is a regular boundary point, and $\vec n(z_0)$ to exist. On the diagonal $\xi=\eta$ a similar universal limiting behavior was observed in different but related geometric settings concerning partial Bergman kernels \cite{RossSinger2017, ZelditchZhou2019}.

Conjecture \ref{con:1} is the natural higher-dimensional analogue of the known $d=1$ universal local edge scaling limit. The main novelty of the current paper is that there is a second type of universal behavior to be expected locally at the edge. It originates from the question: if there is a universal limiting object depending on $d$ complex variables in the bulk, namely \eqref{eq:bulkLimit}, would there also be a universal limiting object depending on $d$ complex variables at the edge? At first glance there does not need to be. Once we rescale a bulk point $z_0\in \mathring S_{\mathscr Q}$ using the complex Hessian matrix, in some sense playing the role of curvature, the coordinates near $z_0$ behave isotropically, and we see this in the local scaling limit \eqref{eq:bulkLimit} which only depends on the angle between $\xi$ and $\eta$ (up to a phase factor). On the contrary, when $z_0\in \partial S_{\mathscr Q}$, the situation looks rather anisotropic: even if we use the complex Hessian to rescale our coordinates, there are vectors pointing inside and vectors pointing outside of the droplet, and the kernel behaves quite differently in $\mathring S_{\mathscr Q}$ and $\mathbb C^d\setminus S_{\mathscr Q}$. Indeed, as we shall see, one generally finds a geometric remnant in the limiting behavior in the form of the outward unit normal vector on $\partial S_{\mathscr Q}$ at $z_0$, and thus no apparent universality. Nevertheless, in the models we consider it turns out that one may obtain a universal scaling limit, if one first rotates the coordinates appropriately using a unitary matrix. We give strong evidence for this in Theorem \ref{thm:generalMain} and Theorem \ref{thm:generalMain2} below, and specifically for the rotation argument in a general setting in Proposition \ref{lem:unitVector} below. Our final conjecture is that there is a universal local edge behavior given by a multivariate error-function kernel. 
 

\begin{conjecture} \label{con:2}
Suppose that $\mathscr Q:\mathbb C^d\to\mathbb R$ is $C^2$ and strictly plurisubharmonic.
    Assume furthermore that the droplet $S_{\mathscr Q}$ has a smooth boundary.\\
For any $z_0\in\partial S_{\mathscr Q}$ there is a unitary matrix $\mathscr U(z_0)$ such that 
    \begin{multline} \label{eq:con2}
        \lim_{n\to\infty} \frac1{\det n \partial\bar\partial\mathscr Q(z_0)} \mathscr K_n\left(z_0+\frac{\mathscr U(z_0)\xi}{\sqrt{n \det\partial\bar\partial\mathscr Q(z_0)}} ,z_0+\frac{\mathscr U(z_0) \eta}{\sqrt{n \det\partial\bar\partial\mathscr Q(z_0)}}\right)\\
        \equiv
        \frac12 \exp\left(\xi\cdot\eta-\frac{|\xi|^2+|\eta|^2}2\right)
        \mathrm{erfc}\left(\sum_{k=1}^d \frac{\xi_k+\overline{\eta_k}}{\sqrt {2d}}\right)
    \end{multline}
    locally uniformly for $\xi,\eta\in\mathbb C^d$.
\end{conjecture}

It will be interesting to see if the universality class of this new limiting kernel extends to other models (as in the $d=1$ case).  It is to be expected that some conditions may be weakened, e.g., it is probably enough that $\mathscr Q$ is strictly plurisubharmonic on a neighborhood of $\partial S_{\mathscr Q}$, as long as we impose that $S_{\mathscr Q}=S_{\mathscr Q}^\star$. 

\subsection{Summary of the main results}

    Our main results show that Conjecture \ref{con:1} and Conjecture \ref{con:2} hold for two qualitatively different settings.
    
    \begin{itemize}
    	\item[(i)] The setting where the weight factorizes as a \textit{product of planar weights}, 
\[
\mathscr Q(z) = \sum_{k=1}^d Q_k(z_k),
\]
where for each $k=1,\ldots,d$ we have functions $Q_k:\mathbb C\to \mathbb R$. 
    	\item[(ii)] The setting where the weight is \textit{rotational symmetric}, 
    \[
    \mathscr Q(z)=V(|z|),
    \]
    for some function $V:[0,\infty)\to\mathbb R$. 
    \end{itemize}
It is easy to show that, when $d>1$, only the pluricomplex version of the Ginibre ensemble, corresponding to $\mathscr Q(z)=|z|^2$, is in the intersection of the two settings (up to rescaling). 
We will have to impose some regularity and growth conditions in the two settings. In setting (i) we shall assume that all $Q_k$ are $[0,1]$-admissible. We postpone the exact definition of $[0,1]$-admissibility to Section \ref{sec:2} below (Definition \ref{def:01Adm}), but mention that it is a straightforward generalization of the concept of $\tau$-admissibility introduced in \cite{HedenmalmWennman2021}. 

\begin{theorem} \label{thm:generalMain}
    Suppose that $\mathscr Q:\mathbb C^d\to\mathbb R$ decomposes as a sum of $[0,1]$-admissible planar potentials.
    Assume that the droplet $S_{\mathscr Q}$ has a smooth boundary. Then the following statements are true. 
    \begin{itemize}
    \item[(i)]
    For any $z_0\in\partial S_{\mathscr Q}$ denote by $\vec n(z_0)\
    \in\mathbb C^d$ the outward unit normal vector at $z_0$ on $\partial S_{\mathscr Q}$.
Then we have as $n\to\infty$ that
    \begin{multline*} 
        \frac1{\det n \partial\bar\partial\mathscr Q(z_0)} \mathscr K_n\left(z_0+\frac{\vec n(z_0)\xi}{\sqrt{n \partial\bar\partial\mathscr Q(z_0)}} ,z_0+\frac{\vec n(z_0) \eta}{\sqrt{n \partial\bar\partial\mathscr Q(z_0)}}\right)\\
        \equiv
        \left(1+\mathcal O\left(\frac{\log^3n}{\sqrt n}\right)\right)
        \frac12 \exp\left(\xi\overline\eta-\frac{|\xi|^2+|\eta|^2}2\right)
        \mathrm{erfc}\left(\frac{\xi+\overline\eta}{\sqrt 2}\right)
    \end{multline*}
    uniformly for $z_0\in\partial S_{\mathscr Q}$ and $\xi,\eta\in\mathbb C$ with $|\xi|,|\eta|=\mathcal O(\sqrt{\log n})$.
    \item[(ii)]
    For any $z_0\in\partial S_{\mathscr Q}$ there is a unitary matrix $\mathscr U(z_0)$ such that as $n\to\infty$
    \begin{multline*} 
        \frac1{\det n \partial\bar\partial\mathscr Q(z_0)} \mathscr K_n\left(z_0+\frac{\mathscr U(z_0)\xi}{\sqrt{n \det\partial\bar\partial\mathscr Q(z_0)}} ,z_0+\frac{\mathscr U(z_0) \eta}{\sqrt{n \det\partial\bar\partial\mathscr Q(z_0)}}\right)\\
        \equiv \left(1+\mathcal O\left(\frac{\log^3n}{\sqrt n}\right)\right)
        \frac12 \exp\left(\xi\cdot\eta-\frac{|\xi|^2+|\eta|^2}2\right)
        \mathrm{erfc}\left(\sum_{k=1}^d \frac{\xi_k+\overline{\eta_k}}{\sqrt {2d}}\right)
    \end{multline*}
    uniformly for $z_0\in\partial S_{\mathscr Q}$ and $\xi,\eta\in\mathbb C^d$ with $|\xi|,|\eta|=\mathcal O(\sqrt{\log n})$. 
    \end{itemize}    
\end{theorem}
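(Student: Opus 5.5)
The plan is to exploit the tensor structure. When $\mathscr Q(z)=\sum_k Q_k(z_k)$, an orthonormal basis for polynomials of total degree $<n$ consists of the products $\prod_{k=1}^d P^{(k)}_{j_k}(z_k)$ over multi-indices $j=(j_1,\dots,j_d)$ with $|j|=j_1+\dots+j_d<n$. Here $P^{(k)}_{j}$ are the planar orthonormal polynomials for the weight $e^{-nQ_k}$. Hence
\[
\mathscr K_n(z,w)=\sum_{|j|<n}\prod_{k=1}^d p^{(k)}_{j_k,n}(z_k)\overline{p^{(k)}_{j_k,n}(w_k)},\qquad p^{(k)}_{j,n}=P^{(k)}_j e^{-\frac n2 Q_k}.
\]
For each $k$ I use the family of planar droplets $S_{Q_k,\tau}$, $\tau\in[0,1]$. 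These are the droplets of $Q_k/\tau$, whose boundaries foliate a neighbourhood. For a point $z_k$ let $\tau_k(z_k)$ be the parameter with $z_k\in\partial S_{Q_k,\tau_k}$. The first step is to identify the droplet: $S_{\mathscr Q}=\{z:\sum_k\tau_k(z_k)\le 1\}$. For $z_0\in\partial S_{\mathscr Q}$ we then have $\sum_k\tau_k(z_{0,k})=1$, and the outward normal is proportional to $(\partial\bar\partial\mathscr Q)^{1/2}$ applied to the gradient of $\sum_k\tau_k$. This is the computation behind Proposition \ref{lem:unitVector}.

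The second step is to insert the uniform Hedenmalm--Wennman type asymptotics for $p^{(k)}_{j,n}$ with $j=n\tau$, which are available precisely under $[0,1]$-admissibility. In the rescaled variables $z_k=z_{0,k}+\xi_k/\sqrt{n\Delta Q_k(z_{0,k})}$, and for $|j_k-n\tau_k(z_{0,k})|\le C\sqrt{n\log n}$, they show that each summand is, up to co-cycles, a Gaussian in $j_k$:
\[
\exp\!\Big(-\frac{(j_k-n\tau_k-\sqrt n\,\beta_k(\xi_k+\overline{\eta_k}))^2}{2n\beta_k^2}\Big)
\]
with $\beta_k\propto|\nabla\tau_k|$, multiplied by the planar Ginibre factor. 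Indices outside this window contribute $\mathcal O(n^{-M})$ by the exponential decay of $|p^{(k)}_{j,n}|$ away from $\partial S_{Q_k,j/n}$. The $\log^3 n/\sqrt n$ error comes from the third-order Taylor terms with $|\xi|,|\eta|=\mathcal O(\sqrt{\log n})$ and $|j_k-n\tau_k|=\mathcal O(\sqrt{n\log n})$.

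The third step evaluates the lattice sum. I replace it by an integral via Euler--Maclaurin, which costs $\mathcal O(n^{-1/2})$ relatively. The summation over the region $|j|<n$ then becomes the probability that a sum of independent Gaussians lies below a threshold. Equivalently, it is the integral of a product of Gaussians over a half-space in $\mathbb R^d$. This gives
\[
\frac12\,\mathrm{erfc}\!\Big(\frac{\sum_k\beta_k(\xi_k+\overline{\eta_k})}{\sqrt{2\sum_k\beta_k^2}}\Big)
\]
times the product of Ginibre factors $\exp(\xi\cdot\eta-\frac{|\xi|^2+|\eta|^2}2)$.

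Both statements then follow by choice of coordinates. For (i), I restrict to displacements $\xi\vec n(z_0)$. The vector $(\beta_k)_k/|\beta|$ is exactly the rescaled normal, so the erfc argument collapses to $(\xi+\overline\eta)/\sqrt2$, and only one Ginibre factor survives nontrivially. For (ii), I take $\mathscr U(z_0)$ to be a unitary map sending $(1,\dots,1)/\sqrt d$ to the normal direction (after Hessian rescaling). The Ginibre factor is unitarily invariant, and the linear form becomes $\sum_k(\xi_k+\overline{\eta_k})/\sqrt{2d}$. The scalar normalization by $\sqrt{\det\partial\bar\partial\mathscr Q}$ versus the matrix normalization has to be reconciled within the choice of $\mathscr U$ together with the co-cycle, and I would check this carefully.

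The main obstacle is uniformity when some $\tau_k(z_{0,k})$ is $0$ or close to it. In that case the relevant $j_k$ range is $o(n)$, so the Hedenmalm--Wennman regime does not apply and $z_{0,k}$ is a bulk-degenerate point of that factor. For such components I would use the separate result on planar kernels with $o(n)$ terms announced in the abstract. It should show that the partial kernel $\sum_{j<m}p_{j,n}\overline{p_{j,n}}$ still has the Ginibre or erfc form, with $\beta_k\to0$, so that component drops out of the erfc argument consistently. I would also need the error bounds to stay uniform across the transition between the two regimes.
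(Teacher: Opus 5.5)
Your outline is sound, apart from the uniformity issue you flag yourself (last paragraph).

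\textbf{Diagonal part.} On the diagonal your plan is the paper's argument:
\begin{itemize}
\item the product basis over $|j|<n$;
\item Hedenmalm--Wennman Gaussians in a window around $n\tau_k$, with negligible contributions outside (Lemma~\ref{lem:estSumallTauBigger0});
\item Euler--Maclaurin to a half-space Gaussian integral (Lemma~\ref{lem:GaussianErfc});
\item Theorem~\ref{thm:PartialBerg} for coordinates with $\tau_k=0$.
\end{itemize}
Your $\beta$-weighted half-space is in fact the correct one. The mesh in coordinate $k$ is $1/(\sqrt n\beta_k)$ with $\beta_k=\sqrt{\Delta Q_k(z_{0,k})}/|\phi'_{k,\tau_k}(z_{0,k})|$. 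So the constraint $\sum_k j_k<n$ has normal $\beta/|\beta|$, not $(1,\dots,1)/\sqrt d$ as written in Corollary~\ref{cor:SumToInt}. For (ii) a further rotation absorbs this.

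\textbf{Off-diagonal part.} Here your route is genuinely different. In the factorized setting the paper computes no off-diagonal summands:
\begin{itemize}
\item it divides $\mathscr K_n$ by $\prod_k K^{\#}_{n,k}$ to get a Hermitian-analytic function;
\item it uses Cauchy--Schwarz and Vitali to get a normal family;
\item it identifies the limit from its diagonal values by analytic continuation;
\item it reads the Ginibre factor off the polarization $\mathscr Q(z,\bar w)$;
\item it deduces (i) from (ii) via Proposition~\ref{lem:unitVector}.
\end{itemize}
You sum polarized Gaussians directly, a route the paper mentions but avoids as more technical for $d>1$.

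The step you must justify is that no $j_k$-dependent phase survives outside your complex shift, since $\phi_{k,\tau}$ and $\mathcal Q_{k,\tau}$ move with $\tau=j_k/n$. This does hold. The exponent $n[\tau\log\phi_{k,\tau}(z)+\tau\overline{\log\phi_{k,\tau}(w)}+\frac12\mathcal Q_{k,\tau}(z)+\frac12\overline{\mathcal Q_{k,\tau}(w)}-Q_k(z,\bar w)]$ is holomorphic in $(z,\bar w)$ and real on the diagonal. Hence its quadratic Taylor polynomial in $(\tau-\tau_k,\,z-z_{0,k},\,\overline{w-z_{0,k}})$ is the polarization of the diagonal Hedenmalm--Wennman exponent. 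The cubic remainder gives your $\log^3 n/\sqrt n$.

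What your route buys:
\begin{itemize}
\item a quantitative off-diagonal error, whereas the normal-family argument gives convergence but no rate;
\item a direct proof of (i).
\end{itemize}
What it costs: the full holomorphic expansion, and Euler--Maclaurin with complex shifts. There the absolute sums carry $e^{\frac12(\mathrm{Im}(\xi_k-\eta_k))^2}$, offset only by the modulus of the Ginibre factor, so your error comes out additive. That is the only form the off-diagonal statement can take anyway, since $\mathrm{erfc}$ has complex zeros.

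\textbf{Loose ends you can resolve now.}

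\emph{Normalization in (ii).} The scalar normalization $\sqrt{n\det\partial\bar\partial\mathscr Q(z_0)}$ cannot be repaired by the choice of $\mathscr U$ and co-cycles. Matching the modulus of the Ginibre factor would force $\mathscr U^\dagger\partial\bar\partial\mathscr Q(z_0)\mathscr U=\det\partial\bar\partial\mathscr Q(z_0)\,\mathbb I$. The paper's proof works throughout with $(n\partial\bar\partial\mathscr Q(z_0))^{-1/2}$, which is also what the prefactor matches, so read (ii) that way.

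\emph{The normal.} In your first step the rescaled normal is $(\partial\bar\partial\mathscr Q)^{-1/2}\nabla\sum_k\tau_k$; the power is $-1/2$, not $+1/2$. The Hele--Shaw normal velocity $|\phi'_{k,\tau}|/(2\Delta Q_k)$ gives $\beta_k=|\nabla\tau_k|/(2\sqrt{\Delta Q_k})$. Consequently (i) holds with $\vec n(z_0)$ the unit normal in Hessian-rescaled coordinates, which is how you use it. With the Euclidean unit normal, the erfc argument acquires a Cauchy--Schwarz factor $<1$ unless the relevant $\Delta Q_k(z_{0,k})$ coincide. Proposition~\ref{lem:unitVector} mixes unscaled and scaled coordinates at exactly this point, so your direct derivation is preferable.

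\emph{Degenerate coordinates.} You need no erfc form of partial kernels here. Lemma~\ref{lem:bulkDegen} uses a sandwich. For the upper bound, complete the degenerate sums to the full planar kernel ($\approx1$). For the lower bound, truncate them and apply Theorem~\ref{thm:PartialBerg}. Truncate at $C\log n$: at a degenerate coordinate the weights $|P_{k,j}|^2e^{-nQ_k}/(n\Delta Q_k)$ are essentially Poisson with mean $|\xi_k|^2=\mathcal O(\log n)$. The threshold of the remaining sum then moves by $\mathcal O(\log n)$, i.e.\ the erfc argument moves by $\mathcal O(\log n/\sqrt n)$. A uniform shift of order $\sqrt n\log n$ or larger would not be negligible.

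\textbf{The genuine gap.} The obstacle you flag at the end is real, and the paper does not close it either. The paper justifies uniformity in $z_0$ by continuity of $\phi'_{k,\tau_k}(z_{0,k})$ and compactness of $\partial S_{\mathscr Q}$. But $|\phi'_{k,\tau}|\to\infty$ as $\tau\to0^+$, and the Hedenmalm--Wennman constants are controlled only for $\tau$ in compact subsets of $(0,1+\delta)$. So boundary points where some $\tau_k$ is small but positive are covered neither by Lemma~\ref{lem:estSumallTauBigger0} nor by Lemma~\ref{lem:bulkDegen}.

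One way to close it:
\begin{itemize}
\item Rescale $Q_k$ about $p_{Q_k}$ at scale $\sqrt{\tau_k}$. The planar asymptotics should then hold with effective parameter $n\tau_k$ and error $\mathcal O(\log^3(n\tau_k)/\sqrt{n\tau_k})$.
\item That coordinate enters the half-space condition only with weight $\beta_k\approx\sqrt{\tau_k}$. Comparing against the smooth distribution of the remaining coordinates then keeps the total error $\mathcal O(\log^3 n/\sqrt n)$.
\item When $n\tau_k\le\log^C n$, treat the coordinate as degenerate via the sandwich above.
\end{itemize}
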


We prove Theorem \ref{thm:generalMain} in Section \ref{sec:2}. 

In the case of rotational symmetric weights we have to put the following conditions. The condition for $z\to 0$ is to assure that the droplet is simply connected, i.e., the droplet is a ball centered at the origin.

\begin{theorem} \label{thm:generalMain2}
    Suppose that $\mathscr Q:\mathbb C^d\to\mathbb R$ is rotational symmetric, both $C^2$ and strictly plurisubharmonic on $\mathbb C^d\setminus\{0\}$, and assume that $z\cdot \partial \mathscr Q(z)\to 0$ as $z\to 0$.
Then the following statements are true. 
    \begin{itemize}
    \item[(i)]
    For any $z_0\in\partial S_{\mathscr Q}$ we have as $n\to\infty$ that
    \begin{multline*} 
        \frac1{\det n \partial\bar\partial\mathscr Q(z_0)} \mathscr K_n\left(z_0+\frac{\xi}{\sqrt{n \partial\bar\partial\mathscr Q(z_0)}} \frac{z_0}{|z_0|} ,z_0+\frac{\eta}{\sqrt{n \partial\bar\partial\mathscr Q(z_0)}} \frac{z_0}{|z_0|}\right)\\
        \equiv
        \left(1+\mathcal O\left(\frac{\log^3n}{\sqrt n}\right)\right)
        \frac12 \exp\left(\xi\overline\eta-\frac{|\xi|^2+|\eta|^2}2\right)
        \mathrm{erfc}\left(\frac{\xi+\overline\eta}{\sqrt 2}\right)
    \end{multline*}
    uniformly for $z_0\in\partial S_{\mathscr Q}$ and $\xi,\eta\in\mathbb C$ with $|\xi|,|\eta|=\mathcal O(\sqrt{\log n})$.
    \item[(ii)]
    For any $z_0\in\partial S_{\mathscr Q}$ there is a unitary matrix $\mathscr U(z_0)$ such that as $n\to\infty$
    \begin{multline*} 
        \frac1{\det n \partial\bar\partial\mathscr Q(z_0)} \mathscr K_n\left(z_0+\frac{\mathscr U(z_0)\xi}{\sqrt{n \det\partial\bar\partial\mathscr Q(z_0)}} ,z_0+\frac{\mathscr U(z_0) \eta}{\sqrt{n \det\partial\bar\partial\mathscr Q(z_0)}}\right)\\
        \equiv \left(1+\mathcal O\left(\frac{\log^3n}{\sqrt n}\right)\right)
        \frac12 \exp\left(\xi\cdot\eta-\frac{|\xi|^2+|\eta|^2}2\right)
        \mathrm{erfc}\left(\sum_{k=1}^d \frac{\xi_k+\overline{\eta_k}}{\sqrt {2d}}\right)
    \end{multline*}
    uniformly for $z_0\in\partial S_{\mathscr Q}$ and $\xi,\eta\in\mathbb C^d$ with $|\xi|,|\eta|=\mathcal O(\sqrt{\log n})$. 
    \end{itemize}    
\end{theorem}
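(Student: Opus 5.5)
The plan is to exploit the rotational symmetry to reduce the $d$-variable kernel to a one-variable sum, and then run a planar-type edge analysis on that sum.

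\textbf{Step 1: orthogonality of monomials.} Since $\mathscr Q(z)=V(|z|)$, the monomials $z^\alpha$ are mutually orthogonal with respect to $e^{-n\mathscr Q}\,d\omega$. Integrating in polar coordinates on $\mathbb C^d$ and using the standard sphere integral
\[
\int_{S^{2d-1}}|z^\alpha|^2\,d\sigma=\frac{(d-1)!\,\alpha!}{(|\alpha|+d-1)!},
\]
the squared norm of $z^\alpha$ factorizes. It equals $\alpha!/(|\alpha|+d-1)!$ times a radial moment
\[
m_j=c_d\int_0^\infty r^{2j+2d-1}e^{-nV(r)}\,dr,\qquad j=|\alpha|,
\]
where $c_d$ is a constant depending only on $d$.

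\textbf{Step 2: reduction to one variable.} Summing over $|\alpha|=j$ with the multinomial identity $\sum_{|\alpha|=j} z^\alpha\overline{w^\alpha}/\alpha!=(z\cdot w)^j/j!$ gives
\[
\boldsymbol k_n(z,w)=F_n(z\cdot w),\qquad F_n(t)=\sum_{j=0}^{n-1}\frac{(j+d-1)!}{j!\,m_j}\,t^j .
\]
This is exactly the structure of a planar radially symmetric kernel with weight $|\zeta|^{2(d-1)}e^{-nV(|\zeta|)}$, evaluated at $\zeta\bar\omega=z\cdot w$.

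\textbf{Step 3: edge asymptotics of $F_n$.} The droplet is the ball of radius $R$ determined by the condition $rV'(r)/2=1$ at $r=R$. The hypothesis $z\cdot\partial\mathscr Q(z)\to0$ as $z\to0$ ensures the droplet is a full ball rather than an annulus. I would apply Laplace's method to each $m_j$, with $j=n-k$ and $k=\mathcal O(\sqrt{n\log n})$. The saddle point $r_j$ solves $r_jV'(r_j)/2=j/n+\mathcal O(1/n)$.

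With this, the summand $\frac{(j+d-1)!}{j!\,m_j}\,|t|^j e^{-nV}$ becomes a Gaussian in $k/\sqrt n$ when $|t|=R^2(1+\mathcal O(\sqrt{\log n/n}))$. The truncation of the sum at $j=n-1$ then turns the Riemann sum of this Gaussian into $\tfrac12\,\mathrm{erfc}$, which is the standard mechanism of the Ginibre edge. Tails with $k\gg\sqrt{n\log n}$ are exponentially small, and the $\log^3 n/\sqrt n$ error comes from the cubic terms in the Taylor expansions over the window $|k|\lesssim\sqrt{n\log n}$.

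I must also track that $\partial\bar\partial\mathscr Q(z_0)$ has a radial eigenvalue and a $(d-1)$-fold tangential eigenvalue. I need to check which combination of these produces the scale in the statement, and that the weight factors $e^{-\frac n2(V(|z|)+V(|w|))}$ combine with $F_n$ into the Gaussian prefactor up to co-cycles.

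\textbf{Step 4: the two regimes.} For part (i), set $z=z_0+\xi u/\sqrt{n\,\partial\bar\partial\mathscr Q(z_0)}$ with $u=z_0/|z_0|$, which is an eigenvector of the radial eigenvalue. Then $z\cdot w$ reduces to a one-dimensional expression $(R+a\xi)(R+a\bar\eta)$, and Step 3 gives the planar erfc kernel.

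For part (ii), choose $\mathscr U(z_0)$ unitary with $\mathscr U(z_0)\,\tfrac{1}{\sqrt d}(1,\dots,1)^T=z_0/|z_0|$. Then $z_0\cdot\mathscr U(z_0)\eta=R\sum_k\overline{\eta_k}/\sqrt d$, so the linear part of $z\cdot w$ produces $\mathrm{erfc}\bigl(\sum_k(\xi_k+\overline{\eta_k})/\sqrt{2d}\bigr)$. The quadratic part $\mathscr U\xi\cdot\mathscr U\eta=\xi\cdot\eta$, combined with the weight factors, produces $\exp\bigl(\xi\cdot\eta-\tfrac{|\xi|^2+|\eta|^2}2\bigr)$ up to co-cycles.

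\textbf{Main obstacle.} I expect the hardest step to be Step 3 together with the bookkeeping of Step 4. The Laplace asymptotics of $m_j$ must be uniform in $j$ near $n$ and valid for $C^2$ $V$ only, and the anisotropic Hessian must be reconciled with the isotropic scaling $\sqrt{n\det\partial\bar\partial\mathscr Q(z_0)}$ used in (ii). I would first verify the whole computation for $\mathscr Q=|z|^2$, where $m_j$ is explicit, and then perturb.
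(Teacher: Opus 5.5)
Your route is the paper's: orthogonality of the monomials, the collapse $\boldsymbol k_n(z,w)=F_n(z\cdot w)$, a planar-type edge Gaussian for the one-variable sum that is summed into $\tfrac12\mathrm{erfc}$, and a unitary $\mathscr U(z_0)$ sending $\tfrac1{\sqrt d}(1,\ldots,1)$ to $z_0/|z_0|$. The step you left open in (ii) is a genuine gap. The isotropic scale $\sqrt{n\det\partial\bar\partial\mathscr Q(z_0)}$ cannot be made compatible with the Hessian.

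Normalize $|z_0|=1$ and $V'(1)=2$. Then $H:=\partial\bar\partial\mathscr Q(z_0)=\mathbb I+(\Delta Q(1)-1)z_0z_0^\dagger$ has tangential eigenvalue $1$ and radial eigenvalue $\Delta Q(1)=\det H$. Take $\eta=0$, take $\xi$ with $\sum_k\xi_k=0$ (equivalently $\mathscr U\xi\perp z_0$), and set $z=z_0+\mathscr U\xi/\sqrt{n\Delta Q(1)}$.
\begin{itemize}
\item Since $z\cdot z_0=1$ exactly, your Step 2 gives $\mathscr K_n(z,z_0)=e^{-\frac n2(V(|z|)-V(1))}\mathscr K_n(z_0,z_0)$.
\item Since $|z|^2=1+|\xi|^2/(n\Delta Q(1))$, this ratio tends to $e^{-|\xi|^2/(2\Delta Q(1))}$.
\item The right-hand side of (ii) predicts the ratio $e^{-|\xi|^2/2}$, because the erfc argument is $0$ in both places. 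Co-cycles cannot change moduli.
\end{itemize}
So your sentence that $\mathscr U\xi\cdot\mathscr U\eta=\xi\cdot\eta$, combined with the weights, yields $\exp\bigl(\xi\cdot\eta-\tfrac{|\xi|^2+|\eta|^2}{2}\bigr)$ is false at this scale unless $\Delta Q(1)=1$. Scaling by $\sqrt n$ instead repairs the tangential directions but multiplies the erfc argument by $\sqrt{\Delta Q(1)}$, so no isotropic scale works.

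The fix is the matrix scaling $z=z_0+(n\partial\bar\partial\mathscr Q(z_0))^{-1/2}\mathscr U(z_0)\xi$. This is what the paper's proof actually uses in Proposition~\ref{prop:rotInvU}, despite the $\det$ in the statement. Write $z=z_0+x$, $w=z_0+y$; the weights together with $F_n(z\cdot w)$ give, up to co-cycles, the Gaussian $\exp\bigl(n\,y^\dagger Hx-\tfrac n2x^\dagger Hx-\tfrac n2y^\dagger Hy\bigr)$.
\begin{itemize}
\item Because $z_0$ is an eigenvector of $H$, $z_0^\dagger H^{-1/2}\mathscr U\xi=\Delta Q(1)^{-1/2}\sum_k\xi_k/\sqrt d$. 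The linear part of $z\cdot w$ therefore gives exactly your erfc argument.
\item Because $\mathscr U^\dagger H^{-1/2}HH^{-1/2}\mathscr U=\mathbb I$, the quadratic form becomes $\xi\cdot\eta-\tfrac{|\xi|^2+|\eta|^2}2$.
\item This is also the scaling for which $1/\det(n\partial\bar\partial\mathscr Q(z_0))$ is the Jacobian of $\xi\mapsto z$.
\end{itemize}

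Three smaller points.
\begin{itemize}
\item $F_n$ is not exactly a planar kernel for the weight $|\zeta|^{2(d-1)}e^{-nV(|\zeta|)}$. Each term carries the extra factor $(j+d-1)!/j!$, which equals $n^{d-1}(1+\mathcal O(n^{-1/2}\log n))$ on the window $n-j=\mathcal O(\sqrt n\log n)$ and supplies the $n^{d-1}$ in $\det n\partial\bar\partial\mathscr Q(z_0)$. You keep this factor in Step 3, so this is only a wording issue.
\item Step 3 treats only $|t|$, but off the diagonal you must keep $t^j=|t|^je^{ij\arg t}$. With $j=n-k$, the phase $e^{-ik\arg t}$ is of order one on the window. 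It turns the Riemann sum into $\int_0^\infty e^{-\frac12(A+x)^2-iBx}\,dx=e^{iAB-\frac12B^2}\sqrt{\pi/2}\,\mathrm{erfc}\bigl((A+iB)/\sqrt2\bigr)$, where $A+iB\approx\sqrt{n\Delta Q(1)}\,(t-1)$. The factor $e^{iAB-\frac12B^2}$, together with $e^{in\arg t}$, converts $e^{nV(|t|^{1/2})}$ into the polarized weight, so the Gaussian prefactor comes out right up to co-cycles. This is how the paper proceeds in Corollary~\ref{cor:OutwardXiEta}; polarizing from the diagonal would also work.
\item The $C^2$ obstacle you flag is real. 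The cubic Taylor remainders that produce $\log^3 n/\sqrt n$ need a modulus of continuity for $V''$ near $r=1$ (e.g.\ $V\in C^3$ there). With bare $C^2$, Laplace's method gives only $o(1)$ for bounded $\xi,\eta$. The paper does not derive this step; it imports the planar asymptotics \eqref{eqtildePbehav}.
\end{itemize}
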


We prove Theorem \ref{thm:generalMain2} in Section \ref{sec:3}. In the rotational symmetric case we can also say something about counting statistics near the edge, which have a ``local flavour''. The interested reader may find an edge scaling limit for the variance of counting statistics in Section \ref{sec:3.2}, see Theorem \ref{thm:edgeScalingCounting}. We briefly comment on the proportionality between the number variance and the entanglement entropy in the specific case $\mathscr Q(z)=|z|^2$, which for $d=1$ was observed in \cite{LacroixMajumdarSchehr2019}.\\

One may wonder whether it is an accident that $\mathscr U(z_0)$ is a unitary matrix in Theorem \ref{thm:generalMain} and Theorem \ref{thm:generalMain2}. After all, both models (i) and (ii) exhibit a high level of symmetry. We can argue on a heuristic level that, from the viewpoint of probability theory, $\mathscr U(z_0)$ should at the very least be volume preserving. Then it has determinant $1$ and is thus invertible. Then we may equivalently write the scaling limit for $\xi=\eta$ in Conjecture \ref{con:2} as
 \begin{align*} 
        \lim_{n\to\infty} \frac1{\det n \partial\bar\partial\mathscr Q(z_0)} \mathscr K_n\left(z_0+\frac{\xi}{\sqrt{n \partial\bar\partial\mathscr Q(z_0)}}\right)
        \equiv
        \frac12 
        \mathrm{erfc}\left(\sqrt 2\mathrm{Re}(\xi \cdot \alpha(z_0))\right)
\end{align*}
for some nonzero vector $\alpha(z_0)\in\mathbb C^d$. We now prove that this vector must in fact be the outward unit normal vector $\vec n(z_0)$. (Although to argue that $\mathscr U(z_0)$ can be chosen to be unitary, we only need to show that $\alpha(z_0)$ has unit norm.) We will assume here that the convergence holds on a region where $\xi$ and $\eta$ are allowed to (mildly) grow with $n$, which is the case for $d=1$ (see, e.g., \cite{MarzoMolagOrtegaCerda2026, Charlier2025}) and there is no a priori reason to suspect that this does not hold also for $d>1$. 

\begin{proposition} \label{lem:unitVector}
Under the conditions of Conjecture 1, assume that there exists a nonzero vector $\alpha(z_0)\in \mathbb C^d$ such that 
\begin{align} \label{eq:GtoU}
        \frac1{\det n \partial\bar\partial\mathscr Q(z_0)} \mathscr K_n\left(z_0+\frac{\xi}{\sqrt{n \partial\bar\partial\mathscr Q(z_0)}}\right)
        =\frac12
        \mathrm{erfc}\left(\sqrt2 \mathrm{Re}(\xi\cdot\alpha(z_0))\right)\left(1+o(1)\right)
\end{align}
holds uniformly for $|\xi|=\mathcal O(\varepsilon_n)$, where $\varepsilon_n\to\infty$ as $n\to\infty$. Assume furthermore that \eqref{eq:con1} holds pointwise for $\xi=\eta$. Then $\alpha(z_0)=\vec n(z_0)$. 
\end{proposition}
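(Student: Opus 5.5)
The plan is to show first that the component of $\alpha(z_0)$ along $\vec n(z_0)$ equals $1$, using only the diagonal version of \eqref{eq:con1}. Then I show that the component orthogonal to $\vec n(z_0)$ vanishes, using the fact that \eqref{eq:GtoU} holds on a growing region $|\xi|=\mathcal O(\varepsilon_n)$.

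\emph{Step 1 (normal component).} Put $\xi=t\,\vec n(z_0)$ with $t\in\mathbb C$ fixed in \eqref{eq:GtoU}. Then the evaluation point is exactly the one appearing in \eqref{eq:con1} at $\xi=\eta=t$. On the diagonal the co-cycle drops out, so \eqref{eq:con1} gives the limit $\frac12\mathrm{erfc}(\sqrt2\,\mathrm{Re}\,t)$. Comparing with \eqref{eq:GtoU}, and using that $x\mapsto\mathrm{erfc}(x)$ is strictly decreasing on $\mathbb R$, we get
\[
\mathrm{Re}\,t=\mathrm{Re}\bigl(t\,c\bigr)\quad\text{for all } t\in\mathbb C,\qquad c=\vec n(z_0)\cdot\alpha(z_0).
\]
Taking $t=1$ and $t=i$ gives $c=1$. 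Hence $\alpha(z_0)=\vec n(z_0)+\beta$ with $\beta\cdot\vec n(z_0)=0$, and it remains to show $\beta=0$.

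\emph{Step 2 (tangential component).} Suppose $\beta\neq0$. Fix a large constant $C>0$ and choose
\[
\xi=\xi_n=-R_n\frac{\beta}{|\beta|}+C\,\vec n(z_0),
\]
where $R_n\to\infty$ slowly, with $R_n\le\varepsilon_n$ and $R_n=o(n^{1/4})$. We may shrink $\varepsilon_n$ freely, since \eqref{eq:GtoU} remains valid on a smaller region. Then $\mathrm{Re}(\xi_n\cdot\alpha)=C-R_n|\beta|\to-\infty$, so \eqref{eq:GtoU} forces the rescaled density at $z_n=z_0+\xi_n/\sqrt{n\partial\bar\partial\mathscr Q(z_0)}$ to tend to $1$.

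Geometrically, however, $z_n$ lies outside the droplet. The displacement $-R_n\beta/|\beta|$ is tangential in the rescaled coordinates, and since $\partial S_{\mathscr Q}$ is smooth it moves the point off $T_{z_0}\partial S_{\mathscr Q}$ by only $\mathcal O(R_n^2/n)$ in the normal direction. In rescaled units this is $\mathcal O(R_n^2/\sqrt n)=o(1)$. Therefore $z_n$ sits at rescaled outward normal distance at least $C/2$ from $S_{\mathscr Q}$ for large $n$.

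\emph{Step 3 (exterior decay; the main obstacle).} I need the rescaled density at such points to be small, uniformly in $n$ and exponentially in $C$. I would use the standard Berman-type pointwise bound
\[
\mathscr K_n(z)\le C' n^d e^{-n(\mathscr Q-\check{\mathscr Q})(z)}.
\]
This follows from the extremal property of $\mathscr K_n(z)$ together with a sub-mean-value estimate on a ball of radius $n^{-1/2}$ and the inequality $|f|^2e^{-n\mathscr Q}\le\|f\|^2e^{-n(\mathscr Q-\check{\mathscr Q})}$ for $f\in\mathcal W_n$. I combine it with quadratic growth $(\mathscr Q-\check{\mathscr Q})(z)\gtrsim\operatorname{dist}(z,S_{\mathscr Q})^2$ near the smooth boundary. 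This growth is where strict plurisubharmonicity and the $C^2$ hypothesis enter, and where I expect most of the work. If it proves hard to verify in general, the fallback is to get smallness from \eqref{eq:con1} itself: at scaled normal distance $C$ it gives $\frac12\mathrm{erfc}(\sqrt2C)$. One would then need an argument transporting this along the boundary from $z_0$ to the point nearest $z_n$, for instance a local-uniformity assumption in $z_0$.

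Either way, the rescaled density at $z_n$ is at most $e^{-cC^2}+o(1)$, which contradicts the limit $1$ obtained in Step 2 once $C$ is large. Hence $\beta=0$ and $\alpha(z_0)=\vec n(z_0)$.
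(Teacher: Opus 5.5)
Your Step 1 is correct, and it is sharper than the corresponding step in the paper. Comparing the diagonal of \eqref{eq:con1} with \eqref{eq:GtoU} at $\xi=t\,\vec n(z_0)$ gives $\vec n(z_0)\cdot\alpha(z_0)=1$ directly. The paper makes this comparison only at the end, to get $\pm|\alpha(z_0)|=1$ once the direction of $\alpha(z_0)$ is known.

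\textbf{The gap is Step 3, where you yourself locate it.} The contradiction needs $n(\mathscr Q-\check{\mathscr Q})(z_n)\geq cC^2$, with $c$ independent of $n$ and $C$, at exterior points $z_n$ that are tangentially displaced from $z_0$. That is a nondegenerate quadratic growth estimate for $\mathscr Q-\check{\mathscr Q}$ off the droplet, uniform near $z_0$.

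\begin{itemize}
\item Nothing in the hypotheses, combined with the one-sided bound $\mathscr K_n\lesssim n^d e^{-n(\mathscr Q-\check{\mathscr Q})}$ that you use, produces this estimate.
\item For $d>1$ it is not a standard fact. Off the droplet $\check{\mathscr Q}$ is only maximal, $(dd^c\check{\mathscr Q})^d=0$, not pluriharmonic. So the $d=1$ obstacle-problem argument, based on $\Delta(Q-\check Q)=\Delta Q>0$ outside $S_Q$, is unavailable.
\item What you would actually need is second-order regularity of $\check{\mathscr Q}$ up to $\partial S_{\mathscr Q}$ from outside, together with a nonvanishing normal second derivative.
\item Your fallback does not close the gap. \eqref{eq:con1} is assumed only pointwise at $z_0$, so it says nothing at base points that move with $n$.
\end{itemize}

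\textbf{How the paper avoids this.} Its argument is built so that no a priori exterior estimate is needed.
\begin{itemize}
\item Besides Berman's upper bound, it uses a matching lower bound, obtained by running the proof of \cite[Theorem~3.7]{Berman2009} for the rescaled potential. This makes $\frac1N\log\bigl(\mathscr K_N/\det n\partial\bar\partial\mathscr Q(z_0)\bigr)$ equal to $\check{\mathscr Q}-\mathscr Q$ up to $\mathcal O(1/N)$.
\item It evaluates this at a mismatched degree $N=n/t_n$ with $t_n\sim\varepsilon_n^{-2}$. This is where the growing window in \eqref{eq:GtoU} is spent: it pushes the argument of $\mathrm{erfc}$ into its Gaussian tail.
\item The result is an explicit, nondegenerate quadratic decay law for $n(\check{\mathscr Q}-\mathscr Q)$ at scale $n^{-1/2}$ around $z_0$, with steepest-descent direction $\alpha(z_0)/|\alpha(z_0)|$.
\item That direction is matched with $\vec n(z_0)$ through the one-sided rank-one second-order behaviour of $\check{\mathscr Q}-\mathscr Q$ at $z_0$, which the paper takes from the smoothness of $\partial S_{\mathscr Q}$. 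Then \eqref{eq:con1} fixes the normalization, which is your Step 1.
\end{itemize}
So the paper extracts from the hypothesis the nondegenerate decay of $\check{\mathscr Q}-\mathscr Q$ that your route has to import.

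In fact your route does not need the growing window at all. The bounded shift $\xi=-s\beta/|\beta|+C\vec n(z_0)$ with $s=2C/|\beta|$ already gives the limit $\frac12\mathrm{erfc}(-\sqrt2 C)>\frac12$ in \eqref{eq:GtoU}. What is missing is then exactly a nondegenerate one-sided second-order expansion of $\mathscr Q-\check{\mathscr Q}$ at $z_0$ itself.

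Finally, Step 2 tacitly reads $\vec n(z_0)$ as the unit normal in the rescaled coordinates. If $\vec n(z_0)$ is the Euclidean normal, then $\beta\perp\vec n(z_0)$ makes $(\partial\bar\partial\mathscr Q(z_0))^{-1/2}\beta$ tangent to $\partial S_{\mathscr Q}$ only when $\vec n(z_0)$ is an eigenvector of $\partial\bar\partial\mathscr Q(z_0)$. The statement leaves this point implicit as well.
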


\begin{proof}
For $\xi=\eta$ the co-cycles cancel one another, and we may replace the $\equiv$ symbol by the $=$ symbol in \eqref{eq:con1}.
Since $\partial S_{\mathscr Q}$ is assumed to be smooth, the outward unit normal vector $\vec n(z_0)$ is well-defined, and this means that the (real) Hessian of $R(z)=\check{\mathscr Q}(z)-\mathscr Q(z)$ is a rank $1$ matrix at $z_0$ (all first order derivatives of $R$ at $z_0$ are $0$ since it is $C^1$ and identically $0$ on the droplet). We then have that (with $\vec n(z_0)$ seen as in $\mathbb  R^{2d}$)
\[
\nabla^2R(z_0) = 4 \Delta R(z_0) \vec n(z_0) \vec n(z_0)^T. 
\]
In particular (with $\xi$ seen as in $\mathbb  R^{2d}$)
\begin{align} \label{eq:2ndOmostDecrease}
\lim_{n\to\infty} n R\Big(z+\frac{\xi}{\sqrt n}\Big) = -|4 \Delta R(z_0)||\vec n(z_0) \cdot \xi|^2.
\end{align}
So this expression is minimal under the constraint $|\xi|=1$ if and only if $\xi=\pm \vec n(z_0)$ (but the $+$ corresponds to the outside region).
By Berman \cite[Lemma 3.3]{Berman2009} we have
\begin{align*}
\log \frac1{\det n \partial\bar\partial \mathscr Q(z_0)} & \mathscr K_N\left(z_0+\frac{\xi}{\sqrt{n \partial\bar\partial \mathscr Q(z_0)}}\right)\\
&\leq N (\check {\mathscr Q}-\mathscr Q)\left(z_0+\frac{\xi}{\sqrt{n \partial\bar\partial \mathscr Q(z_0)}}\right)+C
\end{align*}
for some uniform constant $C>0$. To get a lower bound, we may follow the same argumentation as Berman \cite[Theorem 3.7]{Berman2009}, but with one important difference. We note that by the extremal property of the Bergman function
\begin{multline*}
\frac1{\det n \partial\bar\partial \mathscr Q(z_0)}\exp\left(N \mathscr Q\left(z_0+\frac{\xi}{\sqrt{n \partial\bar\partial \mathscr Q(z_0)}}\right)\right) 
\mathscr K_N\left(z_0+\frac{\xi}{\sqrt{n \partial\bar\partial \mathscr Q(z_0)}}\right)\\
= \max_{f_N\in \mathcal H_N\setminus\{0\}} \frac{\left|f_N(\xi)\right|^2}{\int_{\mathbb C^d} |f_N(z)|^2 e^{-N \mathscr Q_n(z)} d\omega(z)},
\end{multline*}
where $\mathcal H_N$ denotes the space of polynomials on $\mathbb C^d$ of degree $<N$, and
\[
\mathscr Q_n(\xi) = \mathscr Q\left(z_0+\frac{\xi}{\sqrt{n \partial\bar\partial \mathscr Q(z_0)}}\right).
\]
Then proceeding as Berman, following the exact same reasoning as in the proof of \cite[Theorem 3.7]{Berman2009} for the potential $\mathscr Q_n$ (rather than $\mathscr Q$), we also get a lower bound and we infer that uniformly 
\[
 (\check {\mathscr Q}-\mathscr Q)\left(z_0+\frac{\xi}{\sqrt{n \partial\bar\partial \mathscr Q(z_0)}}\right)
 = \frac1N \log \frac{\mathscr K_N\left(z_0+\frac{\xi}{\sqrt{n \partial\bar\partial \mathscr Q(z_0)}}\right)}{\det n \partial\bar\partial \mathscr Q(z_0)} 
 + \mathcal O(1/N)
\]
where the constant implied can be chosen independently from $N$ and $n$. Now let us denote $t_n=n/N$ and we will let $n,N\to\infty$ such that $t_n\sim 1/\varepsilon_n^2$ as $n\to\infty$. Under the assumptions of Conjecture \ref{con:1} we have
\[
\frac{\mathscr K_N\left(z_0+\frac{\xi}{\sqrt{n \partial\bar\partial \mathscr Q(z_0)}}\right)}{t_n^d\det n \partial\bar\partial \mathscr Q(z_0)} 
= \frac{1}2 \mathrm{erfc}(\sqrt 2 \mathrm{Re}(\xi \cdot \alpha(z_0)/\sqrt{t_n}))(1+o(1))
\]
as $n\to\infty$, uniformly for $\xi\in\mathbb C^d$ in compact sets. 
We infer that
\begin{align*}
 n (\check {\mathscr Q}-\mathscr Q)\left(z_0+\frac{\xi}{\sqrt{n \partial\bar\partial \mathscr Q(z_0)}}\right)
 = t_n \log\left(\mathrm{erfc}(\sqrt 2 \mathrm{Re}(\xi \cdot \alpha(z_0)/\sqrt{t_n}))\right)
 + o(t_n)
\end{align*}
as $n\to\infty$, uniformly for $\xi\in\mathbb C^d$ in compact sets. Using the asymptotic behavior of the erfc function for large arguments we get
\[
 \lim_{n\to\infty} n (\check {\mathscr Q}-\mathscr Q)\left(z_0+\frac{\xi}{\sqrt{n \det \partial\bar\partial \mathscr Q(z_0)}}\right)
 = - (\mathrm{Re}(\xi\cdot \alpha(z_0)))^2.
\]
This is minimal under the constraint $|\xi|=1$ if and only if $$\xi=\pm \frac{\alpha(z_0)}{|\alpha(z_0)|}$$ (by Cauchy-Schwarz applied on $\mathbb R^{2d}$). Comparing this with \eqref{eq:2ndOmostDecrease}, we infer that the outward unit normal vector is given by
\[
\vec n(z_0) = \pm \frac{\alpha(z_0)}{|\alpha(z_0)|}.
\]
Plugging this in \eqref{eq:con1}, and comparing with \eqref{eq:GtoU}, we infer that $\pm |\alpha(z_0)|=1$. 
\end{proof}

This result can be extended to $\xi\neq\eta$ by polarization, and this means that we can alternatively write the limiting behavior in Conjecture \ref{con:2} as
    \begin{multline*}
        \lim_{n\to\infty} \frac1{\det n \partial\bar\partial\mathscr Q(z_0)} \mathscr K_n\left(z_0+\frac{\xi}{\sqrt{n \partial\bar\partial\mathscr Q(z_0)}} ,z_0+\frac{\eta}{\sqrt{n \partial\bar\partial\mathscr Q(z_0)}}\right)\\
        \equiv
        \frac12 \exp\left(\xi\cdot\eta-\frac{|\xi|^2+|\eta|^2}2\right)
        \mathrm{erfc}\left(\frac{\xi\cdot \vec n(z_0)+\vec n(z_0)\cdot \eta}{\sqrt {2}}\right),
    \end{multline*}
although we prefer the universal, geometry-independent, form of the scaling limit in Conjecture \ref{con:2}. 
The interpretation of Proposition \ref{lem:unitVector}, as well as the idea behind its proof, is that the outward unit normal vector $\vec n(z_0)$ on the edge $\partial S_{\mathscr Q}$ at $z_0\in \partial S_{\mathscr Q}$ should correspond to the direction where the probability of finding points decreases the fastest.

Next, we prove a functional analytic result for the limiting kernel. The Bargmann-Fock space $\mathcal F(\mathbb C^d)$ is defined as the space of entire functions that are square-integrable with respect to the Gaussian measure, in other words
\[
\mathcal F(\mathbb C^d) = \left\{f:\mathbb C^d\to\mathbb C\text{ entire } : \|f\|_{\mathcal F}<\infty\right\},
\]
where we define the norm by
\[
\|f\|_{\mathcal F}^2 = \int_{\mathbb C^d} |f(z)|^2 e^{-|z|^2} d\omega(z).
\]
The inner product $\langle \cdot, \cdot \rangle_{\mathcal F}$ on $\mathcal F(\mathbb C^d)$ is induced by this norm. 
For any $\xi\in\mathbb C^d$ we denote $\xi^2=\xi_1^2+\ldots+\xi_d^2$. 
The multidimensional Bargmann transform \cite{Bargmann1961}, which we define explicitly as
\begin{align*}
\mathcal B[f](\xi) &= \frac1{(2\pi)^{d/4}}\int_{\mathbb R^d} f(x) e^{\xi\cdot x-\frac12\xi^2-\frac14|x|^2} dx_1\cdots dx_d,
\end{align*}
is known to act as a unitary operator from $L^2(\mathbb R^d)$ to $\mathcal F(\mathbb C^d)$.
The Hermitian-analytic part of our limiting kernel in \eqref{eq:con2} is reproducing on a specific subspace of $\mathcal F(\mathbb C^d)$. For $d=1$ the following result was proved in \cite{HaimiHedenmalm2013, AmeurKangMakarov2019}, and we generalize it to what we believe is the analogous statement for $d>1$. 

\begin{theorem} \label{thm:Functional}
Let $v\in\mathbb R^d$ be a fixed unit vector.
The kernel
\[
\frac12 \exp\left(\xi\cdot\eta\right)
        \mathrm{erfc}\left(\frac{\xi\cdot v+v\cdot \eta}{\sqrt {2}}\right)
\]
is the reproducing kernel on the subspace $\mathcal H\subset \mathcal F(\mathbb C^d)$ of functions $F$ satisfying
\[
\left|e^{\frac12\xi^2} F(\xi)\right|=\mathcal O(1) \quad \text{ uniformly for }\xi\in\mathbb C^d \text{ with }\mathrm{Re}\,\xi \in v \mathbb R_+.
\]
Furthermore, $\mathcal H$ is the isometric image of $L^2(\{x\in\mathbb R^d : x\cdot v\leq 0\})$ under the multidimensional Bargmann transform $\mathcal B$. 
\end{theorem}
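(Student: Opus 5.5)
The plan is to transport everything through the Bargmann transform $\mathcal B$, using that it is unitary from $L^2(\mathbb R^d)$ onto $\mathcal F(\mathbb C^d)$ with reproducing kernel $e^{\xi\cdot\eta}$. Set $H=\{x\in\mathbb R^d: x\cdot v\le 0\}$ and let $\Pi$ be the orthogonal projection of $L^2(\mathbb R^d)$ onto $L^2(H)$. The unitary $\mathcal B$ carries $L^2(H)$ isometrically onto the closed subspace $\mathcal B[L^2(H)]\subset\mathcal F(\mathbb C^d)$. The reproducing kernel of that subspace is $K(\xi,\eta)=\langle \mathcal B\Pi\mathcal B^{-1}k_\eta,k_\xi\rangle_{\mathcal F}$, where $k_\eta(\xi)=e^{\xi\cdot\eta}$. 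Since $\mathcal B^{-1}k_\eta$ is the conjugate of the integral kernel of $\mathcal B$, this gives
\[
K(\xi,\eta)=\frac{1}{(2\pi)^{d/2}}\int_{H} e^{\xi\cdot x-\frac12\xi^2-\frac14|x|^2}\,\overline{e^{\eta\cdot x-\frac12\eta^2-\frac14|x|^2}}\,dx .
\]
Here $\xi\cdot x=\sum\xi_kx_k$ for real $x$; I will fix the constants so that the full-space integral returns $e^{\xi\cdot\eta}$.

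The first step is to evaluate this Gaussian integral. Complete the square and rotate coordinates by an orthogonal matrix sending $v$ to $e_1$. The $d-1$ directions orthogonal to $v$ integrate over all of $\mathbb R$, and the $x_1$-direction runs over the half-line $(-\infty,0]$. One expects the result to be $e^{\xi\cdot\eta}$ times the one-dimensional factor $\frac12\mathrm{erfc}\bigl((\xi\cdot v+v\cdot\eta)/\sqrt2\bigr)$. Two checks fix the signs and constants: the full integral over $\mathbb R^d$ must give exactly $e^{\xi\cdot\eta}$, and the case $d=1$ must reproduce the known Haimi--Hedenmalm formula. This proves that the stated kernel is the reproducing kernel of $\mathcal B[L^2(H)]$.

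The second step, which I expect to be the main obstacle, is to show that $\mathcal B[L^2(H)]=\mathcal H$.

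For the inclusion $\subseteq$, take $f\in L^2(H)$ and write $\xi=a+ib$ with $a=tv$, $t\ge0$. Then
\[
e^{\frac12\xi^2}\mathcal B[f](\xi)=c\int_H f(x)e^{t\,v\cdot x+i b\cdot x-\frac14|x|^2}\,dx .
\]
On $H$ we have $t\,v\cdot x\le0$, so the integrand is bounded by $|f|e^{-|x|^2/4}$, and Cauchy--Schwarz gives a bound independent of $\xi$.

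For the inclusion $\supseteq$, take $F\in\mathcal H$ and let $f=\mathcal B^{-1}F\in L^2(\mathbb R^d)$; we must show $f=0$ a.e. on $\{x\cdot v>0\}$. The idea is to use Paley--Wiener in the $v$-direction. Write $g(x)=f(x)e^{-|x|^2/4}$. Then
\[
G(\zeta)=e^{\frac12\zeta^2}F(\zeta),\qquad \zeta=tv+ib,
\]
is, up to constants, the Fourier--Laplace transform $\int g(x)e^{\zeta\cdot x}\,dx$. Fix the components of $b$ orthogonal to $v$ and let $s=b\cdot v$ vary. As a function of the single complex variable $w=t+is$, $G$ is entire. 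It is bounded on the half-plane $t\ge0$ by the hypothesis, and it grows like $e^{|w|^2/2}$ on the other side. To invoke one-variable Paley--Wiener, I would first average in the orthogonal directions against test functions. This produces a function of $w$ that is bounded on $\mathrm{Re}\,w\ge0$ and is the Laplace transform of a partial Fourier transform of $g$ in the $v$-variable. That partial transform is integrable, since $g$ has Gaussian decay.

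Paley--Wiener for the right half-plane then forces the one-dimensional function to be supported in $\{x\cdot v\le0\}$. The delicate point is the growth at infinity: boundedness holds only on the closed half-plane, not in $L^2$. I would first try dividing by $(1+w)^2$, which makes the function square-integrable on vertical lines and does not change the support conclusion. If that fails, I would fall back on a Phragmén--Lindelöf argument combined with the inversion formula, shifting the contour to $t\to+\infty$. Since all orthogonal test functions were arbitrary, $f$ vanishes on $\{x\cdot v>0\}$.

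Finally, $\mathcal H$ is then a closed subspace, being the isometric image of a closed subspace, and the kernel from the first step reproduces on it.
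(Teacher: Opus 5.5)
Your proposal is correct. For the kernel and for $\mathcal B[L^2(H)]\subseteq\mathcal H$ it follows the paper in substance: pull back through the unitary $\mathcal B$, evaluate a half-space Gaussian integral, and use Cauchy--Schwarz. Two of your steps are simpler than the paper's. Rotating $v$ to the first coordinate vector reduces the kernel to a single half-line integral,
\[
e^{\xi\cdot\eta}\,(2\pi)^{-1/2}\int_{-\infty}^0 e^{-\frac12(s-c)^2}\,ds=\frac12 e^{\xi\cdot\eta}\,\mathrm{erfc}\bigl(c/\sqrt2\bigr),\qquad c=\xi\cdot v+v\cdot\eta,
\]
which is more direct than the paper's use of the general identity in Lemma~\ref{lem:GaussianErfc}. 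Your bound $e^{t\,v\cdot x}\le 1$ on $H$ is also simpler than the paper's erfc bound.

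The real difference is the inclusion $\mathcal H\subseteq\mathcal B[L^2(H)]$. The paper invokes a multivariable Paley--Wiener theorem for $H^\infty$ of the tube over the cone $\Gamma=v\mathbb R_+$. You instead pair $e^{\frac12\zeta^2}F(\zeta)$ with test functions in the directions orthogonal to $v$ and apply one-variable Paley--Wiener in $w=t+is$. Your route is more elementary and better matched to the problem, since the half-space condition is intrinsically one-dimensional. It also avoids a difficulty: for $d>1$ the ray $v\mathbb R_+$ has empty interior, so the tube over it is not open in $\mathbb C^d$, and a Hardy-space theorem for tubes over open cones does not apply verbatim.

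Two details need fixing when you write it up. First, the function you obtain is the Laplace transform in the $v$-variable of $g$ paired with a test function in the orthogonal variables. It is not a Fourier transform in the $v$-variable, as you wrote. Second, the claim that dividing by $(1+w)^2$ preserves the support conclusion needs one line of justification:
\begin{itemize}
\item $(1+w)^{-2}$ is the Laplace transform of $k(y)=|y|e^{y}\mathfrak{1}_{y<0}$;
\item Paley--Wiener then gives $h*k=0$ on $\{y>0\}$;
\item since $(1-\frac{d}{dy})^2k=\delta$, you recover $h=(1-\frac{d}{dy})^2(h*k)$, which is supported in $\{y\le 0\}$.
\end{itemize}
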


\begin{proof}
Without loss of generality, by rotating coordinates, we set $v= \frac1{\sqrt d} (1, \ldots, 1)$. For the second part, we follow an argument similar to \cite{HaimiHedenmalm2013}. Let $M(\xi, \eta)=M_\eta(\xi)$ be the reproducing kernel for the space $\mathcal B[L^2(\{x\in\mathbb R^d : x\cdot v\leq 0\})]$. Then $M_\eta\in \mathcal B[L^2(\{x\in\mathbb R^d : x\cdot v\leq 0\})]$ and for any $F\in \mathcal B[L^2(\{x\in\mathbb R^d : x\cdot v\leq 0\})]$ 
\begin{align*}
\mathcal B[F](\eta) &= \langle \mathcal B[F], M_\eta\rangle_{\mathcal F}
= \langle F, \mathcal B^{-1}[M_\eta]\rangle_{L^2(\mathbb R^d)},
\end{align*}
where we used that the Bargmann transform is a unitary operator. Since $F$ was arbitrary, it follows using the definition of $\mathcal B$ that
\begin{align*}
\mathcal B^{-1}[M_\eta](x) = \frac1{(2\pi)^{d/4}}\mathfrak{1}_{x\cdot v\leq 0}(x) e^{\overline\eta\cdot x-\frac12 \overline \eta^2-\frac14 |x|^2}. 
\end{align*}
Inverting this equation, we get
\begin{align*}
M_\eta(\xi) &= \mathcal B[x\mapsto \frac1{(2\pi)^{d/4}}\mathfrak{1}_{x\cdot v\geq 0}(x) e^{\overline\eta\cdot x-\frac12 \overline \eta^2-\frac14 |x|^2}](\xi)\\
&= \frac1{(2\pi)^{d/2}} e^{\xi\cdot\eta} \int_{\mathbb R^{d-1}} \int_{-\infty}^{-\sum_{k=2}^{d} x_k} e^{-\frac12\sum_{k=1}^d (x_k-\xi_k-\overline\eta_k)^2} dx_1\cdots dx_d\\
&= \frac12 e^{\xi\cdot \eta} \mathrm{erfc}\left(\frac{\xi\cdot v+v\cdot\eta}{\sqrt 2}\right),
\end{align*}
where the last step follows from Lemma \ref{lem:GaussianErfc} in Appendix \ref{sec:A}. Hence our kernel is indeed the reproducing kernel on $\mathcal B(L^2(\{x\in\mathbb R^d : x\cdot v\leq 0\}))$.

We shall now prove that $\mathcal B(L^2(\{x\in\mathbb R^d : x\cdot v\leq 0\}))=\mathcal H$. Suppose that $F=\mathcal B[f]$, where $f\in L^2(\{x\in\mathbb R^d : x\cdot v\leq 0\})$. 
Then, again using Lemma \ref{lem:GaussianErfc}, combined with Cauchy-Schwarz
\begin{align*}
|F(\xi) e^{\frac12\xi^2}| &= \left|\frac1{(2\pi)^{d/4}} \int_{x\cdot v\leq 0} f(x) e^{\xi\cdot x-\frac14|x|^2} dx_1\cdots dx_d\right|\\
&\leq \frac{1}{\sqrt 2} \|f\|_{L^2(\mathbb R^d)} e^{|\mathrm{Re} \, \xi|^2} \left|\mathrm{erfc}\left(\mathrm{Re}(\xi) \cdot v\right)\right|, 
\end{align*}
which is uniformly bounded for $\mathrm{Re}\,\xi\in v \mathbb R_+$. Now let us assume that $F\in\mathcal H$. We shall prove that $F\in \mathcal B(L^2(\{x\in\mathbb R^d : x\cdot v\leq 0\}))$. 
We can write
\begin{align*}
G(\xi) := e^{-\frac12\overline \xi^2} F(-i\overline \xi) &= \frac1{(2\pi)^{d/4}}\int_{\mathbb R^d} e^{-i \overline \xi\cdot x} f(x) e^{-\frac14|x|^2} dx_1\cdots dx_d,
\end{align*}
where $f=\mathcal B^{-1} F\in L^2(\mathbb R^d)$. We have to argue that $f$ has support in the set of $x\in\mathbb R^d$ such that $x\cdot v\leq 0$, which we shall prove with a Paley-Wiener type theorem from \cite{LiDeQi2018}. 
We denote the cone $\Gamma=v \mathbb R_+$ and the tube $T_\Gamma=\{\xi\in\mathbb C^d : \mathrm{Im} \, \xi \in \Gamma\}$. The conditions on $F$ then imply that $H(\xi) := e^{-\frac12|\xi|^2} G(\xi)$ is in the Hardy space $H^\infty(T_\Gamma)$. Then \cite[Theorem 2.2]{LiDeQi2018} implies that the Fourier transform $\hat H$ has support in the dual cone $\Gamma^*=\{x\in \mathbb R^d : x\cdot y\geq 0 \,\, \forall y\in \Gamma\}=\{x\in \mathbb R^d : x\cdot v\geq 0\}$. That $F\in \mathcal B(L^2(\{x\in\mathbb R^d : x\cdot v\leq 0\}))$ now follows from the identity
\begin{align*}
\hat H(-x) &= \frac1{(2\pi)^{d/4}}  \int_{\mathbb C^d} e^{i \xi \cdot x} H(\xi) d\omega(\xi)\\
&= \frac1{(2\pi)^{d/4}} \int_{\mathbb C^d} e^{\overline\xi\cdot x} e^{-\frac12\overline\xi^2-\frac12|x|^2-\frac12|\xi|^2} F(\xi) d\omega(\xi)
= (\mathcal B^{-1} F)(x).
\end{align*}
\end{proof}

Analogous statements can be extracted from Theorem \ref{thm:Functional} when $|v|\neq 1$ or $v\in\mathbb C^d$, by applying suitable coordinate transformations. As for the $d=1$ case, for $d>1$ one may also interpret our model as a quantum Hall fluid (with the wave function constructed with the Slater determinant of our weighted polynomials). Theorem \ref{thm:Functional} implies that the Lowest Landau Levels that describe the edge correspond precisely to those functions in the Bargmann-Fock space that may have support near the edge inside the fluid (the droplet), but quickly decay in the outward normal direction.

Finally, we note that for $d>1$ there is an interesting feature where regular edge points $z_0\in\partial S_{\mathscr Q}$ may exhibit a certain type of bulk degeneracy: one or more coordinates of $z_0$ could arise as a limiting bulk point. This is perhaps best illustrated by the pluripotential version of the Ginibre ensemble, 
\[
\mathscr Q(z)=|z|^2=Q_1(z_1)+\ldots+Q_d(z_d)
\] 
with planar potentials $Q_k(z)=|z|^2$. Then $\partial S_{\mathscr Q}$ is the unit sphere in $\mathbb C^d$ which contains a point such as $z_0=(\zeta_0,0)$ where $\zeta_0$ lies on the unit sphere in $\mathbb C^{d-1}$. Then the last coordinate is, in a sense, the deepest point in the bulk, the unit disk, associated to the potential of the last coordinate $z_d=0$, where $Q_d$ attains its minimum. We will explain this situation in greater generality in Section \ref{sec:2}. As it turns out, to prove the edge scaling limits, this requires us to understand the planar kernels for each such coordinates where the number of terms in the sum defining the kernel is not $n$, but grows slower than $n$. To the best of our knowledge, this setting has not yet been explored in the literature. Since we believe this result, as well as our method of proof, is of independent interest, we state it here in the current section. 

\begin{theorem} \label{thm:PartialBerg}
Let $Q:\mathbb C\to\mathbb R$ be a real-analytic function satisfying \eqref{eq:growthCond}, with a unique minimum at $z=0$. Assume that $m_n$ is a sequence of natural numbers converging to $\infty$. Then there exists a constant $r_Q>0$ such that
\[
\frac{e^{-n Q\big(z/\sqrt{n \Delta Q(0)}\big)}}{n\Delta Q(0)}  \sum_{j=0}^{m_n} \left|P_j\left(\frac{z}{\sqrt{n\Delta Q(0)}}\right)\right|^2 = 1+\mathcal O\left(\frac{m_n}{n}\right)
\]
as $n\to\infty$, uniformly for all $|z|\leq r_Q \sqrt{m_n}$. If we also have $m_n=o(n^{2/3})$, then
\begin{align*}
\lim_{n\to\infty} &\frac{e^{-\frac12n Q\big(\frac{\sqrt{m_n} z_0+\xi}{\sqrt{n\Delta Q(0)}}\big)} e^{-\frac12n Q\big(\frac{\sqrt{m_n} z_0+\eta}{\sqrt{n\Delta Q(0)}}\big)}}{n\Delta Q(0)}\\
&\qquad\sum_{j=0}^{m_n} P_j\left(\frac{\sqrt{m_n} z_0+\xi}{\sqrt{n\Delta Q(0)}}\right) 
\overline{P_j\left(\frac{\sqrt{m_n} z_0+\eta}{\sqrt{n\Delta Q(0)}}\right)}\equiv e^{\xi \overline \eta-\frac12(|\xi|^2+|\eta|^2)}
\end{align*}
as $n\to\infty$, uniformly for $|z_0|\leq r_Q$ and $\xi,\eta\in\mathbb C$ in compact sets.
\end{theorem}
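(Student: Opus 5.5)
We plan to compare the partial kernel with the kernel of a localized problem near the minimum $z=0$, where $Q$ looks like $Q(0)+\Delta Q(0)|z|^2+\mathrm{Re}(a z^2)+\ldots$. After the rescaling $z\mapsto z/\sqrt{n\Delta Q(0)}$, the weight becomes $e^{-|\zeta|^2-\mathrm{Re}(a'\zeta^2)-n\,\mathcal O(|\zeta|^3/n^{3/2})}$, up to a constant. Polynomials of degree $\le m_n$ live, in the sense of mass concentration, on the disk $|\zeta|\lesssim \sqrt{m_n}$. On that disk the cubic remainder is $\mathcal O(m_n^{3/2}/\sqrt n)$ in the exponent.

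This exponent error is too large unless $m_n\ll n^{1/3}$. We therefore do not treat $Q$ as a perturbation of a Gaussian. Instead we use the $d=1$ theory of Hedenmalm--Wennman (orthogonal polynomials for $\tau$-admissible potentials, which we allow ourselves to extend to $\tau=j/n\to0$), or more concretely a Bergman-kernel/$\bar\partial$ argument, as follows.

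\textbf{Step 1 (upper bound on the diagonal).} We work with the weight $\phi=nQ$ on the disk $D_n=\{|z|\le 2r\sqrt{m_n/n}\}$. Since $Q$ is real-analytic and strictly subharmonic near $0$ ($\Delta Q(0)>0$ because $0$ is a nondegenerate minimum; we assume this nondegeneracy implicitly), we write $nQ=n\,\mathrm{Re}\,h+n\Psi$ locally, with $h$ holomorphic and $\Psi$ having Hessian exactly $\Delta Q(0)|z|^2$ plus real-analytic corrections. The submean value inequality applied to $|p|^2e^{-nQ}$ on disks of radius $\sim n^{-1/2}$ then gives
\[
e^{-nQ}\sum_{j\le m_n}|P_j|^2\le n\Delta Q\,(1+\mathcal O(n^{-1/2}))
\]
at points where $\Delta Q$ is evaluated locally. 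Moreover $\Delta Q(z)=\Delta Q(0)(1+\mathcal O(|z|))$, which is $1+\mathcal O(\sqrt{m_n/n})$ on $D_n$. This error is already weaker than the claimed $\mathcal O(m_n/n)$. To get $\mathcal O(m_n/n)$ we use that the linear term in $\Delta Q$ cancels: after conjugating by $e^{n h}$ the local Bergman kernel expansion (Tian--Zelditch type) has first correction proportional to $\Delta\log\Delta Q/n$, while odd terms vanish by the reproducing identity.

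\textbf{Step 2 (lower bound).} This is the main obstacle, since it is where the truncation at degree $m_n$ matters. For a point $z$ with $|z|\le r_Q\sqrt{m_n}$ in rescaled units, we construct a peak section. We take the local holomorphic function $f_z(w)=e^{n\,\partial_z\text{-polarization of }Q(z,w)}$ times a cutoff $\chi$ supported on $|w|\le r\sqrt{m_n/n}$. We solve $\bar\partial u=f_z\bar\partial\chi$ with Hörmander's $L^2$ estimate in the weight $nQ+ \log$-type correction. We then need the corrected function to be a polynomial of degree $\le m_n$. This is achieved by Taylor-truncating $f_z$ at degree $m_n$: on $|w|\le r\sqrt{m_n/n}$ the tail $\sum_{k>m_n}$ of $e^{n\bar c w}$-type series is bounded by $(e\,n|\bar c||w|/m_n)^{m_n}$. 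Here $n|c||w|\lesssim r_Q r m_n$, so for $r_Q$ small this is geometrically small, of size $e^{-cm_n}$, uniformly. The $\bar\partial$-correction is exponentially small because $\bar\partial\chi$ lives where $Q$ exceeds its polarization by $\gtrsim m_n/n$. Polynomial approximation, instead of Hörmander on all of $\mathbb C$, then gives $|p(z)|^2e^{-nQ(z)}/\|p\|^2\ge n\Delta Q(0)(1-\mathcal O(m_n/n)-e^{-cm_n})$. The choice of $r_Q$ (in terms of the radius of analyticity of $Q$ and $\Delta Q(0)$) is exactly what makes the truncation harmless. The $e^{-cm_n}$ term needs absorbing, which holds as long as $m_n\gg \log n$. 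For slower $m_n$ we fall back to the Gaussian comparison, which is valid since then $m_n\ll n^{1/3}$.

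\textbf{Step 3 (off-diagonal).} For $m_n=o(n^{2/3})$ we polarize. The partial kernel $K(z,w)$ is Hermitian-analytic. After multiplying by the co-cycle $e^{-n\,\mathrm{Re}\,h}$ and subtracting the polarized quadratic, the normalized kernel $F_n(\xi,\eta)$ is holomorphic in $\xi$, antiholomorphic in $\eta$, and by Step 1 and Cauchy--Schwarz locally bounded. On the diagonal it converges to $e^{|\xi|^2}$ times the Gaussian weight. The condition $m_n=o(n^{2/3})$ ensures that the third-order Taylor terms of $nQ$ around the base point $\sqrt{m_n}z_0/\sqrt{n\Delta Q(0)}$ are $\mathcal O(n\cdot(m_n/n)\cdot n^{-1/2})\cdot$ (a bounded displacement), i.e.\ $o(1)$. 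More precisely, the second-order expansion of $nQ$ at the shifted base point only differs from $|\xi|^2$ by holomorphic/antiholomorphic terms (absorbed into co-cycles) plus $\mathcal O(\sqrt{m_n/n})\,|\xi|^2$, which tends to $0$. A Hermitian-analytic function is determined by its diagonal, and locally uniform diagonal convergence upgrades to off-diagonal convergence by the standard normal-families/polarization argument (Vitali). This yields $e^{\xi\bar\eta-\frac12(|\xi|^2+|\eta|^2)}$ up to co-cycles.
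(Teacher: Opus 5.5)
\textbf{Relation to the paper.} Your plan (sharp sub-mean upper bound, $\bar\partial$/peak-polynomial lower bound, polarization via Vitali) is the H\"ormander-type route that the paper only mentions as an alternative. The paper instead bounds the off-diagonal entries of the monomial Gram matrix (Lemma~\ref{lem:IjkEst}), solves the nearly diagonal extremal problem \eqref{eq:partialKerSup} by Lagrange multipliers (Proposition~\ref{prop:planarKernelApprox}), and compares with truncated exponential series. Only your final polarization step is shared with the paper.

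\textbf{The gap: the rate $\mathcal O(m_n/n)$ in Steps 1--2.} The ``cancellation of the linear term in $\Delta Q$'' does not exist. The leading Tian--Zelditch term is $n\Delta Q(w)$ at the point $w$ itself, and the structure of the $1/n$-expansion says nothing about the Taylor expansion of $\Delta Q$ in $w$. In fact the target is unattainable. Use the sharp form of your Step 1: integrate $|p(w+u)e^{-nh_w(u)}|^2e^{-n\Delta Q(w)|u|^2}$ over $|u|\le \log n/\sqrt n$, where $h_w$ is the holomorphic part of the quadratic Taylor polynomial of $Q$ at $w$. This shows that the left-hand side of the first display is at most $\frac{\Delta Q(w)}{\Delta Q(0)}(1+\mathcal O(n^{-1/2}\log^3 n))$, with $w=z/\sqrt{n\Delta Q(0)}$. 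Now take $Q(z)=|z|^2+\epsilon\,\mathrm{Re}(z)|z|^2e^{-|z|^2}$ with small $\epsilon>0$, so that $\Delta Q(w)=1+2\epsilon\,\mathrm{Re}\,w+\mathcal O(|w|^3)$, and let $\log^6 n\ll m_n\ll n$. At $z=-r_Q\sqrt{m_n}$ the quantity is then below $1-\epsilon r_Q\sqrt{m_n/n}$, which contradicts $1+\mathcal O(m_n/n)$.

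Your $e^{-cm_n}$ term is not an artifact either. For $Q=|z|^2$ the quantity is exactly $e^{-|z|^2}\sum_{j\le m_n}|z|^{2j}/j!=\mathbb P(\mathrm{Poisson}(|z|^2)\le m_n)$. Its deficit at $|z|=r_Q\sqrt{m_n}$ is $e^{-c(r_Q)m_n}$ up to polynomial factors. So for $m_n=o(\log n)$ no fallback can give $\mathcal O(m_n/n)$, and your Gaussian-comparison error $m_n^{3/2}/\sqrt n$ is not $\mathcal O(m_n/n)$ anyway.

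What you can hope to prove is the limit statement, with an error of order $\sqrt{m_n/n}+e^{-cm_n}+n^{-1/2}\log^3 n$. This is also all the paper's proof asserts at the end ($\lim=1$ uniformly on the disk). Its use in Lemma~\ref{lem:bulkDegen} only involves $|\xi|=\mathcal O(\sqrt{\log n})$, where $\Delta Q$ varies by only $\mathcal O(\sqrt{\log n/n})$.

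\textbf{Step 2 is incomplete even for the limit.} Taylor-truncating $f_z$ controls $p-f_z$ only on the disk where you apply Cauchy estimates, whereas $\|p\|_{nQ}$ integrates over all of $\mathbb C$. You give no bound on $\int_{|w|>r\sqrt{m_n/n}}|p|^2e^{-nQ}\,dA$, and crude coefficient bounds do not provide one. Already for $Q=|w|^2$, the triangle inequality overshoots the true norm by a factor of order $\sqrt{m_n}$ for $z$ near the rim. The standard repair is to solve $\bar\partial u=f_z\bar\partial\chi$ in the weight $n\check Q_\tau+\log(1+|w|^2)$ with $\tau n=m_n+1$. Then:
the logarithmic growth forces $\chi f_z-u$ to be a polynomial of degree $\le m_n$;
$\check Q_\tau\le Q$ transfers the norm bound to $\|\cdot\|_{nQ}$;
$\chi$ can be supported in the $\tau$-droplet, where $\check Q_\tau=Q$ is strictly subharmonic (for a nondegenerate minimum this droplet contains a disk of radius $\asymp\sqrt{m_n/n}$).
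With this choice the cut-off errors are exponentially small in $m_n$.

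\textbf{Step 3.} Your first justification is off: $n\cdot\frac{m_n}{n}\cdot n^{-1/2}=m_n/\sqrt n$ is $o(1)$ only if $m_n=o(\sqrt n)$, not $o(n^{2/3})$. The ``more precisely'' version is correct, though, and works even for $m_n=o(n)$: expand $nQ$ at the shifted base point, absorb holomorphic and antiholomorphic terms into co-cycles, and the remainder is $\mathcal O(\sqrt{m_n/n}+n^{-1/2})$ on compacts.
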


One way to prove such results, as we have been able to verify, is with a well-known approach involving Hörmander's $\bar\partial$-method \cite{Hormander1965}. However, we devise a method that eventually allows one to approximate the kernel using the Lagrange multiplier method. In particular, our method gives an approximation uniformly on $\mathbb C$, see Proposition \ref{prop:planarKernelApprox}, while Hörmander's $\bar\partial$-method typically yields approximations locally. 
With some effort, one can reduce the regularity conditions to $Q$ being $C^4$. This can be proved by Taylor expanding $Q$ and neglecting terms beyond fourth order. 

\subsection*{Outlook}

Finally, we comment on how our results may be extended. To fully prove Conjecture \ref{con:1} and Conjecture \ref{con:2}, one probably has to invent a new method. For $d=1$, there are essentially three general approaches. For $d=1$ the local edge universality was first proved by Hedenmalm and Wennman in \cite{HedenmalmWennman2021} using approximately
orthogonal quasipolynomials, constructed using an orthogonal foliation flow. Later, Hedenmalm published a related approach, using so-called soft Riemann-Hilbert problems \cite{Hedenmalm2024}, starting from a viewpoint first set out by Its and Takhtajan \cite{ItsTakhtajan2007}. Then there is also the recent paper by Wennman and Cronvall \cite{CronvallWennman2025}. The method starts with the extremal property of the Bergman kernel (on the diagonal) on the space $\mathcal H$ in Theorem \ref{thm:Functional} above for $d=1$. Then  they construct peak polynomials to get a lower bound for the rescaled polynomial Bergman kernel. All three approaches seem to suffer from the same drawback for $d>1$, namely that they rely heavily on the fact that there is a conformal map from the exterior of the droplet to the exterior of the closed unit disk, and this is used in a nonlocal way. For $d>1$ such a map does not exist in general. One other obstruction is that for $d>1$, contrary to the $d=1$ case, the obstacle function $\check{\mathscr Q}$ is in general not harmonic outside the droplet, an ingredient that is used in the construction of the peak polynomials in \cite{CronvallWennman2025}. Nevertheless, the approach in \cite{CronvallWennman2025} appears robust, and armed with our Theorem \ref{thm:Functional} there is some hope that one may prove Conjecture \ref{con:1} and Conjecture \ref{con:2}. 

In a different direction, there are also more exotic settings to be explored. For example, one may consider situations with a \textit{hard edge}, where the value of $\mathscr Q$ suddenly becomes $+\infty$ and particles are excluded from a certain region (the setting of the current paper, where the edge is determined by a smoothly behaved potential, is that of a \textit{soft edge}). For $d=1$, this was considered in \cite{AmeurKangMakarov2019, Seo2022, AmeurCharlierCronvall2024} and finally proved in generality in \cite{CronvallWennman2025}. Another interesting setting is that of \textit{singular boundary points}, for $d=1$ considered, e.g., in \cite{AmeurKangMakarovWennman2020}. We are already investigating an explicit model with singular boundary points for $d>1$, and hope to publish our results in the near future.

%
%
%

\section{A factorization into planar weights} \label{sec:2}

In this section we will prove Theorem \ref{thm:generalMain}. 
Henceforth, we assume that
\begin{align*}
    \mathscr Q(z) = \sum_{k=1}^d Q_k(z_k).
\end{align*}
We will assume that each $Q_k$ is $C^2$ and satisfies the growth condition
\begin{align} \label{eq:growthQ}
\liminf_{|z|\to\infty} \frac{Q_k(z)}{\log |z|^2} > 1.
\end{align}

\subsection{Preparation: some planar potential theory}

For any $\tau\geq 0$, and any $Q:\mathbb C\to\mathbb R$ satisfying the above growth condition, we define the $\tau$-obstacle function $\check Q_{\tau} : \mathbb C\to[0,\infty)$ as the maximal subharmonic function $q$ such that $q\leq Q$ and
\[
q(z) \leq \tau \log |z|^2 + \mathcal O(1)
\]
as $|z|\to \infty$. In fact, the solution satisfies 
\begin{align} \label{eq:growthCheckQ}
\check Q_\tau(z)=\tau\log|z|^2+\mathcal O(1)
\end{align}
as $|z|\to\infty$. We define the $\tau$-predroplet as the coincidence set
\[
S_{Q,\tau}^\star = \{z\in\mathbb C : \check Q_{\tau}(z)=Q(z)\}.
\]
For $\tau>0$ (and $d=1$), the $\tau$-droplet $S_{Q,\tau}$ is defined as the support of the unique minimizer of the functional
\begin{align} \label{eq:MinProb}
J(\mu) = \int_{\mathbb C} \int_{\mathbb C} \log \frac1{|z_j-z_k|} \, d\mu(z) d\mu(w) + \int_{\mathbb C} Q(z) d\mu(z)
\end{align}
over all compactly supported Borel  measures $\mu$ on $\mathbb C$ with total mass $\tau$, while for $\tau=0$ we define $S_{Q,0}$ as the set of $z\in\mathbb C$ where $Q_k$ attains its minima. Note that we automatically have $\check Q_{0}=\min Q$ and $S_{Q,0}=S_{Q,0}^\star$. Note that by the maximality of $\check Q_\tau$, we have
\begin{align} \label{eq:SQtauSeq}
S_{Q,\tau}^\star \subset S_{Q,\tau'}, \quad 0\leq \tau\leq \tau'.
\end{align}
Further down in this section we shall consider several potentials $Q_k$ with $k=1,\ldots, d$ and then we denote the corresponding expressions as $\check Q_{k,\tau}, S_{Q_k,\tau}$ and $S_{Q_k,\tau}^\star$.

We repeat a definition that was used in \cite{HedenmalmWennman2021}.

\begin{definition}[$\tau$-admissibility] \label{def:tauAdm}
Let $\tau> 0$. 
We say that $Q:\mathbb C\to\mathbb R$ is $\tau$-admissible if $S_{Q,\tau}=S_{Q,\tau}^\star$ and all of the following are satisfied:
\begin{itemize}
\item[(i)] $Q$ is $C^2$.
\item[(ii)] $Q$ is real-analytic and strictly subharmonic in a neighborhood of $S_{Q,\tau}$. 
\item[(iii)] $Q$ grows sufficiently fast at infinity:
\[
\liminf_{|z|\to\infty} \frac{Q(z)}{\log |z|^2} > \tau.
\]
\item[(iv)] $\partial S_{Q,\tau}$ is a smooth Jordan curve.
\end{itemize}
\end{definition}
The last condition in particular implies that the $\tau$-droplet is simply connected.
If $Q$ is $\tau_0$-admissible, the conditions imply that $\partial S_{\tau}$ is real-analytically smooth in a neighborhood of $\tau=\tau_0$, as proved in \cite{HedenmalmShimorin2002} with the help of Sakai's work \cite{Sakai1991}. 
We now extend this definition to hold for a range of $\tau$. 

\begin{definition} \label{def:01Adm}
We say that $Q:\mathbb C\to\mathbb R$ is $[0,1]$-admissible if it is $\tau$-admissible for all $\tau\in (0,1+\delta)$ for some $\delta>0$, and furthermore that $S_{Q,0}=\{p_Q\}$ for some $p_Q\in\mathbb C$ (equivalently, that $S_{Q,0}$ is connected).
\end{definition}

The second condition, that $S_{Q,0}$ consists of a single element, is added to assure that $S_{Q,\tau}$ is simply connected for any $\tau$, including $\tau=0$. There are examples of potentials where a topological change occurs as $\tau$ varies, e.g., see \cite{BaloghMerzi2015, Byun2024}. On a heuristic level one could imagine examples of potentials $Q$ such that $S_{Q,\tau}$ is connected for all $\tau\in (0,1]$, but where a topological change occurs at $\tau=0$, and $S_{Q,0}$ consists of more than one element. 

By translation we may always assume without loss of generality that $p_Q=0$. 

\begin{lemma} \label{lem:CheckQtauTo}
If $Q:\mathbb C\to\mathbb R$ is $[0,1]$-admissible then pointwise
\begin{align} \label{eq:limCheckQtau0}
\lim_{\tau\to 0^+} \check Q_\tau = Q_0 = \min Q. 
\end{align}
\end{lemma}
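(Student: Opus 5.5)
We will prove \eqref{eq:limCheckQtau0} by a monotonicity argument, with a squeeze between a trivial lower bound and a local upper bound. With the normalization $p_Q=0$, we have $\check Q_0=\min Q=Q(0)$.

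First, the family $\tau\mapsto\check Q_\tau$ is pointwise nondecreasing. If $\tau\le\tau'$, then every competitor for $\check Q_\tau$ (subharmonic, $\le Q$, and $\le \tau\log|z|^2+\mathcal O(1)$) is also a competitor for $\check Q_{\tau'}$. Moreover, the constant $\min Q$ is a competitor for every $\tau\ge 0$. Hence
\[
\min Q\le \check Q_\tau(z)\le \check Q_{\tau'}(z)\le Q(z), \qquad 0\le\tau\le\tau',
\]
so the limit $L(z)=\lim_{\tau\to0^+}\check Q_\tau(z)$ exists and satisfies $L\ge\min Q$. It remains to show $L\le \min Q$ pointwise.

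For the upper bound, fix $z\in\mathbb C$. We use the structure of $\check Q_\tau$ for $\tau\in(0,1+\delta)$, as recalled from \cite{HedenmalmWennman2021}. By $\tau$-admissibility, $\check Q_\tau=Q$ on $S_{Q,\tau}=S_{Q,\tau}^\star$, and $\check Q_\tau$ is harmonic on $\mathbb C\setminus S_{Q,\tau}$ with growth $\tau\log|z|^2+\mathcal O(1)$. Equivalently, $\check Q_\tau = c_\tau - 2U^{\sigma_\tau}$, where $\sigma_\tau$ is the equilibrium measure of mass $\tau$ from \eqref{eq:MinProb}, $U^{\sigma_\tau}(z)=\int\log\frac1{|z-w|}\,d\sigma_\tau(w)$, and $c_\tau$ is a Robin-type constant.

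Next we show that the droplets shrink to the minimum point. By \eqref{eq:SQtauSeq} the droplets are nested. Any point $w\ne 0$ lies in $S_{Q,\tau}$ only if $Q(w)=\check Q_\tau(w)$. For $\tau$ small, $\check Q_\tau(w)$ is dominated by a subharmonic majorant argument: $\check Q_\tau\le \min_{S_{Q,\tau}}Q+\tau\log^+(\ldots)$. Concretely, on $S_{Q,\tau}$ we have $\Delta Q>0$ and total mass $\int_{S_{Q,\tau}}\Delta Q\,dA=\tau$, so $\mathrm{area}(S_{Q,\tau})\to0$. Combining this with connectedness and $0\in S_{Q,\tau}$ (since $Q(0)=\min Q\le\check Q_\tau(0)\le Q(0)$), we get $\mathrm{diam}\,S_{Q,\tau}\to0$. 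The diameter claim uses that $\partial S_{Q,\tau}$ is a Jordan curve varying smoothly in $\tau$, together with $\bigcap_\tau S_{Q,\tau}=S_{Q,0}=\{0\}$.

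Given shrinking droplets, we estimate $\check Q_\tau(z)$ directly. If $z\in S_{Q,\tau}$ for all small $\tau$, then $z=0$ and $\check Q_\tau(0)=Q(0)$, which settles that case. Otherwise $z\notin S_{Q,\tau}$ for small $\tau$, and we use the representation. On $S_{Q,\tau}$, $\check Q_\tau=Q\le \min Q+o(1)$ as $\tau\to0$, by continuity of $Q$ and $\mathrm{diam}\,S_{Q,\tau}\to 0$. Since $\sigma_\tau$ has mass $\tau$ supported in the disk $D(0,\rho_\tau)$ with $\rho_\tau=\mathrm{diam}\,S_{Q,\tau}\to0$, for $w\in S_{Q,\tau}$ we have
\[
\check Q_\tau(z)-\check Q_\tau(w)=2\int\log\frac{|z-u|}{|w-u|}\,d\sigma_\tau(u).
\]
Averaging over $w$ with respect to $\sigma_\tau/\tau$ bounds this difference by $2\tau\log(|z|+\rho_\tau)+2\tau^{-1}I(\sigma_\tau)$, where $I$ denotes the logarithmic energy. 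The cleaner route is the maximum principle. Let $h(\zeta)=\min Q+\varepsilon+\tau\log^+\big(|\zeta|^2/\rho^2\big)$ with $\rho$ fixed and small so that $S_{Q,\tau}\subset D(0,\rho)$ and $Q\le \min Q+\varepsilon$ there. Then $h$ is harmonic outside $\overline{D(0,\rho)}$ and dominates $\check Q_\tau$ on $\partial D(0,\rho)$ (by the bound on the droplet and the sub-mean value inequality applied to the harmonic extension). Both functions grow like $\tau\log|\zeta|^2$, so the Phragmén–Lindelöf form of the maximum principle gives $\check Q_\tau\le h$ outside $D(0,\rho)$. Letting $\tau\to0$ gives $L(z)\le\min Q+\varepsilon$, and $\varepsilon$ is arbitrary.

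The main obstacle is justifying $\check Q_\tau\le\min Q+\varepsilon$ on $\partial D(0,\rho)$, i.e.\ controlling $\check Q_\tau$ near but outside the droplet. We would handle it by applying the same maximum-principle comparison on $\mathbb C\setminus S_{Q,\tau}$ directly, using the boundary values $Q|_{\partial S_{Q,\tau}}\le\min Q+\varepsilon$ and the comparison function $\min Q+\varepsilon+\tau\log^+(|\zeta|^2/\rho_\tau'^2)$ with $\rho_\tau'$ the inradius-type quantity. Failing that, we would instead invoke the real-analytic dependence of $\partial S_{Q,\tau}$ on $\tau$ to get a uniform Hölder bound.
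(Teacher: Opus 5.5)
Your monotonicity and the lower bound $L\ge \min Q$ are fine; the paper starts the same way. The upper bound, however, is not established as written.

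The decisive inequality $\check Q_\tau\le \min Q+\varepsilon$ on $\partial D(0,\rho)$ is justified by a sub-mean-value argument that cannot give it. For subharmonic functions that inequality bounds a center value by circle averages. It yields no pointwise upper bound at points off the droplet from information on $S_{Q,\tau}$. You flag this yourself as the main obstacle, and neither fallback closes it:
the inradius comparison needs $\tau\log(1/\rho'_\tau)\to 0$, which you have no handle on;
the real-analytic dependence of $\partial S_{Q,\tau}$ is only available for $\tau\in(0,1+\delta)$, with no uniformity as $\tau\to 0^+$.

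The shrinking step has a similar problem. $S_{Q,0}$ is \emph{defined} as the set of minimizers, so nestedness only gives $S_{Q,0}\subset\bigcap_{\tau>0}S_{Q,\tau}$. The reverse inclusion is essentially a consequence of the lemma, not an input. Small area plus connectedness does not force small diameter either.

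The missing observation is that $\check Q_\tau\le Q$ holds on all of $\mathbb C$, not only on the droplet, and $\check Q_\tau$ is subharmonic on all of $\mathbb C$. So the location of the droplet is irrelevant. Choose $\rho>0$ with $Q\le \min Q+\varepsilon$ on $\overline{D(0,\rho)}$, by continuity at the minimum point. Then:
on $|\zeta|=\rho$ you get $\check Q_\tau\le Q\le \min Q+\varepsilon=h$ for free;
$\check Q_\tau-h$ is subharmonic on $\{|\zeta|>\rho\}$ and bounded above near $\infty$ by the growth bound $\check Q_\tau\le \tau\log|\zeta|^2+\mathcal O(1)$;
the maximum principle on this exterior domain gives $\check Q_\tau(z)\le \min Q+\varepsilon+\tau\log(|z|^2/\rho^2)$ for $|z|>\rho$;
for $|z|\le\rho$ you have $\check Q_\tau(z)\le Q(z)\le\min Q+\varepsilon$ directly.
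Let $\tau\to0^+$, then $\varepsilon\to 0$. With this repair the shrinking-droplet discussion and the potential representation can simply be deleted.

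Repaired this way, your proof is a genuinely different and more elementary route than the paper's. The paper shows the limit is harmonic on $\mathbb C\setminus\{p_Q\}$ via Harnack's principle, which uses that each $z\neq p_Q$ eventually leaves $S_{Q,\tau}$. It then removes the singularity at $p_Q$ and applies Liouville. Your comparison argument uses only three facts: $Q$ is continuous, its minimum is attained, and $\check Q_\tau$ satisfies the logarithmic growth bound. It uses none of the admissibility hypotheses.
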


\begin{proof}
For each fixed $z\in\mathbb C$, $\tau\mapsto \check Q_\tau(z)$ is an increasing function of $\tau$ satisfying the lower bound $\check Q_\tau(z)\geq \min Q$. Hence we are guaranteed that the limit in \eqref{eq:limCheckQtau0} exists, let us denote it by $\check Q_*$. Next, we should argue that it equals $\min Q$. For $z=p_Q$, by \eqref{eq:SQtauSeq}, we find trivially
\[
\check Q_*(p_Q)=\lim_{\tau\to 0^+} \check Q_\tau(p_Q) = \lim_{\tau\to 0^+} \check Q_0(p_Q) = \min Q.
\]
For any $z\neq p_Q$ we have $z\in \mathbb C\setminus S_{Q,\tau}$ for $\tau$ small enough. It is a well-known fact that $\check Q_\tau$ is harmonic outside its $\tau$-droplet for any $\tau>0$. Hence, in some bounded neighborhood of $z$, a decreasing sequence $(\check Q_{\tau_k})_k$ which is bounded from below may be constructed, where $\tau_k$ is strictly decreasing with limit $0$. By Harnack's principle \cite{Kellogg1967}, this implies that our sequence convergences to a harmonic function, uniformly on our neighborhood. We conclude that $\check Q_*$ is harmonic on $\mathbb C\setminus\{p_Q\}$. Then $\check Q_*$, restricted to $\mathbb C\setminus\{p_Q\}$ has a removable singularity at $p_Q$. 
We can construct a (possibly different) decreasing sequence of positive $\tau_k$ converging to $0$, and a sequence $p_k\in \partial S_{Q,\tau_k}\setminus\{p_Q\}$ such that $p_k\to p_Q$ as $k\to\infty$. If this were not possible, then, due to \eqref{eq:SQtauSeq}, there would exist an $\varepsilon>0$ such that for $\tau'>0$ small enough
\begin{align*}
\{z\in\mathbb C : |z-p_Q|<\varepsilon\} \subset \bigcap_{0\leq \tau<\tau'} S_{Q, \tau} \subset S_{Q,0} = \{p_Q\},
\end{align*}
a contradiction. Since $S_{Q,\tau}^\star=S_{Q,\tau}$, we have $\check Q_\tau=Q$ on $\partial S_{Q,\tau}$ and the continuity of $Q$ yields
\[
\min Q\leq \lim_{k\to\infty} \check Q_*(p_k)
\leq \lim_{k\to\infty} \check Q_{\tau_k}(p_k)
= \lim_{k\to\infty} Q(p_k) = Q(p_Q) = \min Q.
\]
Hence the value dictated by the removable singularity coincides with $Q_*(p_Q)=\min Q$, and we conclude that $\check Q_*$ is a harmonic function on $\mathbb C$. Since, for any fixed $z\in\mathbb C$, $\tau\mapsto \check Q_\tau(z)$ is decreasing, we have, e.g., $\check Q_*\leq \check Q_1$. We infer that $\check Q_*$ is a harmonic function on $\mathbb C$ satisfying the growth condition
\[
\check Q_*(z) \leq \log |z|^2+\mathcal O(1), \quad |z|\to \infty. 
\]
Since this is slower than linear growth, a version of Liouville's theorem tells us that $\check Q_*$ is constant, and thus $\check Q_*=\min Q$ identically. We have proved \eqref{eq:limCheckQtau0} as a pointwise limit.
\end{proof}

With an argument involving the Herglotz transform (see, e.g., \cite{MarzoMolagOrtegaCerda2026}) one may argue that $\check Q_\tau(z)$ is a real-analytic function of $\tau$ on $(0,1]$ when it is $[0,1]$-admissible. Combined with Lemma \ref{lem:CheckQtauTo} this yields the following corollary. 

\begin{corollary} \label{cor:QtauCont01}
If $Q:\mathbb C\to\mathbb R$ is $[0,1]$-admissible then for each $z\in\mathbb C$ the function $\tau\mapsto \check Q_\tau(z)$ is continuous on $[0,1]$. 
\end{corollary}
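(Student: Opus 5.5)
The plan is to split $[0,1]$ into the endpoint $\tau=0$ and the open-right part $(0,1]$, treating each with a result already at hand. Fix $z\in\mathbb C$ and set $g(\tau)=\check Q_\tau(z)$.

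\emph{Interior and right endpoint.} On $(0,1]$, continuity will be taken from the real-analyticity of $\tau\mapsto\check Q_\tau(z)$ quoted just before the corollary (the Herglotz-transform argument as in \cite{MarzoMolagOrtegaCerda2026}). Since $Q$ is $\tau$-admissible for every $\tau\in(0,1+\delta)$, this analyticity holds on the open interval $(0,1+\delta)$. That interval contains $(0,1]$ in its interior, so $g$ is also continuous at $\tau=1$ as a two-sided limit, and a fortiori from the left.

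\emph{Left endpoint.} At $\tau=0$ we use the convention of Section~\ref{sec:2}, $\check Q_0=\min Q$. Lemma~\ref{lem:CheckQtauTo} states precisely that $\lim_{\tau\to0^+}\check Q_\tau(z)=\min Q$ pointwise, i.e.\ $\lim_{\tau\to 0^+}g(\tau)=g(0)$. This gives right-continuity at $0$.

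Combining the two parts gives continuity on all of $[0,1]$. I do not expect a real obstacle, since the corollary only assembles the lemma with the cited analyticity. The one point worth stating explicitly is that the analyticity must be used on an open neighbourhood of $(0,1]$, which is why $\tau$-admissibility up to $1+\delta$ is assumed in Definition~\ref{def:01Adm}.

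If one wanted to avoid leaning on analyticity, monotonicity of $\tau\mapsto\check Q_\tau(z)$ alone would not suffice, because a monotone function can still jump. In that case I would argue via Harnack's principle as in the proof of Lemma~\ref{lem:CheckQtauTo}. Off $S_{Q,\tau_0}$, monotone limits of $\check Q_\tau$ as $\tau\to\tau_0^\pm$ are harmonic with the same boundary values and growth, and they must therefore coincide with $\check Q_{\tau_0}$. On the droplet, the values are pinned to $Q$.
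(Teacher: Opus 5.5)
Your proposal is correct and follows essentially the same route as the paper: real-analyticity of $\tau\mapsto\check Q_\tau(z)$ (via the Herglotz-transform argument) gives continuity on $(0,1]$, and Lemma~\ref{lem:CheckQtauTo} together with the convention $\check Q_0=\min Q$ gives right-continuity at $\tau=0$. The Harnack-type alternative in your last paragraph is only a sketch (boundary values and uniformity of the growth constant would need more care), but it is not needed for the argument.
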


\subsection{Preparation: some pluripotential theory}

Let us now return to the higher-dimensional weight with potential 
\begin{align*}
\mathscr Q(z)=Q_1(z_1)+\ldots+Q_d(z_d), \qquad z\in\mathbb C^d.
\end{align*}
In our particular setting, the Monge-Ampère measure of $\mathscr Q$, due to its specific decomposition, is explicitly given by
\begin{align*}
    \mathfrak{1}_{S_{\mathscr Q}}(z) \partial\bar\partial \mathscr Q(z) d\omega(z) = \mathfrak{1}_{S_{\mathscr Q}}(z) \prod_{k=1}^d \Delta Q_k(z_k) dA(z_k), \qquad z\in\mathbb C^d.
\end{align*}

\begin{proposition} \label{thm:obstacleFunction}
    Suppose that each $Q_k$ is $[0,1]$-admissible. Then the obstacle function as defined in \eqref{eq:defPluriObstF} is explicitly given by
    \begin{align} \label{eq:pluriObstacle}
    \check{\mathscr Q}(z) = \max_{\substack{\tau_1 \, , \, \ldots \, , \, \tau_d\geq 0\\
        \tau_1+\ldots+\tau_d=1}}
        \sum_{k=1}^d \check{Q}_{k,\tau_k}(z_k).
    \end{align}
\end{proposition}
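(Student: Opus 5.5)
Write $F(z)$ for the right-hand side of \eqref{eq:pluriObstacle}. We prove $\check{\mathscr Q}=F$ by establishing two inequalities.

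\textbf{Lower bound $F\le\check{\mathscr Q}$.} Fix $\tau=(\tau_1,\ldots,\tau_d)$ with $\tau_k\ge0$ and $\sum_k\tau_k=1$, and set $q_\tau(z)=\sum_k \check Q_{k,\tau_k}(z_k)$. This function has the following properties.
\begin{itemize}
\item It is plurisubharmonic, being a sum of subharmonic functions of separate variables.
\item It satisfies $q_\tau\le\mathscr Q$, since each term satisfies $\check Q_{k,\tau_k}\le Q_k$.
\item It lies in the Lelong class. By \eqref{eq:growthCheckQ}, and using boundedness of $\check Q_{k,\tau_k}$ on compacts for the coordinates that stay bounded,
\[
q_\tau(z)\le\sum_k\tau_k\log(1+|z_k|^2)+C\le\log(1+|z|^2)+C,
\]
because $\sum_k\tau_k=1$ and $|z_k|\le|z|$.
\end{itemize}
The constant $C$ here depends on $\tau$, but that is harmless for a single fixed $\tau$. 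Hence $q_\tau\le\check{\mathscr Q}$ for each $\tau$, and taking the supremum over $\tau$ gives $F\le\check{\mathscr Q}$. The supremum is attained as a maximum, since $\tau\mapsto q_\tau(z)$ is continuous on the simplex by Corollary \ref{cor:QtauCont01}.

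\textbf{Upper bound $\check{\mathscr Q}\le F$.} I would argue by duality via Legendre transforms in the $\tau$-variable.

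First, for a planar $[0,1]$-admissible $Q$ and $z$ fixed, the function $\tau\mapsto\check Q_\tau(z)$ is concave on $[0,1+\delta)$. The reason is that $\check Q_\tau$ is a maximal subharmonic envelope with a growth constraint that is linear in $\tau$: if $q_0$ and $q_1$ are admissible for $\tau_0$ and $\tau_1$, then $(1-s)q_0+sq_1$ is admissible for $(1-s)\tau_0+s\tau_1$.

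Next, for $q\in\mathcal L(\mathbb C^d)$ with $q\le\mathscr Q$, I use a slicing argument. Fix $z\in\mathbb C^d$ and the torus/complex-line structure. For a direction $a=(a_1,\ldots,a_d)$, restrict $q$ to the curve $\lambda\mapsto(z_1,\ldots,z_{k-1},\lambda,z_{k+1},\ldots)$; each such restriction is subharmonic in one coordinate but has growth $\log|\lambda|^2$, which is too weak. The better route is a Lagrange-multiplier / supporting-hyperplane argument: it suffices to show
\[
\check{\mathscr Q}(z)\le\sum_k\check Q_{k,\tau_k}(z_k)
\]
for the particular $\tau$ that maximizes $F(z)$. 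At this maximizer, the KKT conditions give a common value $\lambda$ of the one-sided derivatives $\partial_\tau^{\pm}\check Q_{k,\tau}(z_k)$ across the coordinates with $\tau_k>0$.

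Using $\check Q_{k,\tau}(w)\le\check Q_{k,\tau_k}(w)+\lambda(\tau-\tau_k)$, which holds by concavity, I would show that $q(w)-\lambda\log|w|^2$ (appropriately regularized) is dominated coordinatewise. The plan is to consider
\[
u(w)=q(w)-\sum_{k\neq j}\check Q_{k,\tau_k}(w_k)
\]
as a subharmonic function of $w_j$ alone, with the other coordinates frozen at $z$. It has growth at most $\log|w_j|^2$, which is too much. To cut the growth down to $\tau_j\log|w_j|^2$ I would instead use the multi-circular monomial test: it is classical that
\[
\check{\mathscr Q}=\sup_n\frac1n\log\sup\{|p|^2:\deg p\le n,\ |p|^2e^{-n\mathscr Q}\le1\},
\]
the Siciak--Zaharjuta extremal function. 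Any polynomial $p$ splits into monomials $z^\alpha$ with $|\alpha|\le n$. By the product structure, a weighted bound on $p$ yields, via Cauchy estimates on product domains, bounds of the form $|c_\alpha z^\alpha|^2e^{-n\mathscr Q}\lesssim$ subexponential. Each monomial factors, and for $\tau_k=\alpha_k/n$ the one-variable extremal property gives
\[
\tfrac1n\log|c_\alpha z^\alpha|^2\le\sum_k\check Q_{k,\alpha_k/n}(z_k)+o(1)\le F(z)+o(1).
\]
Since $p$ has at most $N_n^d$ monomials, which is polynomial in $n$, summing loses only $o(1)$ after taking $\tfrac1n\log$.

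\textbf{Main obstacle.} The hard step is the monomial decomposition estimate: bounding each coefficient $c_\alpha z^\alpha$ in weighted norm by $p$ itself. I would handle it with $L^2$ orthogonality after first passing from a sup-norm to an $L^2$-norm, which loses only subexponential factors (Bernstein--Markov for admissible weights). In $L^2$ the product structure plus averaging over the torus action is not available in general, since $Q_k$ is not radial, so I would instead use the $L^2$ extremal description $\check{\mathscr Q}=\lim\frac1n\log\boldsymbol k_n(z,z)$ from Berman. In that description the product weight makes $\boldsymbol k_n$ a sum over $|\alpha|<n$ of products of planar partial kernels $\sum_{j\le\alpha_k}$, once one uses an orthonormal basis of tensor products $P^{(1)}_{\alpha_1}\cdots P^{(d)}_{\alpha_d}$. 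The planar asymptotic
\[
\tfrac1n\log|P^{(k)}_{j}(z_k)|^2\to\check Q_{k,j/n}(z_k)
\]
holds uniformly, using continuity in $\tau$ (Corollary \ref{cor:QtauCont01}) to reach $\tau=0$. Taking $\tfrac1n\log$ of this polynomially-many-term sum gives exactly $F$, which also yields equality directly.
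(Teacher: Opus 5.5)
Your lower bound $F\le\check{\mathscr Q}$ is correct. The upper-bound route you finally settle on also works, namely Berman's $\check{\mathscr Q}(z)=\lim_n\frac1n\log\boldsymbol k_n(z,z)$ combined with the tensor orthonormal basis $\prod_k P^{(k)}_{\alpha_k}(z_k)$, $|\alpha|<n$. It does not work as you state it, however. The uniform two-sided limit $\frac1n\log|P^{(k)}_j(z_k)|^2\to\check Q_{k,j/n}(z_k)$ is false on the interior of $S_{Q_k,j/n}$, where the zeros of $P^{(k)}_j$ lie. For $Q_k=|z|^2$ one has $P_j=c_jz^j$, so $\frac1n\log|P_j(0)|^2=-\infty$ while $\check Q_{j/n}(0)=0$.

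Fortunately you only need the upper bound, and it is elementary. First, $|P^{(k)}_j|^2e^{-nQ_k}\le Cn^2$ on all of $\mathbb C$, uniformly in $j<n$: use the sub-mean value property on discs of radius $1/n$ near $S_{Q_k,1}$, then the weighted maximum principle. Hence $\frac1n\log(|P^{(k)}_j|^2/Cn^2)$ is subharmonic, $\le Q_k$, with growth $\frac jn\log|z|^2+\mathcal O(1)$, so it is $\le\check Q_{k,j/n}$ by definition of the envelope. Since $\tau\mapsto\check Q_{k,\tau}(z_k)$ is increasing, every $\alpha$ with $|\alpha|<n$ is dominated by a point of the simplex, and
\[
\check{\mathscr Q}(z)=\lim_{n\to\infty}\tfrac1n\log\boldsymbol k_n(z,z)\le\lim_{n\to\infty}\tfrac1n\log\big(N_n^d(Cn^2)^d\big)+F(z)=F(z).
\]
Continuity in $\tau$ plays no role here. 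You should drop the claim that this ``also yields equality directly''; equality comes from your first part.

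The concavity, KKT and sup-norm monomial detours can be deleted. The monomial estimate genuinely fails for non-circular $Q_k$: take $Q_k(z)=|z-a|^2$ and $p=(z-a)^j$, whose $z^0$ coefficient is exponentially too large. So the switch to the tensor orthonormal basis is essential, not cosmetic.

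This is a genuinely different route from the paper's. The paper argues by slicing: it reads off growth exponents $\tau_k^*$ of $q$ along the coordinate lines through $(p,\ldots,p)$. It then applies the one-variable maximality of $\check Q_{k,\tau}$ coordinate by coordinate, and handles only the case $\sum_k\tau_k^*<1$.

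That approach is more elementary, since it needs no Bergman kernel asymptotics. But it never establishes $\sum_k\tau_k^*\le 1$, and this can fail: for $\mathscr Q=|z|^2$ and $q=\check{\mathscr Q}$, every $\tau_k^*$ equals $1$. In that case the slicing bound $q\le\sum_k\check Q_{k,\tau_k^*}(z_k)$ is strictly weaker than $q\le F$.

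Your argument gets the constraint for free, because the basis elements have total degree $|\alpha|<n$. The price is a deep input, the Siciak--Zaharjuta theorem in Berman's $L^2$ form. Its hypotheses hold here: $\mathscr Q$ is $C^2$, and it satisfies \eqref{eq:growthCond} because each $Q_k$ does.
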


\begin{proof}
    We have for all $z\in\mathbb C^d$ that $\check Q_{k,\tau}(z_k)\leq Q_k(z_k)$ for any $\tau\in (0,1]$ and $k=1,\ldots,d$. Thus $\check{\mathscr Q}$ as defined in \eqref{eq:pluriObstacle} satisfies
    \[
    \check{\mathscr Q}(z)\leq 
    \max_{\substack{\tau_1 \, , \, \ldots \, , \, \tau_d\geq 0\\
        \tau_1+\ldots+\tau_d=1}}
        \sum_{k=1}^d Q_k(z_k) = \mathscr Q(z).
    \]
    Furthermore, as $|z|\to\infty$, we have
    \begin{align*}
    \check{\mathscr Q}(z)
        \leq \max_{\substack{\tau_1 \, , \, \ldots \, , \, \tau_d\geq 0\\
        \tau_1+\ldots+\tau_d=1}}
        \sum_{k=1}^d \tau_k \log|z|^2+\mathcal O(1)
        = \log |z|^2+\mathcal O(1).
    \end{align*}
    Thus $\check{\mathscr Q}$ satisfies the required properties, except for the maximality, which we now prove. We follow a proof style similar to Klimek \cite{Klimek1991}. Let $q\in\mathcal L(\mathbb C^d)$ such that $q\leq \mathscr Q$. Then the functions where we fix all but one variables to be some $p\in\mathbb C$ are subharmonic functions of at most logarithmic growth. For example
    \begin{align*}
        q(z_1,p,\ldots,p) \leq \log|(z_1,p,\ldots,p)|^2+\mathcal O(1)\leq \log |z_1|^2+\mathcal O(1)
    \end{align*}
    as $|z_1|\to\infty$. Now define
    \begin{align*}
        \tau_1^* = \limsup_{R\to\infty} \frac{\displaystyle\sup_{|z_1|=R}q(z_1,p,\ldots,p)}{\log R^2},
    \end{align*}
    and similarly $\tau_k^*$ for the $d-1$ other functions. Then by maximality we must have
    \begin{align*}
        q(z_1,p,\ldots,p)- \sum_{k=2}^d Q_k(p)\leq \check Q_{1,\tau_1^*}(z)
    \end{align*}
    and similarly for $k=2,\ldots,d$. Notice that for fixed $z_2$
    \begin{align*}
        q(z_1,z_2,p,\ldots,p)-Q_2(z_2)-\sum_{k=3}^d Q_k(p)
    \end{align*}
    defines a subharmonic function on $\mathbb C$ which is $\leq Q(z_1)$ and satisfies the growth condition
    \[
    q(z_1,z_2,p,\ldots,p)-Q_2(z_2)- \sum_{k=3}^d Q_k(p)\leq\tau_1^*\log|z_1|^2+\mathcal O(1)
    \]
    as $|z_1|\to\infty$. Thus, by maximality
    \[
    q(z_1,z_2,p,\ldots,p)\leq \check Q_{1,\tau_1^*}(z_1)+Q_2(z_2)+ \sum_{k=3}^d Q_k(p).
    \]
    By symmetry we get a similar inequality for $z_2$ and thus
    \begin{align*}
    q(z_1,z_2,p,\ldots,p)
    &\leq \frac12 Q_1(z_1)+\frac12\check Q_{1,\tau_1^*}(z_1)+\frac12 Q_2(z_2)+\frac12 \check Q_{2,\tau_2^*}(z_2)+ \sum_{k=3}^d Q_k(p)\\
    &\leq \check Q_{1,\tau_1^*}(z_1)+\check Q_{2,\tau_2^*}(z_2)+ \sum_{k=3}^d Q_k(p).
    \end{align*}
    This argument may be repeated by induction and we obtain
    \[
    q(z)\leq  \sum_{k=1}^d \check Q_{k,\tau_k^*}(z_k).
    \]
    If $\tau_1^*+\ldots+\tau_d^*<1$, then we may simply increase some of the $\tau_k^*$ until the sum is $1$. This will give us a function that dominates $q$ and is still $\leq \mathscr Q$ while being $\leq \log|z|^2+\mathcal O(1)$ as $|z|\to \infty$. However, that function in turn is dominated by $\check{\mathscr Q}$ as defined in \eqref{eq:pluriObstacle}. 
\end{proof}

\begin{lemma} \label{lem:MAMeas}
   Assume that each $Q_k$ is $[0,1]$-admissible. Then the Monge-Ampère measure is given by 
    \[
    \det\partial\bar\partial \mathscr Q(z) \, d\omega(z) = \max_{\substack{\tau_1 \, , \, \ldots \, , \, \tau_d\geq 0\\
        \tau_1+\ldots+\tau_d=1}}
        \prod_{k=1}^d \Delta \check Q_{k,\tau_k}(z_k) \, d\omega(z).
    \]
\end{lemma}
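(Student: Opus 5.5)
The plan is to compute both sides pointwise almost everywhere and to reduce the claim to an identity of sets. I would start with the planar facts. For $\tau\in(0,1+\delta)$ the function $\check Q_{k,\tau}$ coincides with $Q_k$ on $S_{Q_k,\tau}=S_{Q_k,\tau}^\star$ and is harmonic off it. Since $\partial S_{Q_k,\tau}$ is a smooth Jordan curve, hence a null set, this gives
\[
\Delta \check Q_{k,\tau}(z_k)=\mathfrak 1_{S_{Q_k,\tau}}(z_k)\,\Delta Q_k(z_k)
\]
for a.e. $z_k$. For $\tau=0$ we have $\check Q_{k,0}\equiv\min Q_k$, so $\Delta\check Q_{k,0}=0$, which agrees with $\mathfrak 1_{S_{Q_k,0}}\Delta Q_k$ a.e. because $S_{Q_k,0}=\{p_{Q_k}\}$. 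Each $Q_k$ is strictly subharmonic near its droplets, so $\Delta Q_k>0$ there. The right-hand side of the lemma is therefore
\[
\prod_{k=1}^d\Delta Q_k(z_k)\cdot\mathfrak 1_{E}(z),\qquad E=\Big\{z : \exists\,\tau_1,\ldots,\tau_d\ge 0,\ \textstyle\sum_k\tau_k=1,\ z_k\in S_{Q_k,\tau_k}\ \forall k\Big\}.
\]
Because $\det\partial\bar\partial\mathscr Q(z)=\prod_k\Delta Q_k(z_k)$, the left-hand side (understood as the Monge--Amp\`ere measure $\mathfrak 1_{S_{\mathscr Q}}\det\partial\bar\partial\mathscr Q\,d\omega$, i.e.\ $\det\partial\bar\partial\check{\mathscr Q}\,d\omega$ by \eqref{eq:ddcQ=ddcChQ}) has the same form with $\mathfrak 1_{S_{\mathscr Q}}$ in place of $\mathfrak 1_E$. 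So it suffices to show $E=S_{\mathscr Q}$ up to a null set.

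Next I would identify $E$ with the predroplet using Proposition \ref{thm:obstacleFunction}. By Corollary \ref{cor:QtauCont01}, $\tau\mapsto\check Q_{k,\tau}(z_k)$ is continuous on $[0,1]$, and the simplex is compact, so the maximum in \eqref{eq:pluriObstacle} is attained. Since $\check Q_{k,\tau}\le Q_k$ termwise, $\check{\mathscr Q}(z)=\mathscr Q(z)$ holds if and only if at the maximizer $\check Q_{k,\tau_k}(z_k)=Q_k(z_k)$ for every $k$. That is, $z_k\in S^\star_{Q_k,\tau_k}=S_{Q_k,\tau_k}$ for every $k$, so $S^\star_{\mathscr Q}=E$.

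It remains to show that $S^\star_{\mathscr Q}\setminus S_{\mathscr Q}$ is null. I would use Berman's result that the Monge--Amp\`ere measure of $\check{\mathscr Q}$ lives on the coincidence set, where it equals $\det\partial\bar\partial\mathscr Q$ a.e. Alternatively I would argue directly. Introduce $\tau_k^0(z_k)=\inf\{\tau: z_k\in S_{Q_k,\tau}\}$, which is well defined by the monotonicity \eqref{eq:SQtauSeq}. Then $E=\{\sum_k\tau^0_k(z_k)\le 1\}$, because any excess mass can be added to some $\tau_k$. The points with $\sum_k\tau_k^0(z_k)<1$ are interior points of $E$, and interior points of the coincidence set lie in the support $S_{\mathscr Q}$ of the Monge--Amp\`ere measure since $\det\partial\bar\partial\mathscr Q>0$ there. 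The remaining piece $\{\sum_k\tau_k^0(z_k)=1\}$ is a level set of a function that is smooth and strictly increasing in each $|z_k|$-type direction, since the boundaries $\partial S_{Q_k,\tau}$ move real-analytically in $\tau$. It is therefore a null hypersurface, together with the null sets where some $z_k=p_{Q_k}$.

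I expect this last step to be the main obstacle: making rigorous that $\{\sum_k\tau_k^0=1\}$ has measure zero and that $E$ and $S_{\mathscr Q}$ differ only by a null set. My first attempt would be the strict monotonicity of $\tau\mapsto S_{Q_k,\tau}$, which follows from $\Delta Q_k>0$ and the fact that the droplet has mass $\tau$, combined with a Fubini argument in each coordinate.
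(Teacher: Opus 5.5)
Your proof is correct, and it takes a genuinely different route from the paper. You are right to read the left-hand side as the Monge--Amp\`ere measure of $\check{\mathscr Q}$; the paper's own proof works with approximants of $\check{\mathscr Q}$.

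\textbf{The paper's route.} It argues by approximation. It discretizes $\tau$ to the lattice $j/n$, forms the finite maxima $\check{\mathscr Q}_n$ of \eqref{eq:discreteCheckQ}, and shows via Corollary~\ref{cor:QtauCont01} that they increase to $\check{\mathscr Q}$. It then applies Bedford--Taylor monotone continuity of the Monge--Amp\`ere operator, identifies the Monge--Amp\`ere density of $\check{\mathscr Q}_n$ with $\max_j\prod_k\Delta\check Q_{k,j_k/n}$, and passes to the limit inside the maximum. The contact set never appears.

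\textbf{Your route.} You write both sides as an indicator times $\prod_k\Delta Q_k(z_k)$ and reduce the lemma to the set identity $E=S^\star_{\mathscr Q}$. That identity follows from Proposition~\ref{thm:obstacleFunction} together with attainment of the maximum. This is exactly the first half of the proof of the proposition following the lemma in the paper, and that half does not use the lemma, so there is no circularity.

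The price is that you import Berman's envelope theorem in its contact-set form: $\det\partial\bar\partial\check{\mathscr Q}\,d\omega=\mathfrak 1_{\{\check{\mathscr Q}=\mathscr Q\}}\det\partial\bar\partial\mathscr Q\,d\omega$ on all of $\mathbb C^d$. As quoted, \eqref{eq:ddcQ=ddcChQ} only asserts agreement on $S_{\mathscr Q}$, so you genuinely need the vanishing off the contact set. Once you use this, the detour through $S_{\mathscr Q}$ is unnecessary, and so is the obstacle you anticipate: the left side is directly $\mathfrak 1_{S^\star_{\mathscr Q}}\prod_k\Delta Q_k$.

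The level set $\{\sum_k\tau_k^0=1\}$, which equals $\partial S_{\mathscr Q}$ by Proposition~\ref{thm:dropletBoundary}, enters only through the choice of representatives. You are maximizing a.e.-defined densities over a continuum of $\tau$'s, so some convention is needed. Your closed-droplet convention makes the identity exact. With the open-droplet convention the discrepancy is precisely that level set, which is null by Fubini: each slice $\{z_1:\tau_1^0(z_1)=c\}$ is the curve $\partial S_{Q_1,c}$, the point $p_{Q_1}$, or empty.

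\textbf{What each approach buys.} Your route is shorter and more robust. The paper's argument relies on the claim that a finite maximum of $C^{1,1}$ functions is $C^{1,1}$, which fails in general; the $\check{\mathscr Q}_n$ have gradient jumps where two multi-indices tie. Identifying the Monge--Amp\`ere measure of $\check{\mathscr Q}_n$ with the pointwise maximum of the densities therefore needs an extra argument ruling out mass on the switching sets. The paper's route, in turn, uses only the formula \eqref{eq:pluriObstacle} and general Monge--Amp\`ere continuity, with no structural input about the envelope or its contact set.
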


\begin{proof}
Consider a sequence $\check{\mathscr Q}_n$ given explicitly by
\begin{align} \label{eq:discreteCheckQ}
\check{\mathscr Q}_n(z) = \max_{\substack{j_1 \, , \, \ldots, j_d\in\mathbb N_0\\ j_1+\ldots+j_d=n}} 
\sum_{k=1}^d \check Q_{k,j_k/n}(z_k).
\end{align}
The pointwise maximum of a finite number of plurisubharmonic functions is again plurisubharmonic. 
Hence $\check{\mathscr Q}_n$ is a sequence of increasing locally bounded plurisubharmonic functions. 
Furthermore, we have the bounds
\[
\check{\mathscr Q}_n(z) \leq \check{\mathscr Q}(z)\leq \mathscr Q(z).
\]
Thus, for any $z\in\mathbb C^d$, $\check{\mathscr Q}_n(z)$ converges as $n\to\infty$. We will show that it in fact converges to $\check{\mathscr Q}(z)$. Fix a $z\in\mathbb C$. For each $n$, we find a multi-index $j^{(n)}(z)=(j_1^{(n)}(z), \ldots, j_d^{(n)}(z))\in[0,n]^d$ which yields the maximum on the right-hand side in \eqref{eq:discreteCheckQ}. By possibly taking a subsequence, we may assume that
\[
\lim_{n\to\infty} \frac{j^{(n)}(z)}{n} = \tau(z) = (\tau_1(z), \ldots, \tau_d(z)),
\]
for some limit $\tau(z)\in [0,1]^d$. Then by Corollary \ref{cor:QtauCont01} we have
\[
\lim_{n\to\infty} \check{\mathscr Q}_n(z) = \sum_{k=1}^d \lim_{\tau_k\to \tau_k(z)} \check Q_{k, \tau_k}(z_k) 
= \sum_{k=1}^d \check Q_{k, \tau_k(z)}(z_k)
= \check{\mathscr Q}(z),
\]
where the last step follows by a denseness argument and \eqref{eq:pluriObstacle}.

Then by the Bedford-Taylor theorem \cite[Theorem 2.1]{BedfordTaylor1982}
\begin{align*}
   \det\partial\bar\partial \mathscr Q(z) = \lim_{n\to\infty} \det\partial\bar\partial \mathscr Q_n(z).
\end{align*}
The pointwise maximum of a finite number of $C^{1,1}$ functions is again $C^{1,1}$, hence $\check{\mathscr Q}_n$ is $C^{1,1}$ and (by Rademacher's theorem) twice differentiable $L^2(\mathbb C^d)$-a.e., and we may thus apply the Monge-Ampère operator to $\check{\mathscr Q}_n$ to get a density function
\[
\lim_{n\to\infty} \det\partial\bar\partial \mathscr Q_n(z)
= \max_{\substack{j_1 \, , \, \ldots, j_d\in\mathbb N_0\\ j_1+\ldots+j_d=n}} 
\prod_{k=1}^d \Delta\check Q_{k,j_k/n}(z_k).
\]
Obviously, we have
\[
\max_{\substack{j_1 \, , \, \ldots, j_d\in\mathbb N_0\\ j_1+\ldots+j_d=n}} 
\prod_{k=1}^d \Delta\check Q_{j_k/n}(z)
\leq \max_{\substack{\tau_1 \, , \, \ldots \, , \, \tau_d\geq 0\\
        \tau_1+\ldots+\tau_d=1}}
        \prod_{k=1}^d \Delta \check Q_{\tau_k}(z_k).
\]
Suppose the left-hand side does not converge to the right-hand side. Then there exists an $\varepsilon>0$ and a subsequence $\check{\mathscr Q}_{n_m}$ with
\[
\max_{\substack{j_1 \, , \, \ldots, j_d\in\mathbb N_0\\ j_1+\ldots+j_d=n_m}} 
\prod_{k=1}^d \Delta\check Q_{j_k/n_m}(z)
\leq -\varepsilon+ \max_{\substack{\tau_1 \, , \, \ldots \, , \, \tau_d\geq 0\\
        \tau_1+\ldots+\tau_d=1}}
        \prod_{k=1}^d \Delta Q_{\tau_k}(z_k).
\]
However, any combination $(\tau_1,\ldots,\tau_d)$ can be approximated by $(j_1,\ldots,j_d)/n_m$ if we pick $n_m$ large enough. This, in combination with continuity following from Corollary \ref{cor:QtauCont01}, yields a contradiction, and the lemma follows.
\end{proof}

\begin{proposition}
    Suppose that $Q$ is $[0,1]$-admissible. Then we have
    \begin{align*}
        S_{\mathscr Q} = \bigcup_{\substack{\tau_1 \, , \, \ldots \, , \, \tau_d\geq 0\\
        \tau_1+\ldots+\tau_d=1}}
        \prod_{k=1}^d S_{Q,\tau_k}.
    \end{align*}
\end{proposition}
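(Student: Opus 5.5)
We plan to identify $S_{\mathscr Q}$ with the support of the Monge-Ampère measure $\det\partial\bar\partial\check{\mathscr Q}\,d\omega$, as in \eqref{eq:ddcQ=ddcChQ}, and then read this support off from Lemma \ref{lem:MAMeas}. Write $T=\{\tau\in[0,1]^d:\tau_1+\ldots+\tau_d=1\}$ and $S=\bigcup_{\tau\in T}\prod_k S_{Q_k,\tau_k}$. We will show both inclusions.

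For $S\subset S_{\mathscr Q}$, fix $\tau\in T$ and $z$ with $z_k\in S_{Q_k,\tau_k}$ for all $k$. Since $S_{Q_k,\tau_k}=S^\star_{Q_k,\tau_k}$, we have $\check Q_{k,\tau_k}(z_k)=Q_k(z_k)$. Then Proposition \ref{thm:obstacleFunction} gives $\check{\mathscr Q}(z)\ge\sum_k Q_k(z_k)=\mathscr Q(z)$, hence equality, so $S\subset S^\star_{\mathscr Q}$. To land in the support we use density.

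\begin{itemize}
\item First take $\tau$ with all $\tau_k>0$ and $z_k$ in the interior of $S_{Q_k,\tau_k}$. There $\check Q_{k,\tau_k}=Q_k$ is strictly subharmonic, so the product $\prod_k\Delta\check Q_{k,\tau_k}(z_k)$ is positive on a neighbourhood of $z$.
\item By Lemma \ref{lem:MAMeas} the Monge-Ampère density is at least this product on that neighbourhood, so $z\in\operatorname{supp}=S_{\mathscr Q}$.
\item Such points are dense in $S$. We use the monotonicity \eqref{eq:SQtauSeq}, the smooth Jordan boundaries, and the fact that $\partial S_{Q_k,\tau}$ moves continuously in $\tau$ and shrinks to $\{0\}$ as $\tau\to0$ (Lemma \ref{lem:CheckQtauTo}). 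Compactness of $S_{\mathscr Q}$ then gives $S\subset S_{\mathscr Q}$.
\end{itemize}

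For the converse, let $z\notin S$. We show the Monge-Ampère density vanishes a.e. near $z$. For every $\tau\in T$ some coordinate satisfies $z_k\notin S_{Q_k,\tau_k}$. Outside its droplet $\check Q_{k,\tau_k}$ is harmonic, so $\Delta\check Q_{k,\tau_k}(z_k)=0$ and the product in Lemma \ref{lem:MAMeas} vanishes. When $\tau_k=0$ the function $\check Q_{k,0}$ is constant, so the product vanishes as well.

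To make this uniform in a neighbourhood we argue by compactness. The set $\{(w,\tau):w_k\in S_{Q_k,\tau_k}\ \forall k\}$ is closed in $\mathbb C^d\times T$, by the continuity in $\tau$ of the droplets. Hence its projection $S$ is closed. So a ball around $z$ avoids $S$, the maximum over $\tau$ is $0$ a.e. on that ball, and $z\notin S_{\mathscr Q}$.

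The main obstacle is the continuity and closedness in $\tau$ of the family $S_{Q_k,\tau}$, in particular at the endpoint $\tau=0$. For $\tau\in(0,1+\delta)$ it follows from $\tau$-admissibility and the analytic dependence of $\partial S_{Q,\tau}$ \cite{HedenmalmShimorin2002}. At $\tau=0$ we would try to use Lemma \ref{lem:CheckQtauTo} together with the uniqueness of the minimizer $p_Q$ to show that $S_{Q,\tau}\to\{p_Q\}$ in the Hausdorff sense. A second point needing care is that the maximum in Lemma \ref{lem:MAMeas} is taken pointwise a.e., so positivity on an open set must be checked through the explicit lower bound by a single product.
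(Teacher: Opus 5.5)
Your proof is correct, but it is organised differently from the paper's.

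The paper first determines the predroplet pointwise. Your first paragraph gives $S\subset S^\star_{\mathscr Q}$. Conversely, if $\check{\mathscr Q}(z)=\mathscr Q(z)$, then for a maximising $\tau$ in Proposition~\ref{thm:obstacleFunction} the inequalities $\check Q_{k,\tau_k}\le Q_k$ force $\check Q_{k,\tau_k}(z_k)=Q_k(z_k)$ for every $k$, so $S^\star_{\mathscr Q}\subset S$. Only then does the paper pass, in one sentence, to the support of $\max_\tau \mathfrak{1}_{\prod_k S_{Q_k,\tau_k}}\prod_k\Delta Q_k\,d\omega$.

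You bypass the predroplet and work with the support directly; in your route the inclusion $S\subset S^\star_{\mathscr Q}$ is never used.

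For $S\subset S_{\mathscr Q}$ your argument is more careful than the paper's. The product $\prod_k\Delta\check Q_{k,\tau_k}$ vanishes identically as soon as some $\tau_k=0$. So one genuinely needs your density of points with all $\tau_k>0$ and interior coordinates, which uses that $p_{Q_k}$ is interior to $S_{Q_k,\tau}$ for $\tau>0$. The paper's remark about ``regions with Lebesgue measure $0$'' passes over this.

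For $S_{\mathscr Q}\subset S$, your compactness argument is more than is needed. $S_{\mathscr Q}$ is the support of a measure carried by the closed set $S^\star_{\mathscr Q}$, so $S_{\mathscr Q}\subset S^\star_{\mathscr Q}\subset S$ by the pointwise argument above. No closedness of $S$ is required; it follows anyway, since then $S=S^\star_{\mathscr Q}$.

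If you keep your route, the closedness you flag as the main obstacle comes from Corollary~\ref{cor:QtauCont01}. The map $\tau\mapsto\check Q_{k,\tau}(w)$ is monotone and continuous on $[0,1]$, and $w\mapsto\check Q_{k,\tau}(w)$ is continuous. Together these give joint continuity in $(w,\tau)$. Hence $\{(w,\tau):\check Q_{k,\tau_k}(w_k)=Q_k(w_k)\ \forall k\}$ is closed in $\mathbb C^d\times T$, including at $\tau_k=0$. So no separate Hausdorff-convergence argument at $\tau=0$ is needed.
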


\begin{proof}
    We need to find the coincidence set $\check{\mathscr Q}=\mathscr Q$. Suppose that $z\in\mathbb C$ is in the coincidence set. Thus by Proposition \ref{thm:obstacleFunction} there exists $(\tau_1,\ldots,\tau_d)\in[0,1]^d$ such that $\tau_1+\ldots+\tau_d=1$ and
\[
\sum_{k=1}^d \check Q_{k,\tau_k}(z) = \sum_{k=1}^d Q_k(z_k).
\]
Since, by definition $\check Q_{k, \tau_k}\leq Q_k$ for all $k=1,\ldots,d$,  we necessarily have
    \[
    \check Q_{k, \tau_k}(z_k)=Q_k(z_k)
    \]
    for all $k=1,\ldots,d$. Since we assume that $S_{Q,\tau_k}=S_{Q,\tau_k}^\star$, this means that
    \[
    z_k\in S_{Q,\tau_k}
    \]
    for all $k=1,\ldots,d$. We conclude that
    \begin{align*}
        S_{\mathscr Q}^\star \subset \bigcup_{\substack{\tau_1 \, , \, \ldots \, , \, \tau_d\geq 0\\
        \tau_1+\ldots+\tau_d=1}}
        \prod_{k=1}^d S_{Q,\tau_k}.
    \end{align*}
    Now suppose that $z\in\mathbb C^d$ is not in the coincidence set. This means for any $(\tau_1,\ldots,\tau_d)\in[0,1]^d$ with $\tau_1+\ldots+\tau_d=1$ that
    \begin{align*}
        \sum_{k=1}^d \check Q_{\tau_k}(z_k) \leq \check{\mathscr Q}(z) < \mathscr Q(z)
        = \sum_{k=1}^d Q(z_k).
    \end{align*}
    This means that there is at least one $\tau_k$ in any such combination such that $z_k\in \mathbb C\setminus S_{Q,\tau_k}$. We conclude that 
    \begin{align*}
        z \not\in \bigcup_{\substack{\tau_1 \, , \, \ldots \, , \, \tau_d\geq 0\\
        \tau_1+\ldots+\tau_d=1}}
        \prod_{k=1}^d S_{Q_{\tau_k}}.
    \end{align*}
    Thus it follows that
    \[
    S_{\mathscr Q}^\star = \bigcup_{\substack{\tau_1 \, , \, \ldots \, , \, \tau_d\geq 0\\
        \tau_1+\ldots+\tau_d=1}}
        \prod_{k=1}^d S_{Q_{\tau_k}}
    \]
    The droplet $S_{\mathscr Q}$ is defined as the support of the measure
    \begin{align*}
    \mathfrak{1}_{S_{\mathscr Q}^\star}(z) \prod_{k=1}^d \Delta Q(z_k) d\omega(z) &= \max_{\substack{\tau_1 \, , \, \ldots \, , \, \tau_d\geq 0\\
        \tau_1+\ldots+\tau_d=1}}
        \prod_{k=1}^d \Delta \check Q_{\tau_k}(z_k) \, d\omega(z)\\
    &= \max_{\substack{\tau_1 \, , \, \ldots \, , \, \tau_d\geq 0\\
        \tau_1+\ldots+\tau_d=1}}
        \mathfrak{1}_{S_{Q_{\tau_1}}\times \cdots \times S_{Q_{\tau_d}}}(z)
        \prod_{k=1}^d \Delta Q(z_k) \, d\omega(z),
    \end{align*}
    which, since all $Q_{\tau_k}$ are strictly subharmonic in a neighborhood of $S_{\tau_k}$, except on regions with Lebesgue measure $0$ (were one or more $\tau_k$ may be $0$), means that $S_{\mathscr Q}$ is as stated.
\end{proof}

Note that we in particular infer that $S_{\mathscr Q}=S_{\mathscr Q}^\star$ in our setting. The $[0,1]$-admissibility of the $Q_k$ implies that the droplet of $\mathscr Q$ equals the predroplet. Our next task is to describe the topological boundary of $S_{\mathscr Q}$. \\

\begin{proposition} \label{thm:dropletBoundary}
    When $Q$ is $[0,1]$-admissible we have
    \[
    \partial S_{\mathscr Q} = \,
\bigcup_{\substack{\tau_1, \, \ldots \, ,\tau_d \ge 0 \\ \tau_1+\cdots+\tau_d= 1}}
\;
\prod_{k=1}^d \partial S_{Q,\tau_k} \, .
    \]
\end{proposition}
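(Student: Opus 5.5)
We prove the two inclusions separately, starting from the description of $S_{\mathscr Q}$ in the previous proposition as a union of products $\prod_k S_{Q,\tau_k}$ over the simplex $\tau_1+\cdots+\tau_d=1$. Throughout we use three facts.
\begin{itemize}
\item The droplets are nested and strictly increasing, $S_{Q,\tau}\subset S_{Q,\tau'}$ for $\tau<\tau'$, by \eqref{eq:SQtauSeq}.
\item By $[0,1]$-admissibility each $S_{Q_k,\tau}$ is a closed Jordan domain with smooth boundary for $\tau\in(0,1+\delta)$, and $S_{Q_k,0}=\{p_{Q_k}\}$; we use the convention $\partial S_{Q_k,0}=\{p_{Q_k}\}$.
\item The boundaries move continuously and strictly outward in $\tau$, via the real-analytic dependence (Hedenmalm–Shimorin) together with the mass identity $\sigma(S_{Q_k,\tau})=\tau$ and $\Delta Q_k>0$ near the droplet.
\end{itemize}
From these we define, for each $k$, the "level function" $t_k(\zeta)=\inf\{\tau\ge0:\zeta\in S_{Q_k,\tau}\}$. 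It is continuous, equals $0$ only at $p_{Q_k}$, and satisfies $\zeta\in S_{Q_k,\tau}\iff t_k(\zeta)\le\tau$ and $\zeta\in\partial S_{Q_k,\tau}\iff t_k(\zeta)=\tau$ for $\tau\le 1$. Establishing these properties, in particular the strict monotonicity near $\tau=0$ using Lemma \ref{lem:CheckQtauTo} and Corollary \ref{cor:QtauCont01}, is the main technical obstacle.

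With $t_k$ in hand the proposition becomes transparent. We get
\[
S_{\mathscr Q}=\{z: T(z):=\textstyle\sum_k t_k(z_k)\le 1\},
\]
since $z\in\prod_k S_{Q,\tau_k}$ with $\sum\tau_k=1$ is possible iff $t_k(z_k)\le\tau_k$ for all $k$, which happens iff $\sum_k t_k(z_k)\le1$ (for the converse, distribute the slack $1-T$ among the $\tau_k$). The right-hand side of the proposition is precisely $\{T=1\}$: take $\tau_k=t_k(z_k)$.

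For the inclusion $\partial S_{\mathscr Q}\subset\{T=1\}$ we argue by contraposition. If $T(z)<1$, continuity of $T$ gives a neighbourhood of $z$ inside $S_{\mathscr Q}$, so $z$ is interior. If $T(z)>1$, then $z\notin S_{\mathscr Q}$.

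For the converse $\{T=1\}\subset\partial S_{\mathscr Q}$, take $z$ with $T(z)=1$. Then $z\in S_{\mathscr Q}$, and some $t_k(z_k)>0$, so $z_k\neq p_{Q_k}$. Move $z_k$ slightly to $\zeta$ outside $S_{Q_k,t_k(z_k)}$, which is possible since $z_k$ lies on the boundary of a Jordan domain. Then $t_k(\zeta)>t_k(z_k)$, so $T>1$ at the perturbed point, which therefore lies outside $S_{\mathscr Q}$. Hence $z$ is a boundary point.

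The key step to check carefully is that $t_k$ is continuous and that $\partial S_{Q_k,\tau}=\{t_k=\tau\}$, i.e. that the family $S_{Q_k,\tau}$ strictly increases with no boundary point remaining fixed over an interval of $\tau$. Here we would use that $\check Q_{k,\tau}<Q_k$ off $S_{Q_k,\tau}$, that the equilibrium measure has positive density $\Delta Q_k$ on the droplet so that mass strictly increases, and that the boundary curves are real-analytic in $\tau$.
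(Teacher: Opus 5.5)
Your proposal is correct and is essentially the paper's argument: both rest on the fact that a point of $S_{\mathscr Q}$ is interior exactly when the boundary levels of its coordinates (your $t_k(z_k)$) sum to less than $1$, and your function $T(z)=\sum_k t_k(z_k)$ packages this cleanly; your outward perturbation also justifies the paper's unexplained claim that an interior point admits a single $\tau$ with every $z_k\in\mathring S_{Q_k,\tau_k}$, and it makes plain that the exterior case is trivial.

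The one input that both you and the paper take for granted is strict nesting, $S_{Q_k,\tau}\subset\mathring S_{Q_k,\tau'}$ for $0<\tau<\tau'$. Mass growth plus real-analytic dependence do not give this by themselves: the discs $\{|w-s|\le s\}$, $s>0$, are nested, real-analytic in $s$ and strictly growing in area, yet all have $0$ on their boundary. It does follow from the minimum principle and Hopf's lemma applied to the function $\check Q_{k,\tau'}-\check Q_{k,\tau}\ge 0$, which is harmonic on $\mathbb C\setminus S_{Q_k,\tau'}$ and tends to $+\infty$ at infinity. Being $C^1$ with minimum value $0$ at any common boundary point, it would have vanishing gradient there, which Hopf's lemma rules out.
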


\begin{proof}
Let $z\in \partial S_{\mathscr Q}$. Since $S_{\mathscr Q}$ is closed, this implies that $z\in S_{\mathscr Q}$. Hence there exists $\tau\in[0,1]^d$ such that $\tau_1+\ldots+\tau_d=1$ and $z_k\in S_{Q,\tau_k}$ for all $k=1,\ldots,d$. In fact, since $\partial S_{Q_k,\tau_k}$ depends real-analytically smooth on $\tau_k$, we may assume that there exists a (possibly different) $\tau\in[0,1]^d$ such that $\tau_1+\ldots+\tau_d\leq 1$ and $z_k\in \partial S_{Q,\tau_k}$ for all $k=1,\ldots,d$. Suppose that $\tau_1+\ldots+\tau_d<1$. In that case, we may find $\tau^*\in[0,1]^d$ such that $\tau_k^*\geq \tau_k$ while $\tau_1^*+\ldots+\tau_d^*=1$ and $z_k\in \mathring S_{Q,\tau_k^*}$ for all $k=1,\ldots,d$.
Clearly then, we may find an open set containing $z$ that is contained in $S_{\mathscr Q}$. This implies that $z$ is not a boundary point and we have reached a contradiction. We conclude that we must have had $\tau_1+\ldots+\tau_d=1$ from the beginning. We conclude that
\[
\partial S_{\mathscr Q} \subset
    \bigcup_{\substack{\tau_1, \, \ldots \, ,\tau_d \ge 0 \\ \tau_1+\cdots+\tau_d= 1}}
\;
\prod_{k=1}^d \partial S_{Q,\tau_k} \,.
    \]
    Now consider any point $z$ that is not a boundary point. Since $S_{\mathscr Q}$ is closed, we may assume that $z$ is in the interior or exterior of $S_{\mathscr Q}$. If $z\in \mathring S_{\mathscr Q}$, we may find $\tau\in[0,1]^d$ such that $\tau_1+\ldots+\tau_d=1$ and
    \begin{align*}
        B(z_k,\delta)\subset \mathring S_{Q,\tau_k}
    \end{align*}
    for all $k=1,\ldots,d$ and some small enough $\delta>0$. Then any $\tilde\tau\neq\tau$ for which $z_k\in \partial S_{Q,\tau_k}$ necessarily satisfies $\tilde\tau_1+\ldots+\tilde\tau_d<1$, which implies that
    \[
    z\not\in \bigcup_{\substack{\tau_1, \, \ldots \, ,\tau_d \ge 0 \\ \tau_1+\cdots+\tau_d= 1}}
\;
\prod_{k=1}^d \partial S_{Q,\tau_k} \,.
    \]
    A similar argument works for the exterior. 
\end{proof}

\begin{example}
Let us consider $\mathscr Q(z)=a_1|z_1|^2+\ldots+a_d |z_d|^2$ for some constants $a_1, \ldots, a_d>0$. Then $S_{Q_k, \tau_k}= \{z_k\in\mathbb C : a_k |z_k|^2\leq \tau_k\}$ and
\[
\check Q_{k,\tau_k}(z_k) = \tau_k+\tau_k \log|z_k|^2-\tau_k\log \frac{\tau_k}{a_k}.
\]
A standard Lagrange multiplier approach then yields for large enough $|z|$
\[
\check{\mathscr Q}(z) = 1+\log(a_1|z_1|^2+\ldots+a_d|z_d|^2).
\]
We infer that
\[
S_{\mathscr Q} = \{z\in\mathbb C^d: a_1|z_1|^2+\ldots+a_d|z_d|^2\leq 1\}
\]
with $\check{\mathscr Q}(z)$ given by the preceding formula when $z\in \mathbb C\setminus S_{\mathscr Q}$ and by $\mathscr Q$ when $z\in S_{\mathscr Q}$.
\end{example}

\begin{example}
Consider the pluripotential elliptic Ginibre ensemble, for convenience scaled as $\mathscr Q(z)=\frac1{1-\tau^2}(|z|^2-\tau \mathrm{Re}\, \sum_{k=1}^d z_k^2)$. Here, as proved in \cite{AkemannDuitsMolag2023} the droplet is given by the hyperellipsoid
\[
S_{\mathscr Q} = \left\{z\in\mathbb C^d :\frac{|\mathrm{Re}\, z|^2}{(1+\tau)^2}+\frac{|\mathrm{Im}\, z|^2}{(1-\tau)^2}\leq 1\right\}.
\]
One obtains from \cite[Theorem 3.7]{Berman2009}
 combined with \cite[Proposition II.3 and Lemma V.1]{AkemannDuitsMolag2023} that
\[
\check{\mathscr Q}(z) = 
\log |\Psi(z)|^2 +2+ 2\tau \mathrm{Re}\, \frac{1}{\Psi(z)^2},
\]
on $\mathbb C^d\setminus S_{\mathscr Q}$, where
\[
\Psi(z) = \frac{|\mathrm{Re}\, z|+i|\mathrm{Im}\, z| + \sqrt{(|\mathrm{Re}\, z|+i|\mathrm{Im}\, z|)^2-4\tau}}{\sqrt 2}.
\]
(See also \cite[Proposition VI.1]{AkemannDuitsMolag2026}.) 
\end{example}

\subsection{Local edge scaling limits of the kernel}

Finally, let us investigate the weighted polynomial Bergman kernel at the edge. In the case of a factorized weight, it takes the form
\begin{align*}
\mathscr K_n(z,w)=
    e^{-\frac12n \mathscr Q(z)}e^{-\frac12n \mathscr Q(w)}\sum_{|j|<n} \mathscr P_j(z) \overline{\mathscr P_j(w)}
\end{align*}
where $|j|<n$ denotes summation over indices $j=(j_1,\ldots,j_d)\in\mathbb \{0,\ldots,n-1\}^d$ such that $|j|=j_1+\ldots+j_d<n$, and $\mathscr P_{j}(z)$ are the multivariate polynomials
\[
\mathscr P_j(z) = \prod_{k=1}^d P_{k,j_k}(z_k),
\]
where $P_{k,\ell}$ are the planar orthogonal polynomials of degree $\ell$ and positive leading coefficient satisfying the orthogonality conditions
\[
\int_{\mathbb C} P_{k,\ell}(z) \overline{P_{k,\ell'}(z)} e^{-n Q_k(z)} \, dA(z)=\delta_{\ell,\ell'}, \qquad \ell, \ell'=0,1,\ldots
\]
Note that the polynomials $\mathscr P_j(z)$ are orthonormal to each other with respect to the weight $e^{- n \mathscr Q(z)}$ on $\mathbb C^d$.\\

In the seminal paper \cite{HedenmalmWennman2021}, Hedenmalm and Wennman proved an asymptotic formula for the orthogonal polynomials $P_j:\mathbb C\to \mathbb C$ (we supress the $n$-dependence) of degree $j$ and with positive leading coefficient, satisfying the relations
\begin{align*}
\int_{\mathbb C} P_j(z) \overline{P_k(z)} e^{-n Q(z)} dA(z) = \delta_{jk},
\qquad j,k=0,1,\ldots
\end{align*}
when $Q$ is $1$-admissible. For any integer $\kappa>0$, there is an expansion formula
\[
    P_{j}(z) = n^{1/4}[\phi_{\tau}'(z)]^{1/2}[\phi_{\tau}(z)]^{j}
    e^{\frac12n\mathcal{Q}_{\tau}(z)}
    \left(\sum_{\ell=0}^{\kappa} n^{-\ell}\mathcal{B}_{\tau,\ell}(z)
    + \mathrm{O}(n^{-\kappa-1})\right),
\]
where the error term is uniform over all $z \in \mathbb{C}$ with
\[
    \mathrm{dist}_{\mathbb{C}}(z,\mathcal{S}_{Q,\tau}^{c})
    \leqslant A(n^{-1}\log n)^{1/2}
\]
as $j = \tau n \to +\infty$ along the integers such that $\tau\in(1-\epsilon,1+\epsilon)$, for some small enough $\epsilon>0$. Here $A>0$ is allowed to be any fixed constant. $\phi_\tau$ is the orthostatic (meaning $\phi_\tau(\infty)=\infty$ and $\phi_\tau'(\infty)>1$) conformal map from the exterior of the $\tau$-droplet $S_{Q,\tau}$ to the exterior of the unit disc.  $\mathcal Q_\tau$ is the bounded holomorphic function on (a neighborhood of) $\mathbb C\setminus S_ {Q,1}$ whose real part agrees with $Q$ on $\partial S_{Q,1}$ with imaginary part vanishing at infinity. The $\mathcal B_{\tau,\ell}$ are bounded holomorphic functions on some fixed neighborhood of $\mathbb C\setminus S_{Q,1}$. We shall only need the first one, which has modulus squared
\[
|\mathcal B_{\tau,\ell}(z)|^2 = \frac1{\sqrt \pi} \sqrt{\Delta Q(z)}.
\]
We thus have
\begin{align*}
\sqrt\pi e^{-n Q(z)}\left|P_{j}(z)\right|^2 = \sqrt{n \Delta Q(z)} |\phi_{\tau}'(z)||\phi_{\tau}(z)|^{2j} e^{n(\mathcal Q_\tau(z)-Q(z))} \left(1+\mathcal O(1/n)\right)
\end{align*}
uniformly for $\mathrm{dist}_{\mathbb{C}}(z,\mathcal{S}_{Q,\tau}^{c})
    \leqslant A(n^{-1}\log n)^{1/2}$, as $j = \tau n \to +\infty$ along the integers such that $\tau\in(1-\epsilon,1+\epsilon)$. Now let $\tau_0>0$. 
It is a straightforward consequence of the minimization problem \eqref{eq:MinProb} that the $1$-droplet of the planar potential $\tau_0^{-1} Q$ is given by $S_{Q,\tau_0}$. We thus have a similar expansion for $\tau_0$-admissible potentials $Q$, where we get 
\begin{align*}
\sqrt\pi e^{-n Q(z)}\left|P_{j}(z)\right|^2 
&= \sqrt\pi e^{-(\tau_0 n) \tau_0^{-1} Q(z)}\left|P_{j}(z)\right|^2\\
&= \sqrt{n \Delta Q(z)} |\phi_{\tau}'(z)||\phi_{\tau}(z)|^{2j} e^{n(\mathcal Q_\tau(z)-Q(z))} \left(1+\mathcal O(1/n)\right)
\end{align*}
uniformly for $\mathrm{dist}_{\mathbb{C}}(z,\mathcal{S}_{Q,\tau}^{c})
    \leqslant A(n^{-1}\log n)^{1/2}$, as $j = \tau n \to +\infty$ along the integers such that $\tau\in(\tau_0-\epsilon,\tau_0+\epsilon)$, perhaps with different constants $A>0$ and $\epsilon>0$. Now let $z_0\in \partial S_{Q,\tau_0}$. It was proved in \cite{HedenmalmWennman2021} that there exists some constant $c_0>0$ (independent ot $z_0$) such that for all integers $j<n-a \sqrt n\log n$
\begin{align} \label{eq:estPolySmallj}
\left|P_j\left(z_0+\frac{\vec n_{\tau_0}(z_0) \xi}{\sqrt{n \Delta Q(z_0)}}\right)\right|^2 \exp{\left(-n Q\left(z_0+\frac{\vec n_{\tau_0}(z_0) \xi}{\sqrt{n \Delta Q(z_0)}}\right)\right)}
= \mathcal O(n e^{- c_0\log^2 n}),
\end{align}
uniformly for $\xi\in\mathbb C$ with $|\xi|=\mathcal O(\sqrt{\log n})$, where the implied constant does not depend on our choice of $z_0$. This was strictly speaking proved for $a=1$, but it is easily seen to hold for any fixed $a>0$. With a similar argument, this estimate also holds for $j>n+a\sqrt n\log n$. 
 Furthermore, for all integers $\tau_0 n -a \sqrt n \log n\leq j<n$ the results in \cite[Section 5]{HedenmalmWennman2021} imply that
\begin{multline} \label{eq:nearBoundaryPoly}
\sqrt\pi\left|P_j\left(z_0+\frac{\vec n_{\tau_0}(z_0) \xi}{\sqrt{n \Delta Q(z_0)}}\right)\right|^2 \exp{\left(-n Q\left(z_0+\frac{\vec n_{\tau_0}(z_0) \xi}{\sqrt{n \Delta Q(z_0)}}\right)\right)}\\
= \sqrt{n \Delta Q_k(z_{0,k})} |\phi'_{k,\tau_k}(z_{0,k})|\exp\!\left(
  -\frac{1}{2}
  \left(
    2\,\mathrm{Re}\,\xi_k
    + (\tau_k n-j_k)\,\frac{|\phi_{k,\tau_k}'(z_{0,k})|}{\sqrt{n\,\Delta Q_k(z_{0,k})}}
  \right)^2
\right)\\
(1+ \mathcal O(\log^3 n/\sqrt n))
\end{multline}
uniformly for $|\xi|=\mathcal O(\sqrt{\log n})$. Again, one can extend such behavior to indices with $j>n+a\sqrt n\log n$. 

Let us now consider the general case $d\geq 1$. 
We shall first consider the case where $\xi=\eta\in\mathbb C^d$. By Proposition \ref{thm:dropletBoundary}, any point $z_0\in\partial S_{\mathscr Q}$ is of the form $z_0=(z_{0,1}, \ldots, z_{0,d})$ where $z_{0,k}\in \partial S_{Q,\tau_k}$ for all $k=1,\ldots,d$. Let us first consider the case that $\tau_1, \ldots, \tau_d>0$. In what follows we denote for each $k=1,\ldots,d$ by
\[
\vec n_{\tau_k}(z_{0,k}), \qquad z_{0,k} \in \partial S_{Q_k,\tau_k}
\]
the outward unit normal vector at $z_{0,k}$ on $\partial S_{Q_k,\tau_k}$. 

\begin{lemma} \label{lem:estSumallTauBigger0}
Suppose that $\mathscr Q:\mathbb C^d\to\mathbb R$ decomposes as a sum of $[0,1]$-admissible planar potentials.
Assume that $z_0\in \partial S_{Q_1,\tau_1}\times \cdots \times \partial S_{Q_d,\tau_d}$ where $\tau_1, \ldots, \tau_d>0$ and $\tau_1+\ldots+\tau_d=1$.
Let $\mathscr U(z_0)$ be the unitary matrix $\operatorname{diag}(\vec n_{\tau_1}(z_{0,1}),\ldots, \vec n_{\tau_d}(z_{0,d}))$. Then we have
    \begin{multline*}
        \frac1{\det n \partial\bar\partial\mathscr Q(z_0)} \mathscr K_n\left(z_0+\frac{\mathscr U(z_0)\xi}{\sqrt{n \partial\bar\partial\mathscr Q(z_0)}}\right)\\
        =\left(1+\mathcal O(1/n)\right) \frac1{\pi^{d/2}}
        \sum_{\substack{i_1+\ldots+i_d>0\\ -\sqrt n\log n<i_1,\ldots,i_d\leq \sqrt n \log n}} \, \prod_{k=1}^d
        \frac{|\phi'_{k,\tau_k}(z_{0,k})|}{\sqrt{n \Delta Q_k(z_{0,k})}}\\
        \exp\!\left(
  -\frac{1}{2}
  \left(
    2\,\mathrm{Re}\,\xi_k
    + i_k\,\frac{|\phi_{k,\tau_k}'(z_{0,k})|}{\sqrt{n\,\Delta Q_k(z_{0,k})}}
  \right)^2
\right)
    \end{multline*}
    uniformly for all $\xi\in\mathbb C^d$ with $|\xi|=\mathcal O(\sqrt{\log n})$.
\end{lemma}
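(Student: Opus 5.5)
Since the potential splits as $\mathscr Q(z)=\sum_k Q_k(z_k)$, the weighted kernel on the diagonal factors into a sum over multi-indices $|j|<n$ of products $\prod_{k=1}^d e^{-nQ_k(w_k)}|P_{k,j_k}(w_k)|^2$. Here $w=z_0+\mathscr U(z_0)\xi/\sqrt{n\partial\bar\partial\mathscr Q(z_0)}$. Because $\partial\bar\partial\mathscr Q(z_0)=\operatorname{diag}(\Delta Q_k(z_{0,k}))$ and $\mathscr U(z_0)$ is diagonal, the $k$-th coordinate is $w_k=z_{0,k}+\vec n_{\tau_k}(z_{0,k})\xi_k/\sqrt{n\Delta Q_k(z_{0,k})}$. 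This is exactly the planar configuration in \eqref{eq:estPolySmallj} and \eqref{eq:nearBoundaryPoly}, applied to the potential $Q_k$ at the boundary point $z_{0,k}\in\partial S_{Q_k,\tau_k}$ with $\tau_0=\tau_k>0$. The normalising factor $\det n\partial\bar\partial\mathscr Q(z_0)=\prod_k n\Delta Q_k(z_{0,k})$ also factors, so the whole quantity becomes a sum over $j$ of products of one-variable rescaled densities.

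\textbf{Step 1: reindexing.} Set $i_k=\tau_k n-j_k$; this requires some care with rounding, since $\tau_k n$ need not be an integer. One either absorbs a bounded shift into the error term or chooses the integer nearest to $\tau_kn$. Since $\sum_k\tau_k=1$, the constraint $|j|<n$ becomes $i_1+\cdots+i_d>0$.

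\textbf{Step 2: the window.} On the window $|i_k|\le\sqrt n\log n$ for all $k$, apply \eqref{eq:nearBoundaryPoly} (and its extension to $j_k$ slightly beyond $\tau_kn$) in each coordinate. Dividing the $k$-th factor by $n\Delta Q_k(z_{0,k})$ gives
\[
\frac{1}{\sqrt\pi}\frac{|\phi'_{k,\tau_k}(z_{0,k})|}{\sqrt{n\Delta Q_k(z_{0,k})}}\exp\Big(-\tfrac12\big(2\mathrm{Re}\,\xi_k+i_k\tfrac{|\phi'_{k,\tau_k}(z_{0,k})|}{\sqrt{n\Delta Q_k(z_{0,k})}}\big)^2\Big)
\]
times a relative error. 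Taking the product over $k$ yields the summand in the statement, with the prefactor $\pi^{-d/2}$.

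\textbf{Step 3: indices outside the window.} When some $|i_k|>\sqrt n\log n$, that factor is $\mathcal O(ne^{-c_0\log^2 n})$ by \eqref{eq:estPolySmallj} and its large-$j$ counterpart. The other factors must be bounded uniformly. I would use the global bound $e^{-nQ}|P_j|^2\lesssim n$, which follows from the extremal property/Berman's estimate since $\check Q_{k,j/n}\le Q_k$. There are at most $n^d$ indices, so these terms contribute $\mathcal O(n^{C}e^{-c_0\log^2n})$ in total, which is superpolynomially small.

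\textbf{Step 4: absorbing the tail error.} The tail must be absorbed relative to the main sum, which requires a polynomial lower bound on the main sum. Roughly $\sqrt n$ indices on each side have Gaussian factors of order one when $|\xi|=\mathcal O(\sqrt{\log n})$, so the main sum is at least of order $e^{-C\log n}$. The superpolynomially small tail is therefore negligible against it.

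\textbf{Main obstacle.} The delicate point is the uniformity of the error term, which the lemma asserts as $1+\mathcal O(1/n)$. The per-coordinate asymptotic \eqref{eq:nearBoundaryPoly} only carries an error $1+\mathcal O(\log^3n/\sqrt n)$, and these errors multiply over $d$ coordinates. I would therefore first try to obtain the $\mathcal O(1/n)$ error directly from the Hedenmalm–Wennman expansion $e^{-nQ}|P_j|^2=\sqrt{n\Delta Q}\,|\phi_\tau'||\phi_\tau|^{2j}e^{n(\mathcal Q_\tau-Q)}(1+\mathcal O(1/n))$. In that approach the Gaussian form arises only after Taylor expanding $|\phi_\tau|^{2j}e^{n(\mathcal Q_\tau-Q)}$ in $\tau$ and $z$, and that expansion is where the $\log^3n/\sqrt n$ error enters. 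If this cannot be avoided, the honest outcome is the weaker error $\mathcal O(\log^3 n/\sqrt n)$, which still suffices for Theorem \ref{thm:generalMain}. Whichever error is obtained, it must be uniform in $i$ and in $z_0$, with the implied constants independent of $\tau_k$ on compact subsets of $(0,1]$.
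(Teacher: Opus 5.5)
Your outline is the paper's proof. You factor the diagonal kernel, discard every multi-index with some $|j_k-\tau_kn|>\sqrt n\log n$ using \eqref{eq:estPolySmallj} and the $\mathcal O(n)$ bound on each weighted polynomial, insert \eqref{eq:nearBoundaryPoly} in each coordinate, and relabel $i_k=\lceil\tau_kn\rceil-j_k$. Your Step 4 turns the additive $\mathcal O(e^{-c\log^2n})$ remainder into a relative error via a polynomial lower bound on the main sum; the paper passes over this point.

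Your concern about the error term is justified, and you should not try to reach $1+\mathcal O(1/n)$. The paper simply writes this after inserting \eqref{eq:nearBoundaryPoly}, but with a Gaussian summand it is false. Take $Q=|z|^2$, $d=1$, $\xi=0$. The left-hand side is $e^{-n}\sum_{j<n}n^j/j!=\frac12-\frac{1}{3\sqrt{2\pi n}}+\cdots$. The Gaussian sum, with the correct constant $(2\pi)^{-1/2}$ (see below), is $\frac12-\frac{1}{2\sqrt{2\pi n}}+\cdots$. The discrepancy comes from the $\tau$-dependence of $|\phi_\tau'|$ and the cubic terms in the exponent, that is, precisely the Taylor expansion you pointed to.

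The rounding in $i_k=\lceil\tau_kn\rceil-j_k$ causes a similar loss. It moves the constraint $i_1+\cdots+i_d>0$ by up to $d-1$ layers, and these lie in the bulk of the sum, not its tail. So rounding also costs a relative error of order $n^{-1/2}$ up to logarithms, confirming that your ``absorb a bounded shift'' option only works with the weaker error. The provable statement is therefore the one with $1+\mathcal O(\log^3n/\sqrt n)$, which is exactly what Theorem \ref{thm:generalMain} uses.

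You also copied a slip from the paper: the per-coordinate constant should be $(2\pi)^{-1/2}$, as in \eqref{eqtildePbehav}, not the $\pi^{-1/2}$ of \eqref{eq:nearBoundaryPoly}. With $\pi^{-d/2}$, Corollary \ref{cor:SumToInt} would come out multiplied by $2^{d/2}$.
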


\begin{proof}
By the discussion surrounding \eqref{eq:estPolySmallj}, we may exclude terms such that $\tau_k n-\sqrt n \log n<j_n<\tau_k n+\sqrt n \log n$ from the sum defining the correlation kernel, assuming $n$ is big enough. 
Since the number of such terms is clearly less than $n^d$, and each individual weighted polynomial is bounded by the kernel, and hence $\mathcal O(n)$, we find that
\begin{multline*}
\mathscr K_n\left(z_0+\frac{\mathscr U(z_0)\xi}{\sqrt{n \partial\bar\partial \mathscr Q(z_0)}}\right) = \mathcal O(e^{- c \log^2n}) +
\sum_{\substack{j_1+\ldots+j_d<n\\ |j_k-\tau_k n| \leq\sqrt n\log n}}
\prod_{k=1}^d
\\
\left|P_{j_k}\left(z_{0,k}+\frac{\vec n_{\tau_k}(z_{0,k}) \xi_k}{\sqrt{n \Delta Q_k(z_{0,k})}}\right)\right|^2 \exp{\left(-n Q_k\left(z_{0,k}+\frac{\vec n_{\tau_k}(z_{0,k}) \xi_k}{\sqrt{n \Delta Q_k(z_{0,k})}}\right)\right)}
\end{multline*}
for some suitably chosen $c>0$, uniformly for $|\xi|=\mathcal O(\sqrt{\log n})$. Inserting the behavior \eqref{eq:nearBoundaryPoly} we see that
\begin{multline*}
\mathscr K_n\left(z_0+\frac{\mathscr U(z_0)\xi}{\sqrt{n \partial\bar\partial \mathscr Q(z_0)}}\right) = \mathcal O(e^{- c \log^2n}) +\\
\left(1+\mathcal O(1/n)\right) \frac1{\pi^{d/2}}
\sum_{\substack{j_1+\ldots+j_d<n\\ |j_k - \tau_k n|\leq \sqrt n\log n}}
\prod_{k=1}^d
        \frac{|\phi'_{k,\tau_k}(z_{0,k})|}{\sqrt{n \Delta Q_k(z_{0,k})}}\\
        \exp\!\left(
  -\frac{1}{2}
  \left(
    2\,\mathrm{Re}\,\xi_k
    + (\tau_k n - j_k)\,\frac{|\phi_{k,\tau_k}'(z_{0,k})|}{\sqrt{n\,\Delta Q_k(z_{0,k})}}
  \right)^2
\right)
    \end{multline*}
    uniformly for all $\xi\in\mathbb C^d$ with $|\xi|=\mathcal O(\sqrt{\log n})$.
Relabelling $i_k=\lceil\tau_k n\rceil-j_k$ we obtain the result. (Note that any missed or added index due to the rounding gives an error of order $e^{-c\log^2n}$ for some $c>0$.)
\end{proof}

This multidimensional sum is seen to be a Riemann sum. Effectively, we replace
\[
\frac{|\phi_{k,\tau_k}'(z_{0,k})|}{\sqrt{n\,\Delta Q_k(z_{0,k})}}
\to x_k, \qquad k=1,\ldots,d,
\]
and we obtain a multidimensional integral over the polytope that is bounded by the boundary of the hypercube $[-1,1]^d$ and the plane $x_1+\ldots+x_d=0$. Explicitly, the error terms can be expressed as integrals over the faces of the polytope \cite{BerlineVergne2007, GuilleminSternberg2007}, and in our case the important thing is that
\begin{multline} \label{eq:EulerMac}
\sum_{\substack{i_1+\ldots+i_d>0\\-\sqrt n\log n\leq i_1,\ldots,i_d\leq \sqrt n\log n}}
\prod_{k=1}^d
        \frac{|\phi'_{k,\tau_k}(z_{0,k})|}{\sqrt{n \Delta Q_k(z_{0,k})}}\\
        \exp\!\left(
  -\frac{1}{2}
  \left(
    2\,\mathrm{Re}\,\xi_k
    + i_k\,\frac{|\phi_{k,\tau_k}'(z_{0,k})|}{\sqrt{n\,\Delta Q_k(z_{0,k})}}
  \right)^2
\right)\\
= \int_{x_1+\ldots+x_d\geq 0} \exp\left(-\frac12\left(2 \mathrm{Re} \xi_k+x_k\right)^2\right) dx_1 \cdots dx_d\\
+ \mathcal O\left(\frac{e^{- 2 |\mathrm{Re}\,\xi|^2}}{\sqrt n}\right)
\end{multline}
uniformly for $|\xi|=\mathcal O(n^{\frac12-\epsilon})$ for any fixed $\epsilon>0$ as $n\to\infty$, and certainly for $|\xi|=\mathcal O(\sqrt{\log n})$. Here the implied constant can be taken independently of $z_0$, which follows from the continuity of $\phi_{k,\tau_k}'(z_0)$ and $\Delta Q_k(z_{0,k})$ and the compactness of $\partial S_{\mathscr Q}$. 
We analyze this integral explicitly in Lemma \ref{lem:GaussianErfc} in Appendix \ref{sec:A}. Combining \eqref{eq:EulerMac} with Lemma \ref{lem:GaussianErfc}, we get the following corollary. 

\begin{corollary} \label{cor:SumToInt}
Suppose that $\mathscr Q:\mathbb C^d\to\mathbb R$ decomposes as a sum of $[0,1]$-admissible planar potentials.
Assume that $z_0\in \partial S_{Q_1,\tau_1}\times \cdots \times \partial S_{Q_d,\tau_d}$ where $\tau_1, \ldots, \tau_d>0$ and $\tau_1+\ldots+\tau_d=1$.
Let $\mathscr U(z_0)$ be the unitary matrix 
$\operatorname{diag}(\vec n_{\tau_1}(z_{0,1}),\ldots, \vec n_{\tau_d}(z_{0,d}))$. Then we have as $n\to\infty$ that
    \begin{multline*}
        \frac1{\det n \partial\bar\partial\mathscr Q(z_0)} \mathscr K_n\left(z_0+\frac{\mathscr U(z_0)\xi}{\sqrt{n \partial\bar\partial\mathscr Q(z_0)}}\right)\\
        =
        \frac12 \mathrm{erfc}\left(\sqrt 2 \mathrm{Re} \, \sum_{k=1}^d \frac{\xi_k}{\sqrt {d}}\right)
        + \mathcal O\left(\frac{e^{-2 |\mathrm{Re}\,\xi|^2}}{\sqrt n}\right)
    \end{multline*}
    uniformly for all $\xi\in\mathbb C^d$ with $|\xi|=\mathcal O(\sqrt{\log n})$. 
\end{corollary}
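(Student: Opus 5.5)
The plan is to start from Lemma \ref{lem:estSumallTauBigger0} and pass from the Riemann sum to the integral via \eqref{eq:EulerMac}. The only new work is to evaluate the limiting Gaussian integral over the half-space and to track the normalization.

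First, note the scaling. With $\mathscr U(z_0)=\operatorname{diag}(\vec n_{\tau_k}(z_{0,k}))$, the complex Hessian $\partial\bar\partial\mathscr Q(z_0)$ is diagonal with entries $\Delta Q_k(z_{0,k})$. Hence the argument of $\mathscr K_n$ factorizes coordinatewise as $z_{0,k}+\vec n_{\tau_k}(z_{0,k})\xi_k/\sqrt{n\Delta Q_k(z_{0,k})}$, which is exactly the form used in \eqref{eq:nearBoundaryPoly}. By Lemma \ref{lem:estSumallTauBigger0}, the normalized kernel equals $(1+\mathcal O(1/n))\pi^{-d/2}$ times the lattice sum on the left-hand side of \eqref{eq:EulerMac}.

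Next, write $h_k=|\phi'_{k,\tau_k}(z_{0,k})|/\sqrt{n\Delta Q_k(z_{0,k})}$. The summand is $\prod_k h_k\, g_k(i_kh_k)$ with $g_k(x)=e^{-\frac12(2\mathrm{Re}\,\xi_k+x)^2}$, summed over the lattice $\prod_k h_k\mathbb Z$, truncated to the cube $[-h_k\sqrt n\log n,h_k\sqrt n\log n]$ and restricted to $\sum_k i_k>0$.

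I would justify \eqref{eq:EulerMac} directly. Since $h_k\sqrt n\log n\asymp\log n$ and $|\mathrm{Re}\,\xi|=\mathcal O(\sqrt{\log n})$ with a suitable constant, removing the truncation costs $\mathcal O(e^{-c\log^2 n})$. The remaining discrepancy between sum and integral has two sources:
\begin{itemize}
\item the midpoint/Euler--Maclaurin error in the interior, which is $\mathcal O(h^2)$ per unit mass;
\item the boundary layer along the hyperplane where the half-space constraint $\sum_k x_k\ge 0$ is seen only through the lattice.
\end{itemize}
The boundary layer is the dominant term. It is of size $h\asymp n^{-1/2}$ times the surface integral of the Gaussian over $\{\sum x_k=0\}$, and that surface integral is $\lesssim e^{-2(\sum_k\mathrm{Re}\,\xi_k)^2/d}$. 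This yields the stated $\mathcal O(n^{-1/2})$ error, with a weight decaying in the relevant direction. I expect this step to be the main obstacle. The hyperplane $\sum x_k=0$ is not generally a lattice hyperplane of $\prod h_k\mathbb Z$ with commensurate spacings, so the face contribution must be bounded rather than computed exactly. The plan is to sum first in one variable, say $i_1$, using the one-dimensional Euler--Maclaurin bound with a cutoff at $i_1>-\sum_{k\ge2}i_k$. This gives an $\mathcal O(h_1)$ boundary term times the remaining Gaussian, and the remaining $(d-1)$-dimensional Riemann sum is then handled by the same bound. Uniformity in $z_0$ follows from continuity of $\phi'_{k,\tau_k}$ and $\Delta Q_k$ on the compact boundary.

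Finally, evaluate the integral via Lemma \ref{lem:GaussianErfc}. Substitute $y_k=x_k+2\mathrm{Re}\,\xi_k$ to get a standard Gaussian in $\mathbb R^d$ over $\{\sum y_k\ge 2\sum_k\mathrm{Re}\,\xi_k\}$. Rotating so that $v=d^{-1/2}(1,\dots,1)$ is a coordinate axis turns this into $(2\pi)^{d/2}\cdot\frac12\mathrm{erfc}\bigl(\sqrt2\,\mathrm{Re}\sum_k\xi_k/\sqrt d\bigr)$. The factor $(2\pi)^{d/2}$ combines with the prefactor $\pi^{-d/2}$, and with the $2^{-d/2}$ coming from the normalization $dA=\pi^{-1}dx\,dy$ implicit in \eqref{eq:nearBoundaryPoly}, to give exactly $\frac12\mathrm{erfc}(\cdot)$. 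I would double-check this constant against the $d=1$ Hedenmalm--Wennman result. Multiplying the $(1+\mathcal O(1/n))$ factor through the bounded main term then gives the corollary.
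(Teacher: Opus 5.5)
Your route is the paper's route: Lemma~\ref{lem:estSumallTauBigger0}, then \eqref{eq:EulerMac}, then Lemma~\ref{lem:GaussianErfc}. However, your passage from the lattice sum to the half-space $\{\sum_k x_k\ge 0\}$ is wrong. The same identification is made in \eqref{eq:EulerMac}, on which the paper's one-line proof rests, so following the paper does not rescue it.

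Write $h_k=c_k/\sqrt n$ with $c_k=|\phi'_{k,\tau_k}(z_{0,k})|/\sqrt{\Delta Q_k(z_{0,k})}$. Under $x_k=i_kh_k$, the constraint $\sum_k i_k>0$ becomes $\sum_k x_k/c_k>0$. This is $\{x\cdot v>0\}$ with $v=(c_1^{-1},\dots,c_d^{-1})$, not $\{\sum_k x_k>0\}$. Your own iterated summation with cutoff $i_1>-\sum_{k\ge 2}i_k$ produces exactly this region. Your worry that the boundary is not a lattice hyperplane is a symptom of the mislabeling: $\{\sum_k i_k=0\}$ is a lattice hyperplane. Lemma~\ref{lem:GaussianErfc} with $A=I$, $b=2\,\mathrm{Re}\,\xi$ and this $v$ gives the main term $\frac12\mathrm{erfc}\bigl(\sqrt2\,\mathrm{Re}\sum_k w_k\xi_k\bigr)$, where $w=v/|v|$. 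This agrees with the claimed $\frac12\mathrm{erfc}\bigl(\sqrt2\,\mathrm{Re}\sum_k\xi_k/\sqrt d\bigr)$ only if $c_1=\dots=c_d$.

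No proof can close this, because the displayed statement is false in general. Take $d=2$, $\mathscr Q(z)=|z|^2$ and $z_0=(3^{-1/2},(2/3)^{1/2})$. Then $\tau=(\frac13,\frac23)$, $\mathscr U(z_0)=\partial\bar\partial\mathscr Q(z_0)=I$ and $c_k=\tau_k^{-1/2}$. By \eqref{eq:kernelDimReduction}, $n^{-2}\mathscr K_n(z)=e^{-n|z|^2}\sum_{m<n}(n|z|^2)^m/m!$, which is the probability that a Poisson variable of mean $n|z|^2$ is $<n$. With $z=z_0+\xi/\sqrt n$, the central limit theorem gives $\frac12\mathrm{erfc}\bigl(\sqrt2\,\mathrm{Re}(3^{-1/2}\xi_1+(2/3)^{1/2}\xi_2)\bigr)$. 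This is precisely the $w$-formula with $w_k=\sqrt{\tau_k}$. At $\xi=(1,-1)$ it is about $0.68$, whereas the corollary predicts $\frac12$.

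What is true is the $w$-version. The symmetric form requires replacing $\mathscr U(z_0)$ by $\mathscr U(z_0)R$, with $R$ real orthogonal and $R\,d^{-1/2}(1,\dots,1)^T=w$. That suffices for Theorem~\ref{thm:generalMain}(ii), but it is not this corollary.

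Two smaller points.
\begin{itemize}
\item The prefactor should be $(2\pi)^{-d/2}$ rather than $\pi^{-d/2}$. Compare \eqref{eqtildePbehav}, or require that the normalized density tend to $1$ deep in the bulk. The $dA$ convention is not where a factor $2^{-d/2}$ comes from.
\item Your boundary-layer bound decays only in the normal component $w\cdot\mathrm{Re}\,\xi$, so it does not yield the stated weight $e^{-2|\mathrm{Re}\,\xi|^2}$. No argument can: at $z_0=(2^{-1/2},2^{-1/2})$ and $\xi=(s,-s)$ in the same example, the normalized kernel is the probability that a Poisson variable of mean $n+2s^2$ is $<n$. For $1\le s\le\sqrt{\log n}$ this deviates from $\frac12$ by an amount of exact order $s^2/\sqrt n$, far larger than $e^{-4s^2}/\sqrt n$.
\end{itemize}
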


\begin{proof}
We let $A$ be the $d\times d$ identity matrix and $b=2 \, \mathrm{Re} \, \xi\in \mathbb R^d$ in \text{Lemma \ref{lem:GaussianErfc}}.
\end{proof}

The remaining cases exhibit a certain \textit{bulk degeneracy}: one or more coordinates of $z_0\in\partial S_{\mathscr Q}$ may be in $S_{Q_k,0}=\{p_{Q_k}\}$. Then $p_{Q_k}$ is an interior point of $S_{Q_k,\tau}$ for all $\tau>0$ but becomes a boundary point for $\tau=0$. 
Still in the diagonal setting $\xi=\eta$, we now turn to the case where several of the $\tau_k$ might be $0$. This situation has to be treated with care. 
A key role is played by Theorem \ref{thm:PartialBerg} which applies to $[0,1]$-admissible potentials $Q:\mathbb C\to\mathbb R$. It implies that for any nonnegative integer $m_n$ of order $\sqrt n \log n$ 
\begin{align*}
\frac{e^{-n Q(p_Q+\xi/\sqrt{n \Delta Q(p_Q)})}}{\sqrt{n \Delta Q(p_Q)}} \sum_{j=0}^{m_n} \left|P_j\left(p_Q+\frac{\xi}{\sqrt{n \Delta Q(p_Q)}}\right)\right|^2
= 1+\mathcal O\left(\frac{\log n}{\sqrt n}\right)
\end{align*}
uniformly for $\xi\in\mathbb C$ with $|\xi|=\mathcal O(\sqrt{\log n})$ as $n\to\infty$, where the implied constant depends only on $Q$.  
By translation, we assume henceforth without loss of generality that
\[
p_{Q_k} = 0, \qquad k=1,\ldots,d.
\]
Assume that $\tau_1, \ldots, \tau_\ell>0$ and $\tau_{\ell+1}, \ldots, \tau_d=0$. 

\begin{lemma} \label{lem:bulkDegen}
Suppose that $\mathscr Q:\mathbb C^d\to\mathbb R$ decomposes as a sum of $[0,1]$-admissible planar potentials.
For each $z_0\in\partial S_{\mathscr Q}$, there exists a unitary matrix $\mathscr U(z_0)$ such that as $n\to\infty$
    \begin{multline*}
        \frac1{\det n \partial\bar\partial\mathscr Q(z_0)} \mathscr K_n\left(z_0+\frac{\mathscr U(z_0)\xi}{\sqrt{n \partial\bar\partial\mathscr Q(z_0)}}\right)\\
        =
        \frac12 
        \mathrm{erfc}\left(\sqrt 2 \mathrm{Re} \, \sum_{k=1}^d \frac{\xi_k}{\sqrt {d}}\right) 
        \left(1+\mathcal O\left(\frac{\log n}{\sqrt n}\right)\right)
    \end{multline*}
    uniformly for all $\xi\in\mathbb C^d$ with $|\xi|=\mathcal O(\sqrt{\log n})$, where the implied constant is independent of $z_0$.
\end{lemma}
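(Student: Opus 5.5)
We plan to reduce Lemma \ref{lem:bulkDegen} to the setting of Lemma \ref{lem:estSumallTauBigger0} and Corollary \ref{cor:SumToInt}, replacing each degenerate coordinate by the partial Bergman sums of Theorem \ref{thm:PartialBerg}. By Proposition \ref{thm:dropletBoundary}, write $z_0=(z_{0,1},\dots,z_{0,d})$ with $z_{0,k}\in\partial S_{Q_k,\tau_k}$, $\tau_1,\dots,\tau_\ell>0$ and $\tau_{\ell+1}=\dots=\tau_d=0$, so that $z_{0,k}=p_{Q_k}=0$ for $k>\ell$. The case $\ell=d$ is Corollary \ref{cor:SumToInt}. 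Its error $\mathcal O(e^{-2|\mathrm{Re}\,\xi|^2}/\sqrt n)$ becomes relative once we divide by $\frac12\mathrm{erfc}(\sqrt2\,\mathrm{Re}\sum_k\xi_k/\sqrt d)$. Indeed, for $|\xi|=\mathcal O(\sqrt{\log n})$ the erfc factor decays at most like $e^{-2(\mathrm{Re}\sum\xi_k)^2/d}\geq e^{-2|\mathrm{Re}\,\xi|^2}$ by Cauchy--Schwarz, up to polynomial factors in $\log n$. So we may assume $\ell<d$.

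For the degenerate case we first pick the unitary matrix. We take $\mathscr U(z_0)=\operatorname{diag}(\vec n_{\tau_1}(z_{0,1}),\dots,\vec n_{\tau_\ell}(z_{0,\ell}),1,\dots,1)$, composed with a unitary rotation $V$ acting only on the last $d-\ell$ coordinates. We choose $V$ so that the final erfc argument comes out symmetric in all $d$ variables. Concretely, the computation below yields $\mathrm{erfc}$ of a linear form in $\mathrm{Re}\,\xi_1,\dots,\mathrm{Re}\,\xi_\ell$ only, and we must then redistribute this form over all coordinates with a real orthogonal (hence unitary) change of variables mapping $\frac1{\sqrt\ell}(1,\dots,1,0,\dots,0)$ to $\frac1{\sqrt d}(1,\dots,1)$. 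This is the reason the lemma only asserts existence of some $\mathscr U(z_0)$. Because $\partial\bar\partial\mathscr Q(z_0)$ is diagonal, the scaling $\sqrt{n\partial\bar\partial\mathscr Q(z_0)}$ commutes with the diagonal phases. We apply the rotation to the already normalized variables, so that the Gaussian rescaling is absorbed correctly.

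Next we split the kernel sum over $|j|<n$. For $k>\ell$, the estimate \eqref{eq:estPolySmallj} (applied with $\tau_0$ near $0$, or more simply via the bulk decay of the planar kernel near the minimum) shows that indices with $j_k>\sqrt n\log n$ are negligible. For $k\leq\ell$ we restrict to $|j_k-\tau_kn|\leq\sqrt n\log n$ as in Lemma \ref{lem:estSumallTauBigger0}. After relabeling $i_k=\lceil\tau_k n\rceil-j_k$ for $k\le\ell$, the constraint $|j|<n$ becomes $i_1+\dots+i_\ell> j_{\ell+1}+\dots+j_d$. Since $\sum_{k>\ell}j_k=\mathcal O(\sqrt n\log n)$ while the $i_k$ are measured on the scale $\sqrt n$, the sum over the degenerate indices would a priori shift the boundary of the polytope. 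This is where we need care, and we expect it to be the main obstacle. Our fix is to sum first over the $j_k$, $k>\ell$, for a fixed threshold. Theorem \ref{thm:PartialBerg} with $m_n=\mathcal O(\sqrt n\log n)$ gives the partial sums as $\sqrt{n\Delta Q_k(0)}(1+\mathcal O(\log n/\sqrt n))$. However, the threshold depends on $i_1+\dots+i_\ell$, which shows that the degenerate coordinates effectively contribute to the cutoff. We therefore treat each individual $|P_{k,j}(0+\cdot)|^2e^{-nQ_k}$ for small $j$ as Ginibre-like terms $\frac{|\zeta|^{2j}}{j!}e^{-|\zeta|^2}$. At scale $|\zeta|=\mathcal O(\sqrt{\log n})$ these are concentrated at $j=\mathcal O(\log n)=o(\sqrt n)$. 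So the shift $\sum_{k>\ell}j_k$ is $o(\sqrt n)$ with overwhelming probability weight, and it moves the Riemann-sum boundary only by $\mathcal O(\log n/\sqrt n)$ in the rescaled variables.

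With this in hand, the remaining sum is the $\ell$-dimensional Riemann sum of Lemma \ref{lem:estSumallTauBigger0} and \eqref{eq:EulerMac}. It converges to $\frac12\mathrm{erfc}(\sqrt2\,\mathrm{Re}\sum_{k\le\ell}\xi_k/\sqrt\ell)$ by Lemma \ref{lem:GaussianErfc}, with relative error $\mathcal O(\log n/\sqrt n)$ coming from the boundary shift and from Theorem \ref{thm:PartialBerg}. The degenerate coordinates contribute the factor $\prod_{k>\ell}1$ after normalizing by $n\Delta Q_k(0)$. Finally, applying the orthogonal rotation $V$ described above turns the argument into $\sqrt2\,\mathrm{Re}\sum_{k=1}^d\xi_k/\sqrt d$ while preserving $|\xi|=\mathcal O(\sqrt{\log n})$. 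Uniformity in $z_0$ follows from compactness of $\partial S_{\mathscr Q}$ and continuity of $\phi'_{k,\tau}$ and $\Delta Q_k$. This continuity must also hold as $\tau_k\downarrow0$, which we would control using Corollary \ref{cor:QtauCont01} together with Theorem \ref{thm:PartialBerg}, whose constants depend only on $Q_k$.
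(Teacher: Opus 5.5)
Your skeleton matches the paper's. You use Lemma~\ref{lem:estSumallTauBigger0} for the coordinates with $\tau_k>0$, Theorem~\ref{thm:PartialBerg} for those with $\tau_k=0$, and a final real rotation. The two routes differ in how the degenerate indices interact with the cutoff $|j|<n$, and here your version is the one that works. The paper sandwiches. Its upper bound, which completes the degenerate sums, is fine. Its lower bound keeps $j_{\ell+1},\dots,j_d<\epsilon\sqrt n\log n$ and then replaces the cutoff by $j_1+\dots+j_\ell<(1-\epsilon d)n$. At $z_0$ this discards every tuple with $|j_k-\tau_kn|\le\sqrt n\log n$ for all $k\le\ell$, so by \eqref{eq:estPolySmallj} that lower bound is only $\mathcal O(e^{-c\log^2 n})$. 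Even the milder cutoff $n-\epsilon d\sqrt n\log n$ would move the polytope boundary by order $\log n$ in rescaled units. Your observation fixes this: the degenerate weights live on $j_k=\mathcal O(\log n)$, so $s=\sum_{k>\ell}j_k$ shifts the boundary only by $\mathcal O(\log n/\sqrt n)$.

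Several repairs are still needed.

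\emph{Justifying the concentration.} Individual Ginibre-type asymptotics for $P_{k,j}$ are not available, and \eqref{eq:estPolySmallj} does not apply at $p_{Q_k}$. You do not need either. Keep the paper's upper bound. For the lower bound, truncate the degenerate indices at $m_n=C\log n$, with $C$ large enough that $|\xi_k|\le r_{Q_k}\sqrt{m_n}$. Then Theorem~\ref{thm:PartialBerg} makes each truncated degenerate sum $1+\mathcal O(\log n/n)$. The $\ell$-dimensional sum $\Sigma_\ell(s)$ over $i_1+\dots+i_\ell>s$ decreases in $s$, so the lower bound is $\Sigma_\ell(s_{\max})\big(1-\mathcal O(\tfrac{\log n}{n})\big)$ with $s_{\max}=\mathcal O(\log n)$.

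\emph{The error rate.} A shift $\mathcal O(\log n/\sqrt n)$ inside $\mathrm{erfc}(y)$ with positive $y\asymp\sqrt{\log n}$ costs a relative error $\mathcal O(\log^{3/2}n/\sqrt n)$, not $\mathcal O(\log n/\sqrt n)$. The paper's own inputs, e.g.\ \eqref{eq:nearBoundaryPoly}, do not give the stated rate either.

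\emph{The direction of the erfc.} This error is inherited from \eqref{eq:EulerMac} and Corollary~\ref{cor:SumToInt}. The meshes $h_k=|\phi'_{k,\tau_k}(z_{0,k})|/\sqrt{n\Delta Q_k(z_{0,k})}$ depend on $k$. With $x_k=i_kh_k$, the constraint is therefore $\sum_k x_k/h_k>s$. Lemma~\ref{lem:GaussianErfc} then gives $\tfrac12\mathrm{erfc}\big(\sqrt2\,\mathrm{Re}\,\zeta\cdot\hat w+s/(\sqrt2|w|)\big)$, where $\zeta$ are the variables of Lemma~\ref{lem:estSumallTauBigger0}, $w=(h_1^{-1},\dots,h_\ell^{-1},0,\dots,0)$ and $\hat w=w/|w|$. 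For $\mathscr Q(z)=|z|^2$ one gets $\hat w=(\sqrt{\tau_1},\dots,\sqrt{\tau_\ell},0,\dots,0)$. This is not along $(1,\dots,1,0,\dots,0)$ unless the $\tau_k$ coincide.

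\emph{The rotation.} Take $\mathscr U=\mathscr D\mathscr R$, where $\mathscr D$ is your diagonal matrix of normals and $\mathscr R$ is real orthogonal with $\mathscr R^{T}\hat w=\tfrac1{\sqrt d}(1,\dots,1)$. This is needed also when $\ell=d$, where $\mathscr U=\mathscr D$ alone is wrong. $\mathscr R$ must mix all coordinates: a rotation of only the last $d-\ell$ coordinates, as in your first description, fixes $\hat w$. Since the lemma only asserts that some unitary exists, the statement survives these corrections.

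What remains a genuine gap is uniformity in $z_0$ near points with some $\tau_k=0$. As $\tau_k\downarrow0$, $S_{Q_k,\tau_k}$ shrinks to $p_{Q_k}$ and $|\phi'_{k,\tau_k}(z_{0,k})|\asymp\tau_k^{-1/2}$ blows up. \eqref{eq:nearBoundaryPoly} is only available for $\tau$ near a fixed $\tau_0>0$, and Theorem~\ref{thm:PartialBerg} is a bulk statement at $p_{Q_k}$. Neither covers the regime $\tau_k\to0$, $\tau_kn\to\infty$. Continuity of $\check Q_\tau$ (Corollary~\ref{cor:QtauCont01}) says nothing about these error terms. So compactness plus continuity does not give a constant independent of $z_0$. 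The paper relies on the same remark, so this gap is shared. Closing it needs planar edge asymptotics that are uniform as $\tau\downarrow0$, or a separate argument for that regime.
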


\begin{proof}
We may assume that $\ell>0$. 
Assume that $z_0\in \partial S_{Q,\tau_1}\times \cdots \times \partial S_{Q,\tau_d}$ where $\tau_1+\ldots+\tau_d=1$, and (without loss of generality) $\tau_1, \ldots, \tau_m>0$ and $\tau_{m+1}=\cdots=\tau_d=0$. We shall denote $\vec n_0(z_{0,k})=1$ in what follows, i.e., for the indices $k=\ell+1,\ldots,d$. 
We have an obvious upper bound
\begin{multline*}
\sum_{j_1+\ldots+j_d<n} \prod_{k=1}^d
\left|P_{j_k}\left(z_{0,k}+\frac{\vec n_{\tau_k}(z_{0,k}) \xi_k}{\sqrt{n \Delta Q_k(z_{0,k})}}\right)\right|^2\\ 
\exp{\left(-n Q_k\left(z_{0,k}+\frac{\vec n_{\tau_k}(z_{0,k}) \xi_k}{\sqrt{n \Delta Q_k(z_{0,k})}}\right)\right)}\\
\leq 
\sum_{j_1+\ldots+j_\ell<n} \prod_{k=1}^\ell
\left|P_{j_k}\left(z_{0,k}+\frac{\vec n_{\tau_k}(z_{0,k}) \xi_k}{\sqrt{n \Delta Q_k(z_{0,k})}}\right)\right|^2\\ 
\exp{\left(-n Q_k\left(z_{0,k}+\frac{\vec n_{\tau_k}(z_{0,k}) \xi_k}{\sqrt{n \Delta Q_k(z_{0,k})}}\right)\right)}
\end{multline*}
(i.e., we simply bound the sums over $j_{\ell+1}, \ldots, j_d$ by the full sums from $0$ to $\infty$, which equal $1$.)
On the other hand, for any fixed $0<\epsilon<1/d$, we have the lower bound
\begin{multline*}
\sum_{j_1+\ldots+j_d<n}
\left|P_{j_k}\left(z_{0,k}+\frac{\vec n_{\tau_k}(z_{0,k}) \xi_k}{\sqrt{n \Delta Q_k(z_{0,k})}}\right)\right|^2 \exp{\left(-n Q_k\left(z_{0,k}+\frac{\vec n_{\tau_k}(z_{0,k}) \xi_k}{\sqrt{n \Delta Q_k(z_{0,k})}}\right)\right)}\\
\geq 
\sum_{\substack{j_{\ell+1},\ldots,j_d\\<\epsilon \sqrt n\log n}}
\sum_{\substack{j_1+\ldots+j_\ell\\<n-(j_{\ell+1}+\ldots+j_d)}} \prod_{k=1}^n
\left|P_{j_k}\left(z_{0,k}+\frac{\vec n_{\tau_k}(z_{0,k}) \xi_k}{\sqrt{n \Delta Q_k(z_{0,k})}}\right)\right|^2\\
\exp{\left(-n Q_k\left(z_{0,k}+\frac{\vec n_{\tau_k}(z_{0,k}) \xi_k}{\sqrt{n \Delta Q_k(z_{0,k})}}\right)\right)}
\end{multline*}
For any fixed index $(j_1,\ldots,j_\ell)$ with $0\leq j_1,\ldots,j_\ell<\epsilon \sqrt n\log n$ we have for the inner sum
\begin{multline*}
\sum_{\substack{j_1+\ldots+j_\ell\\<n-(j_{\ell+1}+\ldots+j_d)}} \prod_{k=1}^\ell
\left|P_{j_k}\left(z_{0,k}+\frac{\vec n_{\tau_k}(z_{0,k}) \xi_k}{\sqrt{n \Delta Q_k(z_{0,k})}}\right)\right|^2\\ 
\exp{\left(-n Q_k\left(z_{0,k}+\frac{\vec n_{\tau_k}(z_{0,k}) \xi_k}{\sqrt{n \Delta Q_k(z_{0,k})}}\right)\right)}\\
\geq
\sum_{\substack{j_1+\ldots+j_\ell\\<(1-\epsilon d)n}} \prod_{k=1}^\ell
\left|P_{j_k}\left(z_{0,k}+\frac{\vec n_{\tau_k}(z_{0,k}) \xi_k}{\sqrt{n \Delta Q_k(z_{0,k})}}\right)\right|^2\\ 
\exp{\left(-n Q_k\left(z_{0,k}+\frac{\vec n_{\tau_k}(z_{0,k}) \xi_k}{\sqrt{n \Delta Q_k(z_{0,k})}}\right)\right)}.
\end{multline*}
We already know how to estimate these sums. Namely, by Lemma \ref{lem:estSumallTauBigger0} and Corollary \ref{cor:SumToInt}, applied for $\ell$ instead of $d$, and $(1-\epsilon d)n$ instead of $n$, we have
\begin{multline*}
\sum_{\substack{j_1+\ldots+j_\ell\\<(1-\epsilon)n}} \prod_{k=1}^\ell
\left|P_{j_k}\left(z_{0,k}+\frac{\vec n_{\tau_k}(z_{0,k}) \xi_k}{\sqrt{n \Delta Q_k(z_{0,k})}}\right)\right|^2\\ 
\exp{\left(-n Q_k\left(z_{0,k}+\frac{\vec n_{\tau_k}(z_{0,k}) \xi_k}{\sqrt{n \Delta Q_k(z_{0,k})}}\right)\right)}\\
= \frac12 \mathrm{erfc}\left(\sqrt 2 \, \mathrm{Re} \sum_{k=1}^\ell \frac{\xi_k}{\sqrt{\ell}}\right)+\mathcal O\left(\frac{e^{-2 |\mathrm{Re}\,\xi|^2}}{\sqrt n}\right)
\end{multline*}
uniformly for $|\xi|=\mathcal O(\sqrt{\log n})$ as $n\to\infty$, where the constant implied by the $\mathcal O$ term is independent of $(j_1,\ldots,j_\ell)$. Now applying Theorem \ref{thm:PartialBerg} to the remaining sums over $j_1,\ldots,j_\ell$ we finally get the lower bound
\begin{multline*}
\sum_{j_1+\ldots+j_d<n}
\left|P_{j_k}\left(z_{0,k}+\frac{\vec n_{\tau_k}(z_{0,k}) \xi_k}{\sqrt{n \Delta Q_k(z_{0,k})}}\right)\right|^2 \exp{\left(-n Q_k\left(z_{0,k}+\frac{\vec n_{\tau_k}(z_{0,k}) \xi_k}{\sqrt{n \Delta Q_k(z_{0,k})}}\right)\right)}\\
\geq \left(1-C\frac{\log n}{\sqrt n}\right)
\left(\frac12 \mathrm{erfc}\left(\sqrt 2 \, \mathrm{Re} \sum_{k=1}^\ell \frac{\xi_k}{\sqrt{\ell}}\right)-C\frac{e^{-2|\mathrm{Re}\,\xi|^2}}{\sqrt n}\right)\\
\geq \left(1-C'\frac{\log n}{\sqrt n}\right)
\frac12 \mathrm{erfc}\left(\sqrt 2 \, \mathrm{Re} \sum_{k=1}^\ell \frac{\xi_k}{\sqrt{\ell}}\right)
\end{multline*}
uniformly for $|\xi|=\mathcal O(\sqrt{\log n})$ as $n\to\infty$, for some constants $C, C'>0$.
Now let $\mathscr D(z_0)=\operatorname{diag}(\vec n_{\tau_1}(z_{0,1}),\ldots, \vec n_{\tau_{\ell}}(z_{0,\ell}),1,\ldots,1))$. What we have proved at this point is that
\begin{multline*}
        \frac1{\det n \partial\bar\partial\mathscr Q(z_0)} \mathscr K_n\left(z_0+\frac{\mathscr D(z_0)\xi}{\sqrt{n \partial\bar\partial\mathscr Q(z_0)}}\right)\\
        =
        \frac12 
        \mathrm{erfc}\left(\sqrt 2 \mathrm{Re} \, \sum_{k=1}^\ell \frac{\xi_k}{\sqrt {d}}\right) 
        \left(1+\mathcal O\left(\frac{\log n}{\sqrt n}\right)\right)
    \end{multline*}
    uniformly for all $\xi\in\mathbb C^d$ with $|\xi|=\mathcal O(\sqrt{\log n})$, where the implied constant is independent of $z_0$.
For the final step, let $\mathscr R(z_0)$ be the unitary (rotation) matrix that sends the unit vector $\frac1{\sqrt{\ell}}(1,\ldots,1,0,\ldots,0)$ to $\frac1{\sqrt d} (1, \ldots, 1)$. The result follows when we take the unitary matrix $\mathscr U(z_0)=\mathscr D(z_0) \mathscr R(z_0)$.
\end{proof}

In principle, the approach considered above, using the Euler-Maclaurin formula can be used for the off-diagonal case as well. For example in \cite{MarzoMolagOrtegaCerda2026} it was shown that, uniformly for $\xi,\eta\in\mathbb C$ and $\alpha\in\mathbb R$, we have
    \begin{multline*}
        \sum_{j=1}^{m_n} \frac{|\phi_1'(z_0)|}{2\sqrt{n\Delta Q(z_0)}}
     e^{-\left(\mathrm{Re} \, \xi+\frac{j|\phi_1'(z_0)|}{2\sqrt{n\Delta Q(z_0)}}\right)^2
     -\left(\mathrm{Re} \, \eta+\frac{j|\phi_1'(z_0)|}{2\sqrt{n\Delta Q(z_0)}}\right)^2+i\sqrt n \alpha \frac{j|\phi_1'(z_0)|}{2\sqrt{n\Delta Q(z_0)}}}\\
     = \int_0^{\infty} e^{-(\mathrm{Re} \, \xi+t)^2-(\mathrm{Re} \, \eta+t)^2+i\sqrt n \alpha t} dt
-\frac{|\phi_1'(z_0)|}{4\sqrt{n\Delta Q(z_0)}} e^{-(\mathrm{Re} \, \xi)^2-(\mathrm{Re} \, \eta)^2}\\
     + \mathcal O\Big(\frac{(|\mathrm{Re} \, \xi|+|\mathrm{Re}\,\eta|+\sqrt n|\alpha|+\log n)^2}{n}\log n\Big).
    \end{multline*}
    where $\alpha$ can be expressed with the help of the conformal map $\phi_1$ and is in general nonzero when $\xi\neq\eta$.\footnote{We correct here for a typo in Lemma 4.2 in \cite{MarzoMolagOrtegaCerda2026}.} Note a similar derivation in \cite{Charlier2025}. However, this approach becomes significantly more technical in the setting $d>1$.

It is cleaner and somewhat more satisfying to extend our results by a polarization argument.

\begin{proof}[Proof of Theorem \ref{thm:generalMain}]
For $k=1,\ldots,d$, we define the functions
    \[
    K_{n,k}(z,w) = e^{-\frac12 Q_k(z)} e^{-\frac12 Q_k(w)} \sum_{j=0}^{n-1} P_{k,j}(z) \overline{P_{k,j}(w)},
    \qquad z,w\in\mathbb C,
    \]
    where $P_{k,j}$ are the planar orthogonal polynomials corresponding to $Q_k$, and
    \[
    K^{\#}_{n,k}(z,w) = \partial_1 \bar\partial_2 Q_k(z,\overline z) e^{n Q_k(z,\overline w)},
    \qquad z,w\in\mathbb C,
    \]
    where $Q_k(\cdot, \cdot)$ denotes the polarization.
    It was proved in \cite{AmeurKangMakarov2019} that the functions
    \begin{align*}
        \psi_{n,k}(z,w)=\frac{K_{n,k}(z,w)}{K^{\#}_{n,k}(z,w)}
    \end{align*}
    are Hermitian-analytic. Then this is also true for the function
    \begin{align*}
        \Psi_n(z,w) = \frac{\mathscr K_n(z,w)}{K_{n,1}(z_1,w_1)\cdots K_{n,d}(z_d,w_d)}
        \prod_{k=1}^d \psi_{n,k}(z_d,w_d).
    \end{align*}
    (Note that the weight factors cancel in the quotient involving $\mathscr K_n$.) Using Cauchy-Schwarz, it is clear that the expression is locally bounded when we rescale variables. Hence by Vitali's theorem we know that
    \[
    \lim_{n\to\infty} \frac1{\det n \partial\bar\partial\mathscr Q(z_0)} \Psi_n\left(z_0+\frac{\mathscr U(z_0)\xi}{\sqrt{n \partial\bar\partial\mathscr Q(z_0)}} ,z_0+\frac{\mathscr U(z_0) \eta}{\sqrt{n \partial\bar\partial\mathscr Q(z_0)}}\right)=\Psi(\xi,\eta),
    \]
    where $\Psi$ is some Hermitian-analytic function in a neighborhood of the diagonal $\xi=\eta$. When $\xi=\eta$ we already know that $\Psi(\xi,\xi)=\mathrm{erfc}(\sqrt 2 \mathrm{Re} \xi)$. By analytic continuation we must then have $\Psi(\xi,\eta)=\mathrm{erfc} \frac{\xi+\overline\eta}{\sqrt 2}$. Since the convergence on the diagonal holds for $|\xi|=\mathcal O(\sqrt{\log n})$, the off-diagonal convergence holds for $|\xi|,|\eta|=\mathcal O(\sqrt{\log n})$. (One may simply rescale the variables of the above functions by a factor $\sqrt{\log n}$.)
    The missing factor, the pluricomplex Ginibre kernel, follows simply by a Taylor expansion of $n \mathscr Q$. We have proved that \eqref{eq:con2} holds uniformly for $z_0\in\partial S_{\mathscr Q}$ and $|\xi|,|\eta|=\mathcal O(\sqrt{\log n})$ as $n\to\infty$. 
    
    The remaining part, equation \eqref{eq:con1}, now follows from Proposition \ref{lem:unitVector}. 
\end{proof}

\section{Rotational symmetric weights} \label{sec:3}

In this section we assume that $\mathscr Q:\mathbb C^d\to\mathbb R$ is of the form
\[
\mathscr Q(z)=V(|z|),
\]
where $V$ is supposed to satisfy certain growth and regularity conditions. In particular, since we want $\mathscr Q$ to satisfy \eqref{eq:growthCond}, we necessarily have
\begin{align} \label{eq:growthCondV}
    \liminf_{r\to\infty} \frac{V(r)}{\log r} > 2.
\end{align}
We shall also assume that $V$ is $C^2$ on $(0,\infty)$, that $r\mapsto r V'(r)$ is strictly increasing and that
\begin{align} \label{eq:rV'}
\lim_{r\to 0} r V'(r) = 0.
\end{align}
Note that there is no issue in the integrals defining the orthogonality relations in \eqref{eq:defOP}, since \eqref{eq:rV'} implies that $e^{-n V(r)}=\mathcal O(r^\alpha)$ for any fixed $\alpha\in(-1,0)$ as $r\to 0$.
These conditions are equivalent to the conditions of Theorem \ref{thm:generalMain2}, as the reader may verify.
Additionally, it will be convenient to introduce the planar potential $Q:\mathbb C\to\mathbb R$ defined by $Q(z)=V(|z|)$.
As mentioned in \cite{AkemannByunEbke2023} (see also \cite{SaffTotik1997}) the conditions imply that the droplet $S_Q$ is simply connected, i.e., a disk, and $S_Q=S_Q^\star$. It is explicitly given by
\[
S_{Q} = \{z\in\mathbb C : |z| V'(|z|)\leq 2\}.
\]
Without loss of generality, we impose the normalizing condition
\[
V'(1)=2,
\]
which implies that $S_Q$ is the unit disk. We shall prove in Proposition \ref{eq:rotationalCheckQ} below that, as expected, one finds
\[
\check{\mathscr Q}(z) = \check Q(|z|), \qquad z\in\mathbb C^d,
\]
and the droplet is given by
\[
S_{\mathscr Q} = \{z\in\mathbb C^d : |z|\leq 1\},
\]
the closed $2d$-dimensional unit ball in $\mathbb C^d$. 
Henceforth, we let $z_0\in\partial S_{\mathscr Q}$, i.e., $|z_0|=1$.

\subsection{Local edge scaling limits of the kernel}

First, we derive some identities for $\partial \bar\partial \mathscr Q$ and related expressions that are needed for the local scaling limits. 
With straightforward calculations, one may show that
\begin{align} \label{eq:MongeAmpereRot}
\partial\bar\partial \mathscr Q(z) &= \frac{V'(|z|)}{2|z|}\mathbb I+\frac{|z| V''(|z|)-V'(|z|)}{4|z|^3} z z^\dagger.
\end{align}
We can express $(\partial\bar\partial \mathscr Q(z))^{-1/2}$ explicitly, using the rank $1$ structure. For $|z_0|=1$ we get 
\begin{align} \nonumber
(\partial\bar\partial \mathscr Q(z_0))^{-1/2} &= \left(\mathbb I+\frac{V''(1)-2}{4} z_0 z_0^\dagger\right)^{-1/2}\\ \label{eq:MApowerMinus12}
&= \mathbb I + \left(\left(\frac{V''(1)+2}4\right)^{-1/2}-1\right) z_0 z_0^\dagger
= \mathbb I + \left(\frac1{\sqrt{\Delta Q(1)}}-1\right) z_0 z_0^\dagger.
\end{align}
Using that $z z^\dagger$ has rank $1$, one may also derive explicitly that
\begin{align} \label{eq:rotationalMAmeas}
\det \partial\bar\partial\mathscr Q(z) &= \frac{(V'(|z|))^{d-1}\big(|z| V''(|z|)+V'(|z|)\big)}{2^{d+1} |z|^{d}}
\end{align}
which is positive under the conditions that we put on $V$, except possibly in $z=0$.  Hence, $\mathscr Q$ is strictly subharmonic on $\mathbb C^d$, and indeed, strictly plurisubharmonic. 
Note in particular that
\begin{align*}
\det \partial\bar\partial\mathscr Q(z_0) &= \Delta Q(|z_0|), \qquad |z_0|=1.
\end{align*}
Thus, under the conditions we put on $V$, for $|z_0|=1$, the complex Hessian has $d-1$ eigenvalues $1$ and one eigenvalue $\Delta Q(|z_0|)>0$, hence is strictly positive definite.\\

Let us now focus on the corresponding orthogonal polynomials.
In this section we shall use multi-index notation, i.e., if $j=(j_1,\ldots,j_d)\in (\mathbb Z_{\geq 0})^d$, then $z^j = z_1^{j_1} \cdots z_d^{j_d}$ and $j! = j_1! \cdots j_d!$. Furthermore, we denote $|j|=j_1+\ldots+j_d$ (when it is clear that $j$ is to be interpreted as a multi-index).

\begin{lemma} \label{lem:BasisRadial}
    Suppose that $\mathscr Q(z)=V(|z|)$, where $V:[0,\infty)\to\mathbb R$ is a continuous function satisfying \eqref{eq:growthCondV} and
    $$\lim_{r\to 0} r V'(r) = 0.$$
    An orthonormal basis of polynomials with respect to $e^{-n \mathscr Q(z)} d\omega(z)$ is given by
    \[
    \mathscr P_j(z) = \frac1{\sqrt{j! h_{|j|}}} z^j, \qquad j\in (\mathbb Z_{\geq 0})^d,
    \]
    where, for $j=0,1,\ldots$, we have
    \[
    h_{j} = \frac{2}{\Gamma(j+d)} \int_0^\infty r^{2d-1+2j} e^{-n V(r)} dr. 
    \]
\end{lemma}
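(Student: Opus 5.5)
The proof rests on the rotational invariance of the weight: it forces the monomials $z^j$ to be mutually orthogonal, so the whole lemma reduces to computing $\|z^j\|^2$ in polar coordinates. Concretely, I would show three things: every monomial has finite norm, distinct monomials are orthogonal, and
\[
\int_{\mathbb C^d} |z^j|^2 e^{-nV(|z|)}\, d\omega(z) = j!\, h_{|j|}.
\]
Monomials of degree $<n$ span the polynomials of degree $<n$, so normalizing them gives the stated orthonormal basis.

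\emph{Orthogonality.} For $\theta\in\mathbb R^d$, apply the change of variables $z\mapsto (e^{i\theta_1}z_1,\ldots,e^{i\theta_d}z_d)$. It is unitary, so it preserves both $|z|$ and $d\omega$. Under it the integrand $z^j\overline{z^k}e^{-nV(|z|)}$ picks up the factor $e^{i(j-k)\cdot\theta}$. The integral is therefore invariant under multiplication by $e^{i(j-k)\cdot\theta}$ for every $\theta$, so it vanishes whenever $j\neq k$. Absolute convergence, needed to justify this, comes from the growth condition \eqref{eq:growthCondV} at infinity and from $e^{-nV(r)}=\mathcal O(r^{\alpha})$ near $r=0$, as noted just before the lemma. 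With the factor $r^{2d-1+2j}$ present, the integral converges at $0$.

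\emph{Norm computation.} This step needs care with constants. I would first evaluate the same integral with the Gaussian weight $e^{-|z|^2}$ in place of $e^{-nV(|z|)}$. That integral factorizes over coordinates, and with $dA=\pi^{-1}dx\,dy$ it equals $\prod_k j_k! = j!$. In polar coordinates on $\mathbb C^d=\mathbb R^{2d}$ it also splits as an angular constant $c_j$ times the radial integral
\[
\int_0^\infty r^{2|j|+2d-1}e^{-r^2}\,dr=\tfrac12\Gamma(|j|+d).
\]
The angular constant $c_j=\int_{S^{2d-1}}|\zeta^j|^2\,d\sigma(\zeta)$ (with the measure normalized compatibly with $d\omega$) does not depend on the weight. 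Comparing the two expressions gives
\[
c_j=\frac{2\, j!}{\Gamma(|j|+d)}.
\]
Applying the same polar splitting with the weight $e^{-nV(r)}$ then yields
\[
\|z^j\|^2=\frac{2\,j!}{\Gamma(|j|+d)}\int_0^\infty r^{2d-1+2|j|}e^{-nV(r)}\,dr = j!\,h_{|j|}.
\]

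The main obstacle is keeping the normalizations of $d\omega$ and of the sphere measure consistent. Calibrating against the Gaussian case sidesteps this entirely, since the unknown angular constant is fixed by a computation that is already known.
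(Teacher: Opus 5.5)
Your proof is correct and follows the paper's route. The norm is computed exactly as in the paper: you split into a radial integral times a weight-independent angular factor and calibrate that factor against the Gaussian weight, which gives $\int_{\mathbb S^{2d-1}}|s^j|^2\,d\Omega(s)=2\pi^d j!/\Gamma(|j|+d)$.

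The one difference is the orthogonality step. You use the torus action $z\mapsto(e^{i\theta_1}z_1,\ldots,e^{i\theta_d}z_d)$ directly, whereas the paper infers that $\int_{\mathbb S^{2d-1}}s^j\overline{s}^k\,d\Omega(s)=0$ for $j\neq k$ from the known orthogonality in the Gaussian case. Your version is slightly more self-contained, since that Gaussian fact rests on the same symmetry.
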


\begin{proof}
    Any $z\in\mathbb C^d$ can be written as $z=r s$, where $r=|z|$ and $s\in \mathbb S^{2d-1}$. We denote by $d\Omega(s)$ the standard volume form on $\mathbb S^{2d-1}$. With these notations, we may write
    \begin{align*}
    \int_{\mathbb C^d} z^j \overline{z^k} e^{-n\mathscr Q(z)} d\omega(z)
    &= \frac1{\pi^d}\int_0^\infty r^{2d-1+|j|+|k|} e^{-n V(r)} dr \, \int_{\mathbb S^{2d-1}} s^j \overline s^k \, d\Omega(s).
\end{align*}
    We claim that the right-most integral is nonzero if and only if $j=k$ (all multi-index components must match). To prove this claim, let us consider the particular model with $\mathscr Q(z)=|z|^2$, or equivalently, $V(r)=r^2$. In that case it is a known fact that
    \begin{align*}
        \delta_{j,j'} &= \int_{\mathbb C^d} \prod_{k=1}^d n^{j_k+j'_k+1}\frac{z^{j_k} \overline{z^{j'_k}}}{\sqrt{j_k!j'_k!}} e^{-n |z_k|^2} dA(z_{k})\\
        &= n^{\frac{|j|+|j'|}2+d}\int_{\mathbb C^d} \frac{z^j}{\sqrt{j'!}} \frac{\overline{z^k}}{\sqrt{k!}} e^{-n |z|^2} d\omega(z)\\
        &= \frac{n^{\frac{|j|+|j'|}2+d}}{\pi^d\sqrt{j!j'!}} \int_0^\infty r^{|j|+|j'|+2d-1} e^{-n r^2} dr \int_{\mathbb S^{2d-1}} s^j \overline s^k \, d\Omega(s).
    \end{align*}
    Whatever the integral over $r$ is, it has a positive value, therefore the integral over $\mathbb S^{2d-1}$ is nonzero only when $j=k$. In fact, calculating the integral over $r$, we infer that
    \begin{align} \label{eq:BasisRadial}
    \int_{\mathbb S^{2d-1}} s^j \overline s^k \, d\Omega(s)=
    \begin{cases}
    2\pi^d
        \displaystyle\frac{j!}{\Gamma(|j|+d)}, & j=k,\\
        0, & j\neq k.
    \end{cases}
    \end{align}
\end{proof}
We note that an alternative proof of the orthogonality can be found in \cite[Lemma 2.2]{Tian1990}.
 
Using the orthonormal basis given by the monomials, we have
\begin{align} \nonumber
    \mathscr K_n(z,w) &= e^{-\frac12n(V(|z|)+V(|w|))}\sum_{j=0}^{n-1} \sum_{|j'|=j} \frac1{h_j} \frac{z^{j'} \overline w^{j'}}{j'!}\\ \label{eq:kernelDimReduction}
    &= e^{-\frac12 n (V(|z|)+V(|w|))} \sum_{j=0}^{n-1} \frac1{h_j} \frac{(z\cdot w)^j}{j!},
\end{align}
which can be shown by using the generating function $e^{s (z\cdot w)} = e^{s z_1\overline w_1} \cdots e^{s z_d\overline w_d}$.
We can neatly relate this model to the planar model with potential $Q:\mathbb C\to\mathbb R$ given by $Q(z)=V(|z|)$. This planar model has a basis of planar orthogonal polynomials given by
\[
P_j(z) = \frac1{\sqrt{j! h_{j-d+1}}} z^j, \qquad j=0,1,\ldots
\]
In what follows, let us use the notation
\[
\tilde P_j(z) = z^{-d+1} P_{j+d-1}(z).
\]
It follows from \cite[equation (5.8)]{HedenmalmWennman2021} that as $n\to\infty$
\begin{multline} \label{eqtildePbehav}
    \left|\tilde P_j\left(1+\frac{\xi}{\sqrt{n \Delta Q(1)}}\right)\right|^2 e^{-n V\left(1+\frac{\xi}{\sqrt{n \Delta Q(1)}}\right)}\\= 
    \frac1{\sqrt{2\pi}} \sqrt{n \Delta Q(1)} \exp\left(-\frac12\left(2\mathrm{Re} \xi + \frac{n-j}{\sqrt{n\Delta Q(1)}}\right)^2\right)
    \left(1+\mathcal O\left(\frac{\log^3 n}{\sqrt n}\right)\right)
\end{multline}
uniformly for $\xi\in\mathbb C$ with $|\xi|=\mathcal O(\sqrt{\log n})$ and $n-\sqrt n \log n\leq j<n$, where the implied constant can be picked independent of $z_0$.
\cite{HedenmalmWennman2021} considers the case that $Q$ is $1$-admissible, but \eqref{eqtildePbehav} is also true in the less restrictive (though roational symmetric) setting of \cite{AkemannByunEbke2023}, e.g., see the proof of Theorem 1.2 in \cite{AkemannByunEbke2023} where the norm of the $P_j$ is estimated. Hence, \eqref{eqtildePbehav} is valid in the setting of Theorem \ref{thm:generalMain2}.

\begin{proposition} \label{eq:rotationalCheckQ}
    Let $\mathscr Q(z)=V(|z|)$, where $V$ is assumed to be $C^2$ on $(0,\infty)$,  $r\mapsto r V'(r)$ is strictly increasing on $(0,\infty)$, $V'(1)=2$, and
    \[
    \lim_{r\to 0} r V'(r)=0. 
    \] 
    Then we have $\partial S_{\mathscr Q}=\{z\in\mathbb C^d : |z|\leq 1\}$ and $\check{\mathscr Q}(z)=\check Q(|z|)$.
\end{proposition}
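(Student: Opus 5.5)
Note first that the statement writes $\partial S_{\mathscr Q}$, but the intended claim is clearly $S_{\mathscr Q}=\{z\in\mathbb C^d : |z|\leq 1\}$ together with $\check{\mathscr Q}(z)=\check Q(|z|)$; this is what I plan to prove. Here $\check Q$ is the planar obstacle function of $Q(z)=V(|z|)$, with droplet the closed unit disk. I will show two inequalities, $\check Q(|z|)\le\check{\mathscr Q}(z)$ and $\check{\mathscr Q}(z)\le\check Q(|z|)$, and then read off the coincidence set.

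\textbf{Lower bound.} The planar obstacle function is explicit in the radial setting. It equals $V(r)$ for $r\le 1$ and $V(1)+2\log r$ for $r\ge 1$. The function $q(z)=\check Q(|z|)$ is then $C^{1,1}$ across $|z|=1$, since $V'(1)=2$. To see that $q\in\mathcal L(\mathbb C^d)$:
\begin{itemize}
\item On $|z|<1$, $q=\mathscr Q$ is plurisubharmonic by \eqref{eq:rotationalMAmeas} and \eqref{eq:MongeAmpereRot}. The eigenvalues there are $V'(r)/(2r)>0$ and $(rV''+V')/(4r)\geq 0$, the latter because $rV'$ is increasing.
\item On $|z|>1$, $q=V(1)+\log|z|^2$ is plurisubharmonic.
\end{itemize}
Plurisubharmonicity is local. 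At $|z|=1$ the function $q$ equals the maximum of the two plurisubharmonic functions $V(1)+\log|z|^2$ and $\mathscr Q$ truncated appropriately; equivalently, $r\mapsto \check Q(r)$ is a convex increasing function of $\log r$. It has logarithmic growth and satisfies $q\le\mathscr Q$, since $V(r)\ge V(1)+2\log r$ for $r\ge1$ (the function $rV'$ is increasing with $V'(1)=2$). Hence $q\le\check{\mathscr Q}$.

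\textbf{Upper bound.} Take any $u\in\mathcal L(\mathbb C^d)$ with $u\le\mathscr Q$. Fix a unit vector $s\in\mathbb S^{2d-1}$ and restrict $u$ to the complex line $\zeta\mapsto \zeta s$. The function $v(\zeta)=u(\zeta s)$ is subharmonic on $\mathbb C$ (or identically $-\infty$), satisfies $v(\zeta)\le\log|\zeta|^2+\mathcal O(1)$, and satisfies $v(\zeta)\le \mathscr Q(\zeta s)=V(|\zeta|)=Q(\zeta)$. By maximality of the planar obstacle function, $v\le\check Q$. Since every $z\ne0$ has the form $|z|\,s$, this gives $u(z)\le\check Q(|z|)$; at $z=0$ the same argument applies directly. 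Taking the supremum over $u$ gives $\check{\mathscr Q}\le \check Q(|\cdot|)$.

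\textbf{Droplet.} Combining the two bounds, $\check{\mathscr Q}(z)=\check Q(|z|)$. The coincidence set is then $\{|z|\le1\}$, because $V(r)>V(1)+2\log r$ for $r>1$ by strict monotonicity of $rV'$. By \eqref{eq:rotationalMAmeas} the Monge–Ampère density is positive on $0<|z|\le1$, so the support of $\mathfrak 1_{S^\star}\det\partial\bar\partial\mathscr Q\,d\omega$ is the whole closed ball. The main point needing care is the plurisubharmonicity of the glued function across the unit sphere. I would handle it by writing $q(z)=\phi(\log|z|^2)$ with $\phi$ convex and nondecreasing, so that $q$ is plurisubharmonic, since $\log|z|^2$ is plurisubharmonic.
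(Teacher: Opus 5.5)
Your proof is correct. It follows the same overall plan as the paper: show that $\check Q(|z|)$ is an admissible competitor, then prove maximality by reducing to the planar obstacle problem. Both steps, however, are carried out more carefully than in the paper.

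\textbf{Admissibility.} For plurisubharmonicity the paper only says that $\check Q(|z|)$ is subharmonic ``and hence plurisubharmonic'', which is not a valid implication on $\mathbb C^d$. Your representation $q=\phi(\log|z|^2)$ is exactly what is needed. For $t<0$ one has $\phi'(t)=\tfrac12 rV'(r)$ with $r=e^{t/2}$, and this is positive and nondecreasing. It equals $1$ at $t=0$ because $V'(1)=2$. Hence $\phi$ is convex and nondecreasing on $\mathbb R$.

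\textbf{Maximality.} The paper asserts that every competitor must be rotationally symmetric. That is false for an arbitrary competitor; only the maximal function is radial, by uniqueness. The paper then reduces to a radial planar function. You instead restrict an arbitrary $u\in\mathcal L(\mathbb C^d)$ with $u\le\mathscr Q$ to the complex lines $\zeta\mapsto\zeta s$. This needs no symmetry assumption on $u$, only that restrictions of plurisubharmonic functions to complex lines are subharmonic. It is the clean way to run the paper's reduction.

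\textbf{Two small corrections.}
\begin{itemize}
\item Drop the remark that $q$ is a maximum of $V(1)+\log|z|^2$ and a truncation of $\mathscr Q$. Convexity of $\phi$ gives $\mathscr Q\ge V(1)+\log|z|^2$ everywhere, so that maximum is simply $\mathscr Q$. Rely only on the convexity argument.
\item Strict monotonicity of $rV'$ gives only $(rV')'\ge 0$, so the Monge--Amp\`ere density is not positive everywhere on $0<|z|\le 1$. It is positive off a union of spheres $\{|z|\in Z\}$, where $Z$ is closed with empty interior. That is still enough to make its support the whole closed ball. The paper glosses over the same point.
\end{itemize}
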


\begin{proof}
We first show that $\check{\mathscr Q}(z)=\check Q(|z|)$ is indeed the obstacle function. We clearly have
\[
\check{\mathscr Q}(z)=\check Q(|z|)\leq Q(|z|)=\mathscr Q(|z|),
\]
and as $|z|\to\infty$
\[
\check{\mathscr Q}(z)=\check Q(|z|)\leq \log |z|^2+\mathcal O(1).
\]
Furthermore, since $\check Q$ is subharmonic, it follows that $\check{\mathscr Q}$ is also subharmonic and hence plurisubharmonic. It remains to show that $\check{\mathscr Q}$ is maximal with these properties. Let $q:\mathbb C^d\to [-\infty,\infty)$ be another function with these properties. In our setting $q$ is necessarily rotational symmetric (if not, $q$ would not be unique by permutation of variables). Hence $q(z)=v(|z|)$ for some $v:[0,\infty)\to [-\infty,\infty)$. Let $h:\mathbb C\to[-\infty,\infty)$ be defined by $h(z)=v(|z|)$. We claim that $h$ is subharmonic. Indeed, we see that for any $r>0$
\[
\frac1{2\pi} \int_0^{2\pi} h(r e^{i t}) dt = h(r)\geq h(r).
\]
Hence $q(z)=h(|z|)\leq \check Q(|z|)$. We conclude that $\check{\mathscr Q}(z)=\check Q(|z|)$ for all $z\in\mathbb C^d$.

The conditions we put on $V$ force $S_Q=S_Q^\star$ to be the closed unit disk, hence $S_{\mathscr Q}^\star=\{z\in\mathbb C^d : |z|\leq 1\}$. The condition that $(r V'(r))'>0$ in combination with \eqref{eq:rotationalMAmeas} force the Monge-Ampère measure to be strictly positive on the predroplet, hence $S_{\mathscr Q}=S_{\mathscr Q}^\star$.
\end{proof}

\begin{proposition} \label{prop:rotationalKerDiag}
    Let $\mathscr Q(z)=V(|z|)$, where $V$ is assumed to be $C^2$ on $(0,\infty)$,  $r\mapsto r V'(r)$ is strictly increasing on $(0,\infty)$, $V'(1)=2$, and
    \[
    \lim_{r\to 0} r V'(r)=0. 
    \] 
     We have as $n\to\infty$ that
    \begin{multline*}
        \frac1{\det n\partial\bar\partial \mathscr Q(z_0)}\mathscr K_n\left(z_0+\frac{\xi \vec n(z_0)}{\sqrt{n \det \partial\bar\partial\mathscr Q(z_0)}}\right)\\
        = \frac12\mathrm{erfc}\left(\sqrt 2 \mathrm{Re} \xi\right) \left(1+\mathcal O\left(\frac{\log n}{\sqrt n}\right)\right)
    \end{multline*}
    uniformly for $z_0\in\mathbb C^d$ with $|z_0|=1$ and  $\xi\in\mathbb C$ with $|\xi|=\mathcal O(\sqrt{\log n})$.
\end{proposition}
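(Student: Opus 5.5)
The plan is to reduce the $d$-dimensional diagonal kernel to a one-variable sum and then invoke \eqref{eqtildePbehav}. By \eqref{eq:kernelDimReduction}, on the diagonal we have
\[
\mathscr K_n(z)=e^{-nV(|z|)}\sum_{j=0}^{n-1}\frac{|z|^{2j}}{h_j\, j!},
\]
so $\mathscr K_n(z)$ depends only on $r=|z|$. Since $\vec n(z_0)=z_0$ for $|z_0|=1$, the point $z=z_0+\xi z_0/\sqrt{n\Delta Q(1)}$ (recall $\det\partial\bar\partial\mathscr Q(z_0)=\Delta Q(1)$) has $|z|=|1+\xi/\sqrt{n\Delta Q(1)}|$. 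The rotational invariance of $\mathscr K_n(z)$ lets us replace $z$ by the planar point $\zeta=1+\xi/\sqrt{n\Delta Q(1)}$, which makes uniformity in $z_0$ automatic.

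Next we rewrite each term through the planar polynomials. We have
\[
|\tilde P_j(\zeta)|^2=\frac{|\zeta|^{2j}}{(j+d-1)!\,h_j}.
\]
Hence
\[
\frac{|\zeta|^{2j}}{h_j j!}=\frac{(j+d-1)!}{j!}\,|\tilde P_j(\zeta)|^2,
\]
and therefore
\[
\mathscr K_n(z)=\sum_{j=0}^{n-1}\frac{(j+d-1)!}{j!}\,|\tilde P_j(\zeta)|^2e^{-nV(|\zeta|)}.
\]
We split the sum at $j=n-\sqrt n\log n$.

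For the tail $j<n-\sqrt n\log n$, we bound the terms by an estimate of the type \eqref{eq:estPolySmallj}. In the radial setting this is elementary: $r^{2j}e^{-nV(r)}/h_j$ is sharply concentrated in $r$ around the radius $r_j<1$ where $rV'(r)=2(j+d)/n$. Because $rV'(r)$ is strictly increasing, this radius lies at distance $\gtrsim \log n/\sqrt n$ inside the disk when $n-j\ge\sqrt n\log n$. A Laplace-method estimate of $h_j$ then gives a contribution $\mathcal O(n^{d}e^{-c\log^2 n})$, which is negligible after division by $n^d\Delta Q(1)$.

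For the main range $n-\sqrt n\log n\le j<n$, we write $(j+d-1)!/j!=n^{d-1}(1+\mathcal O(\log n/\sqrt n))$ and insert \eqref{eqtildePbehav}. This gives
\[
\frac{n^{d-1}\sqrt{n\Delta Q(1)}}{\sqrt{2\pi}}\sum_{i=1}^{\sqrt n\log n}\exp\Big(-\tfrac12\big(2\mathrm{Re}\,\xi+i/\sqrt{n\Delta Q(1)}\big)^2\Big)\,(1+\mathcal O(\log^3n/\sqrt n)).
\]
This is a Riemann sum with mesh $1/\sqrt{n\Delta Q(1)}$, so it equals
\[
n^d\Delta Q(1)\,\frac1{\sqrt{2\pi}}\int_0^\infty e^{-\frac12(2\mathrm{Re}\,\xi+x)^2}dx=n^d\Delta Q(1)\,\tfrac12\mathrm{erfc}(\sqrt2\,\mathrm{Re}\,\xi),
\]
up to an Euler--Maclaurin boundary error. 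That error is of order $e^{-2(\mathrm{Re}\,\xi)^2}/\sqrt n$, which is the one-dimensional case of \eqref{eq:EulerMac}.

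The main obstacle is obtaining a \emph{relative} error. For $\mathrm{Re}\,\xi\to+\infty$ with $|\xi|=\mathcal O(\sqrt{\log n})$, the erfc term is itself small, of order $e^{-2(\mathrm{Re}\,\xi)^2}/\mathrm{Re}\,\xi$. The plan is to handle this by comparing the Euler--Maclaurin correction with the size of erfc. The first boundary term is $\mathcal O(n^{-1/2}e^{-2(\mathrm{Re}\,\xi)^2})$, and relative to erfc it becomes $\mathcal O(\mathrm{Re}\,\xi/\sqrt n)=\mathcal O(\sqrt{\log n}/\sqrt n)$. Derivative terms of the integrand carry factors of $\mathrm{Re}\,\xi$ in the same way and stay within the claimed $\mathcal O(\log n/\sqrt n)$.

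The multiplicative $\mathcal O(\log^3 n/\sqrt n)$ factor from \eqref{eqtildePbehav} looks worse than the stated $\mathcal O(\log n/\sqrt n)$. I would try first to show that in the rotational case the expansion of $h_j$ by Laplace's method actually gives $1+\mathcal O(|n-j|/n+|\xi|^3/\sqrt n)$. On the relevant range this is $\mathcal O(\log n/\sqrt n)$ once the dominant indices $n-j=\mathcal O(\sqrt n\sqrt{\log n})$ are taken into account. If that fails, I would accept $\log^3n$, which is consistent with Theorem~\ref{thm:generalMain2}.
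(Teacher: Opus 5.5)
Your route is the paper's. You reduce the diagonal kernel to a planar sum via \eqref{eq:kernelDimReduction}, write each term as $\frac{(j+d-1)!}{j!}|\tilde P_j(\zeta)|^2e^{-nV(|\zeta|)}$, insert \eqref{eqtildePbehav} for $n-\sqrt n\log n\le j<n$, and evaluate the resulting Riemann sum. Two parts of your write-up are more careful than the paper, which only records an additive $\mathcal O(1/\sqrt n)$ error for the sum: your explicit bound on the indices $j<n-\sqrt n\log n$, and your check that the Euler--Maclaurin boundary term is small \emph{relative} to $\frac12\mathrm{erfc}(\sqrt2\,\mathrm{Re}\,\xi)$ when $\mathrm{Re}\,\xi\to+\infty$.

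The step that fails is the upgrade to $\mathcal O(\log n/\sqrt n)$. Grant your per-term bound $1+\mathcal O(|n-j|/n+|\xi|^3/\sqrt n)$. The term $|\xi|^3/\sqrt n$ is still of order $\log^{3/2}n/\sqrt n$ when $|\xi|\asymp\sqrt{\log n}$. Moreover, the per-term Laplace error also contains cubic terms of size $|u|^3/\sqrt n$, where $u=2\,\mathrm{Re}\,\xi+(n-j)/\sqrt{n\Delta Q(1)}$, not only $|n-j|/n$.

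More fundamentally, no argument can reach $\log n/\sqrt n$. Take $V(r)=r^2$. Then $h_j=n^{-j-d}$, so the normalized quantity in the statement equals exactly $e^{-t}\sum_{j<n}t^j/j!$ with $t=n|1+\xi/\sqrt n|^2$, independently of $d$. Temme's uniform expansion of the incomplete gamma function (NIST handbook, Section 8.12) applies to this. For real $\xi=x$ it gives an effective erfc argument $\sqrt2\,x-\frac{x^2}{3\sqrt{2n}}+\mathcal O(x^3/n)$, plus a remainder of relative size $\mathcal O(x/\sqrt n)$. Hence the ratio to $\frac12\mathrm{erfc}(\sqrt2\,x)$ is $1+\frac{2x^3}{3\sqrt n}(1+o(1))$ for $1\ll x\lesssim\sqrt{\log n}$. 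At $x=C\sqrt{\log n}$ this is of exact order $\log^{3/2}n/\sqrt n$.

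So the rate in the statement is not attainable uniformly for $|\xi|=\mathcal O(\sqrt{\log n})$. Your fallback factor $1+\mathcal O(\log^3n/\sqrt n)$, inherited from \eqref{eqtildePbehav}, is what your argument honestly proves. It is also exactly the factor the paper's own displayed computation carries before it asserts $\log n/\sqrt n$, and it matches the rate in Theorem \ref{thm:generalMain2}. A careful Laplace expansion could sharpen this to $\mathcal O(\log^{3/2}n/\sqrt n)$. That would need more regularity of $V$ near $r=1$ than the assumed $C^2$.
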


\begin{proof}
    By radial symmetry, we have $\vec n(z_0)=z_0$. By \eqref{eq:kernelDimReduction} and \eqref{eqtildePbehav}, we have uniformly for $\xi\in\mathbb C$ with $|\xi|=\mathcal O(\sqrt{\log n})$ that
    \begin{multline*}
        \frac1{\det n\partial\bar\partial \mathscr Q(z_0)}\mathscr K_n\left(z_0+\frac{\xi \vec n(z_0)}{\sqrt{n \det \partial\bar\partial\mathscr Q(z_0)}}\right)\\
        = e^{-n V\left(1+\frac{\xi}{\sqrt{n \Delta V(1)}}\right)} \sum_{j=0}^{n-1} \frac{(j+d-1)!}{n^d \sqrt{\Delta Q(1)} j!}\left|\tilde P_{j}\left(1+\frac{\xi}{\sqrt{n \Delta Q(1)}}\right)\right|^2\\
        = \frac1{\sqrt{2\pi}}\sum_{j=0}^{\mathcal O(\sqrt n\log n)} \frac{1+\mathcal O(n^{-1/2}\log n)}{\sqrt{n \Delta Q(1)}} \exp\left(-\frac12\left(2\mathrm{Re} \xi + \frac{j+d-1}{\sqrt{n\Delta Q(1)}}\right)^2\right)\\
        \left(1+\mathcal O\left(\frac{\log^3n}{\sqrt n}\right)\right)
    \end{multline*}
    as $n\to\infty$. By a standard Riemann sum argument, e.g., as in \cite{HedenmalmWennman2021} (or Section \ref{sec:2}), the large $n$ behavior of the sum is given by
    \[
    \frac1{\sqrt{2\pi}}\int_0^\infty e^{-\frac12(2\mathrm{Re} \xi + t)^2} dt
    = \frac12 \mathrm{erfc}(\sqrt 2 \mathrm{Re} \, \xi),
    \]
    up to an $\mathcal O(1/\sqrt n)$ error, uniformly for $|\xi|=\mathcal O(\sqrt{\log n})$.
\end{proof}

\begin{corollary} \label{cor:OutwardXiEta}
 Let $\mathscr Q(z)=V(|z|)$, where $V$ is assumed to be $C^2$, $r\mapsto r V'(r)$ is strictly increasing, and $V'(1)=2$. Let $z_0\in\mathbb C^d$ with $|z_0|=1$. We have as $n\to\infty$ that
    \begin{multline*}
        \frac1{\det n\partial\bar\partial \mathscr Q(z_0)}\mathscr K_n\left(z_0+\frac{\xi \vec n(z_0)}{\sqrt{n \det \partial\bar\partial\mathscr Q(z_0)}}, z_0+\frac{\eta \vec n(z_0)}{\sqrt{n \det \partial\bar\partial\mathscr Q(z_0)}}\right)\\
        \equiv \frac12 \exp\left(\xi \overline \eta-\frac{|\xi|^2+|\eta|^2}{2}\right)\mathrm{erfc}\left(\frac{\xi+\overline\eta}{\sqrt 2}\right)
        \left(1+\mathcal O\left(\frac{\log^3n}{\sqrt n}\right)\right)
    \end{multline*}
    uniformly for $z_0\in\mathbb C^d$ with $|z_0|=1$ and  $\xi, \eta\in\mathbb C$ with $|\xi|=\mathcal O(\sqrt{\log n})$.
\end{corollary}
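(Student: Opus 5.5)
The plan is to upgrade the diagonal result of Proposition \ref{prop:rotationalKerDiag} to the off-diagonal statement by polarization, as in the proof of Theorem \ref{thm:generalMain}. By \eqref{eq:kernelDimReduction}, for points $z=z_0+a\,\xi z_0$ and $w=z_0+a\,\eta z_0$, with $a=1/\sqrt{n\det\partial\bar\partial\mathscr Q(z_0)}$ and $\vec n(z_0)=z_0$, we have $z\cdot w=(1+a\xi)(1+a\bar\eta)$. The kernel therefore reduces exactly to the planar quantity
\[
e^{-\frac12 n(V(|1+a\xi|)+V(|1+a\eta|))}\sum_{j=0}^{n-1}\frac{1}{h_j\,j!}\big((1+a\xi)\overline{(1+a\eta)}\big)^j,
\]
which is Hermitian-analytic in $(\xi,\eta)$ apart from the weight factors. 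All dependence on $z_0$ disappears, which gives uniformity in $z_0$ for free.

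\textbf{Step 1.} Write the polarization of $Q(\zeta)=V(|\zeta|)$ near $\zeta=1$ as $Q(\zeta,\bar\omega)$, which exists locally once $V$ is smooth enough. If $V$ is only $C^2$, use its second-order Taylor polynomial instead; that suffices at scale $n^{-1/2}$. Then set
\[
\psi_n(\xi,\eta)=\frac{a^{2d}\,\mathscr K_n(z,w)}{e^{\frac12 n(Q(\zeta)+Q(\omega))}\,e^{-nQ(\zeta,\bar\omega)}}\cdot\frac{1}{a^{2d}}\,\frac{1}{\det n\partial\bar\partial\mathscr Q(z_0)},
\qquad \zeta=1+a\xi,\ \omega=1+a\eta.
\]
This cancels the weights and leaves a Hermitian-analytic function of $(\xi,\eta)$.

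\textbf{Step 2.} Bound $\psi_n$ locally uniformly on sets $|\xi|,|\eta|\le C\sqrt{\log n}$. Cauchy--Schwarz gives $|\mathscr K_n(z,w)|^2\le\mathscr K_n(z)\mathscr K_n(w)$, and Proposition \ref{prop:rotationalKerDiag} bounds the diagonal. A Taylor expansion of $nQ$ to second order controls the Gaussian factor $|e^{nQ(\zeta,\bar\omega)-\frac12 n(Q(\zeta)+Q(\omega))}|=e^{-\frac12|\xi-\eta|^2+o(1)}$, up to normalization. Vitali's theorem then gives subsequential limits $\Psi(\xi,\eta)$ that are Hermitian-analytic.

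\textbf{Step 3.} On the diagonal, Proposition \ref{prop:rotationalKerDiag} gives $\Psi(\xi,\xi)=\frac12\mathrm{erfc}(\sqrt2\,\mathrm{Re}\,\xi)$. A Hermitian-analytic function is determined by its diagonal values, so $\Psi(\xi,\eta)=\frac12\mathrm{erfc}((\xi+\bar\eta)/\sqrt2)$. Reinserting the factor $e^{nQ(\zeta,\bar\omega)-\frac12 n(Q(\zeta)+Q(\omega))}$, which after removing the unimodular co-cycles $e^{i n\,\mathrm{Im}(\cdots)}$ equals $\exp(\xi\bar\eta-\frac12(|\xi|^2+|\eta|^2))(1+o(1))$, yields the stated kernel.

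The main obstacle is the quantitative rate $\mathcal O(\log^3 n/\sqrt n)$ on the growing region $|\xi|,|\eta|=\mathcal O(\sqrt{\log n})$, since Vitali alone only gives $o(1)$. I would obtain it from Cauchy estimates. The function $\psi_n-\Psi$ is Hermitian-analytic, and by Steps 2--3 it is bounded on a polydisc of radius $2C\sqrt{\log n}$. Its diagonal values are $\mathcal O(\log n/\sqrt n)$ relative to $\mathrm{erfc}$, using the relative error in Proposition \ref{prop:rotationalKerDiag}. Since $\mathrm{erfc}$ can be exponentially small, it is cleaner to work directly with the sum: carry out the Riemann-sum argument of Proposition \ref{prop:rotationalKerDiag} off-diagonal with the analytic summand $\exp(-\frac12(\xi+\bar\eta+t)^2)$, using \eqref{eqtildePbehav} in its polarized form from \cite{HedenmalmWennman2021}. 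The error then comes out as $\mathcal O(\log^3n/\sqrt n)$ times the main term, as the statement requires.
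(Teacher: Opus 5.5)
Your Steps 1--3 are the polarization/Vitali argument that the paper uses for Theorem \ref{thm:generalMain}, not for this corollary. As you observe, they give the limit only locally uniformly with an $o(1)$ error. The Hermitian-analytic functions are bounded only by $e^{\frac12|\xi-\eta|^2}$, which is polynomially large in $n$ on $|\xi|,|\eta|\le C\sqrt{\log n}$. So neither Vitali nor Cauchy estimates deliver the stated rate on the growing region. (Your displayed $\psi_n$ also has the weight factors inverted. The Hermitian-analytic object is $\mathscr K_n(z,w)\,e^{\frac n2(Q(\zeta)+Q(\omega))-nQ(\zeta,\bar\omega)}$, suitably normalized.) The argument that actually proves the statement is your last paragraph. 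That is precisely the paper's proof: a direct off-diagonal Riemann sum with summand $\exp(-\frac12(\xi+\bar\eta+t)^2)$, with $e^{in\arg(z\cdot w)}$ as the co-cycle.

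What you leave unspecified is where the off-diagonal summand asymptotics come from. A polarized Hedenmalm--Wennman expansion is not available here, since $V$ is only $C^2$ and the paper justifies \eqref{eqtildePbehav} for moduli via \cite{AkemannByunEbke2023}. It is also not needed. Since $\tilde P_j$ is a monomial, $\tilde P_j(\zeta)\overline{\tilde P_j(\omega)}=|\tilde P_j(|\zeta\bar\omega|^{1/2})|^2e^{ij\arg(\zeta\bar\omega)}$. So each term is a diagonal term at the real point $|z\cdot w|^{1/2}=1+\frac{\mathrm{Re}(\xi+\bar\eta)}{2\sqrt{n\Delta Q(1)}}+\mathcal O(\frac{\log n}{n})$, where \eqref{eqtildePbehav} applies directly, times a phase.

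Write $j=n-m$. The phase then splits into two pieces:
\begin{itemize}
\item the co-cycle $e^{in\arg(z\cdot w)}$, which is one because $\arg(z\cdot w)=\arg(1+a\xi)-\arg(1+a\eta)$;
\item the factor $e^{-im\arg(z\cdot w)}$, which completes $\exp(-\frac12(\mathrm{Re}(\xi+\bar\eta)+t)^2)$ to $\exp(-\frac12(\xi+\bar\eta+t)^2)$, up to the $m$-independent factor $e^{iXY-\frac12Y^2}$ with $X+iY=\xi+\bar\eta$.
\end{itemize}
The weight factor $e^{n(Q(|z\cdot w|^{1/2})-\frac12Q(z)-\frac12Q(w))}$ is, to leading order, $e^{-\frac12(\mathrm{Re}\,\xi-\mathrm{Re}\,\eta)^2}$. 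Together with $e^{iXY-\frac12Y^2}$ it gives the Ginibre factor modulo co-cycles.

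Finally, the Riemann sum yields an additive error. The relative form you assert cannot hold literally near the complex zeros of $\mathrm{erfc}((\xi+\bar\eta)/\sqrt2)$. These occur for bounded $\xi,\eta$, for instance at $(\xi+\bar\eta)/\sqrt2\approx-1.355+1.991i$. This defect is shared by the statement and by the paper's proof.
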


\begin{proof}
We notice that
\begin{multline*}
\mathscr K_n(z,w) = e^{n(Q(\sqrt{z\cdot w})-\frac12 Q(z)-\frac12 Q(w))}
\sum_{j=0}^{n-1} e^{-n Q(\sqrt{z\cdot w})}|\tilde P_j(\sqrt{z\cdot w})|^2 e^{i j\arg(z\cdot w)}.
\end{multline*}
When 
\begin{align*}
z = z_0+\frac{\xi z_0}{\sqrt{n\det \partial\bar\partial \mathscr Q(z_0)}},
\quad w = z_0+\frac{\eta z_0}{\sqrt{n\det \partial\bar\partial \mathscr Q(z_0)}}
\end{align*}
we have uniformly for $|\xi|,|\eta|=\mathcal O(\sqrt{\log n})$ that
\[
\sqrt{z\cdot w} = 1+\frac{\xi+\overline\eta}{2\sqrt{n\Delta Q(1)}}+\mathcal O\left(\frac{\log n}n\right),
\]
and
\[
\arg(z\cdot w) = \frac1{2\sqrt{n\Delta Q(1)}} \mathrm{Im}(\xi+\overline\eta) + \mathcal O\left(\frac{\log n}{n}\right)
\]
as $n\to\infty$. 
We now follow the same approach as in the proof of Proposition \ref{prop:rotationalKerDiag}, but we replace
\[
-\frac12\left(2\mathrm{Re} \xi + \frac{j+d-1}{\sqrt{n\Delta Q(1)}}\right)^2
\]
in the exponential by
\[
-\frac12\left(\xi+\overline\eta + \frac{j+d-1}{\sqrt{n\Delta Q(1)}}\right)^2
\]
which yields the statement after the standard Riemann sum argument. Note that $e^{i n \arg(z\cdot w)}$ plays the role of the co-cycle. 
\end{proof}

\begin{proposition} \label{prop:rotInvU}
    Let $\mathscr Q(z)=V(|z|)$, where $V$ is assumed to be $C^2$, $r\mapsto r V'(r)$ is strictly increasing, and $V'(1)=2$. Let $z_0\in\mathbb C^d$ with $|z_0|=1$. There exists a unitary matrix $\mathscr U(z_0)$ such that as $n\to\infty$
    \begin{multline*}
        \frac1{\det n \partial\bar\partial\mathscr Q(z_0)} \mathscr K_n\left(z_0+\frac{\mathscr U(z_0)\xi}{\sqrt{n \partial\bar\partial\mathscr Q(z_0)}},z_0+\frac{\mathscr U(z_0)\eta}{\sqrt{n \partial\bar\partial\mathscr Q(z_0)}}\right)    \\
        \equiv
        \frac12 \exp\left(\xi \cdot \eta-\frac{|\xi|^2+|\eta|^2}{2}\right)
        \mathrm{erfc}\left(\sqrt 2 \mathrm{Re} \,\sum_{k=1}^d \frac{\xi_k}{\sqrt {d}}\right)
        \left(1+\mathcal O\left(\frac{\log^3n}{\sqrt n}\right)\right)
    \end{multline*}
    uniformly for $z_0\in\mathbb C^d$ with $|z_0|=1$ and  $\xi\in\mathbb C^d$ with $|\xi|,|\eta|=\mathcal O(\sqrt{\log n})$.
    \end{proposition}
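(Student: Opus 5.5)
The plan is to reduce everything to the one-variable kernel identity \eqref{eq:kernelDimReduction}. The kernel there depends on $z,w$ only through $|z|$, $|w|$ and the Hermitian product $z\cdot w$. Choose a unitary $\mathscr U(z_0)$ whose first column is $z_0$ (the outward normal) and whose remaining columns span $z_0^\perp$. Then compose with a rotation $\mathscr R$ sending $e_1$ to $\frac1{\sqrt d}(1,\ldots,1)$, exactly as at the end of the proof of Lemma \ref{lem:bulkDegen}. That is, $\mathscr U(z_0)=\mathscr V(z_0)\mathscr R$, where $\mathscr V(z_0)e_1=z_0$.

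Since $\mathscr R$ is unitary and the claimed limit is of the form $\exp(\xi\cdot\eta-\ldots)\,\mathrm{erfc}\bigl((\xi\cdot v+v\cdot\eta)/\sqrt2\bigr)$ with $v=\mathscr R^{*}e_1$, it suffices to prove the statement for $\mathscr V(z_0)$ with the erfc argument $(\xi_1+\overline{\eta_1})/\sqrt2$. Here $\xi_1=\xi\cdot e_1$ is the normal component. By \eqref{eq:MApowerMinus12}, the scaling $(\partial\bar\partial\mathscr Q(z_0))^{-1/2}$ acts as $1/\sqrt{\Delta Q(1)}$ on the $z_0$ direction and as the identity on $z_0^\perp$. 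Hence
\[
z=z_0\Bigl(1+\tfrac{\xi_1}{\sqrt{n\Delta Q(1)}}\Bigr)+\tfrac{\xi'}{\sqrt n},\qquad \xi'\in z_0^\perp,
\]
and similarly for $w$ with $\eta$.

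The next step is to expand $z\cdot w$, $|z|$ and $|w|$. We get
\[
z\cdot w=1+\tfrac{\xi_1+\overline{\eta_1}}{\sqrt{n\Delta Q(1)}}+\tfrac{\xi'\cdot\eta'}{n}+\mathcal O\bigl(\tfrac{\log n}{n}\bigr),
\]
and $|z|^2=|1+\xi_1/\sqrt{n\Delta Q(1)}|^2+|\xi'|^2/n$. As in the proof of Corollary \ref{cor:OutwardXiEta}, write
\[
\mathscr K_n(z,w)=e^{n(Q(\sqrt{z\cdot w})-\frac12Q(z)-\frac12Q(w))}\sum_j e^{-nQ(\sqrt{z\cdot w})}|\tilde P_j(\sqrt{z\cdot w})|^2e^{ij\arg(z\cdot w)}.
\]
The tangential terms $\xi'\cdot\eta'/n$ shift $\sqrt{z\cdot w}$ only at order $\log n/n$. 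Inside the Riemann sum (whose variable lives on the scale $n^{-1/2}$) this is absorbed into the error $\mathcal O(\log^3 n/\sqrt n)$. The sum therefore produces $\frac12\mathrm{erfc}((\xi_1+\overline{\eta_1})/\sqrt2)$ times the one-dimensional prefactor, exactly as in Corollary \ref{cor:OutwardXiEta}.

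The prefactor has to be Taylor expanded with $V'(1)=2$ and $V''(1)=4\Delta Q(1)-2$. The normal part gives $\exp(\xi_1\overline{\eta_1}-\frac12|\xi_1|^2-\frac12|\eta_1|^2)$ up to a cocycle. For the tangential part, $n(Q(\sqrt{z\cdot w})-\frac12Q(z)-\frac12Q(w))$ contributes
\[
\tfrac{V'(1)}{2}\Bigl(\xi'\cdot\eta'-\tfrac{|\xi'|^2+|\eta'|^2}{2}\Bigr)=\xi'\cdot\eta'-\tfrac{|\xi'|^2+|\eta'|^2}{2},
\]
because $nQ(|z|)\approx nQ(|1+\ldots|)+\frac{V'(1)}2|\xi'|^2$. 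The factor $\det n\partial\bar\partial\mathscr Q(z_0)=n^d\Delta Q(1)$ matches the normalization of Proposition \ref{prop:rotationalKerDiag}. Multiplying the normal and tangential parts gives $\exp(\xi\cdot\eta-\frac{|\xi|^2+|\eta|^2}{2})$, with phases $e^{in\arg(\cdot)}$ and $e^{i\,\mathrm{Im}(\cdot)}$ collected into the cocycle.

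The main obstacle is controlling the cross terms uniformly. One must check that the imaginary part $\mathrm{Im}(\xi'\cdot\eta')/n$ in $\arg(z\cdot w)$, multiplied by $j\approx n$, separates as a pure cocycle $c(\xi)\overline{c(\eta)}$ times the Ginibre phase. One must also check that cubic Taylor remainders of size $(\log n)^{3/2}/\sqrt n$ stay within the stated error. I would handle this by writing $n\log(z\cdot w)$ exactly and comparing $e^{(n-j)\log(z\cdot w)}$ with the sum already controlled in Corollary \ref{cor:OutwardXiEta}.
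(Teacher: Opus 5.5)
Your proposal is correct and follows essentially the paper's own route. Like the paper, you choose $\mathscr U(z_0)$ with $\mathscr U(z_0)^\dagger z_0=\frac1{\sqrt d}(1,\ldots,1)$, expand $z\cdot w$ in the dimension-reduced formula \eqref{eq:kernelDimReduction}, and rerun the Riemann-sum argument of Proposition~\ref{prop:rotationalKerDiag} and Corollary~\ref{cor:OutwardXiEta}. Your explicit treatment of the tangential term $\xi'\cdot\eta'/n$ (negligible inside the sum, but producing the tangential Ginibre factor through the prefactor and the phase $j\arg(z\cdot w)$) is more careful than the paper's remark that this term may be ``essentially ignored''.

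One small fix: for the normal component to be $\xi\cdot\frac1{\sqrt d}(1,\ldots,1)$ you need $\mathscr R^{*}e_1=\frac1{\sqrt d}(1,\ldots,1)$. So $\mathscr R$ should map $\frac1{\sqrt d}(1,\ldots,1)$ to $e_1$, not the other way round. Alternatively, take $\mathscr R$ to be the self-adjoint Householder reflection exchanging the two vectors.
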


\begin{proof}
    There exists a unitary matrix $\mathscr U(z_0)$ (a rotation in $\mathbb R^{2d}$) such that
\[
\mathscr U(z_0)^\dagger z_0 = \frac1{\sqrt d} (1,1,\ldots,1).
\]
We pick such a matrix henceforth. Using \eqref{eq:MApowerMinus12} we have
\[
\left(\partial \bar\partial\mathscr Q(z_0)\right)^{-1/2} z_0 = \frac{z_0}{\sqrt{\Delta Q(1)}}.
\]
Hence, we have
\[
\mathscr U(z_0)^\dagger \left(\partial \bar\partial\mathscr Q(z_0)\right)^{-1/2} z_0 = \frac1{\sqrt{d\Delta Q(1)}}(1,1,\ldots,1).
\]
Now notice that
\begin{align*}
    \left(z_0+\frac{\mathscr U(z_0)\xi}{\sqrt{n \partial\bar\partial\mathscr Q(z_0)}}\right) &\cdot \left(z_0+\frac{\mathscr U(z_0)\eta}{\sqrt{n \partial\bar\partial\mathscr Q(z_0)}}\right) \\
    &= 
    1+\frac{\xi\cdot (\mathscr U(z_0)^\dagger z_0)}{\sqrt{n \partial\bar\partial\mathscr Q(z_0)}}
    +\frac{(\mathscr U(z_0)^\dagger z_0)\cdot\eta}{\sqrt{n \partial\bar\partial\mathscr Q(z_0)}}
    + \frac{\xi\cdot\eta}{n \partial\bar\partial\mathscr Q(z_0)}\\
    &= 1+\sum_{k=1}^d \frac{\xi_k+\overline{\eta_k}}{\sqrt{d \Delta Q(1)}}
    + \frac{\xi\cdot\eta}{n \partial\bar\partial\mathscr Q(z_0)}\\
    &= \left(1+\frac{\xi_1+\ldots+\xi_d}{\sqrt{d\Delta Q(1)}}\right) \overline{\left(1+\frac{\eta_1+\ldots+\eta_d}{\sqrt{d\Delta Q(1)}}\right)}
    + \mathcal O\left(\frac{\log n}n\right)
\end{align*}
as $n\to\infty$, uniformly for $|\xi|,|\eta|=\mathcal O(\sqrt{\log n})$. 
We may essentially ignore the last term, as it is of negligible order. 
The result now follows by applying the same strategy as in the proof of Proposition \ref{prop:rotationalKerDiag} and Corollary \ref{cor:OutwardXiEta}.
\end{proof}

\begin{proof}[Proof of Theorem \ref{thm:generalMain2}]
We simply collect the results from Corollary \ref{cor:OutwardXiEta} and Proposition \ref{prop:rotInvU}.
\end{proof}

\subsection{An edge scaling limit for counting statistics} \label{sec:3.2}

In this final subsection for the rotational symmetric case, we investigate another type of edge behavior, that of a particular type of linear statistics called counting statistics. While linear statistics in general are global objects, in the rotational symmetric setting they have a local flavor. Given a potential $\mathscr Q:\mathbb C\to \mathbb R$, 
for any test function $f:\mathbb C^d\to \mathbb C$ we may consider the linear statistic
\[
\sum_{j=0}^{N_n^d} f(z_{(j)})
\]
where the summation is over all $N_n^d=\binom{n+d-1}{d}$ points $z_{(j)}\in\mathbb C^d$ of the associated DPP. As is well-known, the variance of this linear statistic is given by
\begin{align} \label{eq:defVar}
\int_{\mathbb C^d} \int_{\mathbb C^d} |f(z)-f(w)|^2 \left|\mathscr K_n(z,w)\right|^2 d\omega(z) d\omega(w).
\end{align}
For test functions $f$ that are Lippschitz with compact support contained in the bulk $\mathring S_{\mathscr Q}$, Berman proved a Central Limit Theorem \cite{Berman2018} (under the assumption that $\mathscr Q$ is locally $C^{1,1}$) and in particular that the limiting variance behaves like
\begin{align} \label{eq:BermanCLT}
\sigma^2 = n^{d-1} \int_{S_{\mathscr Q}} |\nabla f(z)|^2 d\omega(z). 
\end{align}
The situation gets more interesting when we allow the support of $f$ to intersect the droplet boundary $\partial S_{\mathscr Q}$. For $d>1$, it is not known what happens in the general case, but the $d=1$ case is well-understood \cite{AmeurHedenmalmMakarov2011, AmeurHedenmalmMakarov2015}. Ameur, Hedenmalm and Makarov proved that the limiting variance is now $\sigma^2+\tilde\sigma^2$, where $\tilde\sigma^2$ can be expressed using the Neumann jump operator. An analogous limiting formula was proved by Leblé and Serfaty for 2D Coulomb gases with general temperature \cite{LebleSerfaty2016}.  
Such a formula was first proved for the Ginibre ensemble $Q(z)=|z|^2$ by Rider and Virág, in which case we have the particularly appealing form
\[
\tilde\sigma^2 = \frac12 \|f\|_{H^{1/2}(\mathbb S^1)}^2 = \frac12 \sum_{\ell\in\mathbb Z} |\ell| |\hat f(\ell)|^2,
\]
where $\hat f(\ell)$ denotes the $\ell$-th Fourier coefficient
\[
\hat f(\ell) = \frac1{2\pi i} \int_{-\pi}^\pi f(e^{i t}) e^{-i \ell t} dt.
\]
A similar limiting formula was recently proved in a related geometric setting \cite{Ioos2025}.

In this section we focus on radial counting statistics, we let $\mathscr Q$ be a rotational symmetric potential satisfying the conditions of Theorem \ref{thm:generalMain2}, and we let $N_n^d(a)$ be the random variable that gives the number of points in the $2d$-dimensional ball $|z|\leq a$. This corresponds to the choice of (non-smooth) test function
\[
f(z) = \mathfrak{1}_{B(0,a)}(z). 
\]
In this case the variance of the counting statistics is usually called the number variance. 
Since we are interested in edge behaviors, we consider the choice
\begin{align} \label{eq:defandelta}
a=a_n(\delta)=1+\frac{\delta}{\sqrt{2n \Delta Q(1)}},
\end{align}
for $\delta\in\mathbb R$. 
For $d=1$ it was proved by Akemann, Byun and Ebke \cite{AkemannByunEbke2023} (see also \cite{LacroixMajumdarSchehr2019}) that for rotational symmetric potentials
\[
\lim_{n\to\infty} \frac1{\sqrt{n\Delta Q(1)}} \mathrm{Var} \, N_n^d(a_n(\delta))
= \frac1{\sqrt\pi} f(\delta),
\]
where
\[
f(\delta) = \sqrt{2\pi} \int_\delta^\infty \frac{\mathrm{erfc}(t) \mathrm{erfc}(-t)}{4} \, dt.
\]
This was extended to the non-rotational symmetric setting in \cite{AkemannDuitsMolag2026} and \cite{MarzoMolagOrtegaCerda2026}. 
Our goal in this section is to show that a similar limiting formula holds for $d>1$. First, we start with a general lemma. We remind the reader that $P_j$ are the degree $j$ planar orthogonal polynomials with positive leading coefficient such that
\[
\int_{\mathbb C} P_j(z) P_k(z) e^{-n Q(z)} dA(z) = \delta_{j,k}, 
\qquad j,k=0,1,\ldots
\]

\begin{lemma} \label{lem:VarLinStat}
Let $J\in L^p([0,\infty)^2)$ for some $p\geq 1$. Assume that $\mathscr Q:\mathbb C^d\to\mathbb R$ is a rotational symmetric potential that satisfies the conditions of Theorem \ref{thm:generalMain2}. Then there exists an $\epsilon>0$ and a $c>0$ such that as $n\to\infty$ 
\begin{multline} \label{eq:VarInts}
\Gamma(d)\pi^{2d} \int_{\mathbb C^d} \int_{\mathbb C^d} J(|z|, |w|) \left|\mathscr K_n(z,w)\right|^2 d\omega(z) d\omega(w)\\
=\iint\limits_{|z|,\,|w|\leq 1-\epsilon\frac{\log n}{\sqrt{n}}} J(|z|, |w|) \left|\hat K_n(z,w)\right|^2 dA(z) dA(w)\\
 +  n^{d-1}(1+\mathcal O(\frac1{\sqrt n}))\iint\limits_{1-\epsilon \frac{\log n}{\sqrt n}\leq |z|,|w|\leq 1+\epsilon \frac{\log n}{\sqrt n}} J(|z|, |w|) \left|K_{n+d-1}(z,w)\right|^2 dA(z) dA(w)\\
 + \mathcal O(e^{-c\log^2 n}),
\end{multline}
for some constant $c>0$, where 
\begin{align*}
K_n(z,w) &= e^{-\frac12n(Q(z)+Q(w))}\sum_{j=0}^{n-1} P_j(z) \overline{P_j(w)}\\
\hat K_n(z,w) &= e^{-\frac12n(Q(z)+Q(w))}\sum_{j=0}^{n+d-2} \sqrt{\frac{\Gamma(j+1)}{\Gamma(j-d+2)}} P_j(z) \overline{P_j(w)}.
\end{align*}
\end{lemma}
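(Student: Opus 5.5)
The plan is to reduce the $2d$-real-dimensional double integral to a planar one by integrating out the angular variables, using the explicit kernel \eqref{eq:kernelDimReduction}. Write $z=rs$, $w=\rho t$ with $r,\rho\ge0$ and $s,t\in\mathbb S^{2d-1}$. Then
\[
|\mathscr K_n(z,w)|^2=e^{-n(V(r)+V(\rho))}\sum_{j,k=0}^{n-1}\frac{(r\rho)^{j+k}(s\cdot t)^j\overline{(s\cdot t)^k}}{h_jh_k\,j!\,k!}.
\]
Since $J$ depends only on $r,\rho$, the first step is to compute $\iint_{\mathbb S^{2d-1}\times\mathbb S^{2d-1}}(s\cdot t)^j\overline{(s\cdot t)}^k\,d\Omega(s)\,d\Omega(t)$. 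Expanding $(s\cdot t)^j$ by the multinomial theorem, the formula \eqref{eq:BasisRadial} from Lemma \ref{lem:BasisRadial} makes these integrals diagonal in $j=k$. The result should be $(2\pi^d)^2\,j!/\Gamma(j+d)$ times a constant. Indeed, $\sum_{|j'|=j}\frac{j!}{j'!}\cdot\frac{j'!}{\Gamma(j+d)}$ counts multi-indices, giving $\binom{j+d-1}{d-1}\frac{j!}{\Gamma(j+d)}=\frac{1}{\Gamma(d)}$.

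This yields a one-dimensional radial sum
\[
\frac{C_d}{\Gamma(d)}\int_0^\infty\!\!\int_0^\infty J(r,\rho)\sum_{j=0}^{n-1}\frac{(r\rho)^{2j+2d-1}e^{-n(V(r)+V(\rho))}}{h_j^2\,j!^2}\,\frac{j!^2}{\ldots}\,dr\,d\rho .
\]
We then re-express it as a planar double integral over $|z|=r$, $|w|=\rho$. This uses $P_{j+d-1}(z)=z^{j+d-1}/\sqrt{(j+d-1)!\,h_j}$ and the planar identity $\iint J\,|\sum_m c_m P_m(z)\overline{P_m(w)}|^2\,dA\,dA=\frac{4}{\pi^2}\!\iint J\sum_m c_m^2|P_m|^2|P_m|^2 r\rho\,dr\,d\rho$, which holds because planar angular integration is again diagonal. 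Matching factorials should give exactly the weights $c_m^2=\Gamma(m+1)/\Gamma(m-d+2)$ with $m=j+d-1$ ranging over $d-1\le m\le n+d-2$. This produces the kernel $\hat K_n$ (terms with $m<d-1$ vanish since $1/\Gamma$ of a nonpositive integer is $0$), and the constant bookkeeping produces $\Gamma(d)\pi^{2d}$. At this stage one has an exact identity: the left-hand side equals $\iint J|\hat K_n|^2$ over all of $\mathbb C^2$.

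Next we split the domain. We take the inner region $|z|,|w|\le1-\epsilon\log n/\sqrt n$, the annular region $A$ where both moduli lie in $[1-\epsilon\log n/\sqrt n,1+\epsilon\log n/\sqrt n]$, and the rest. On the rest, at least one variable is either outside $1+\epsilon\log n/\sqrt n$, or in the annulus while the other is deep inside. We bound these mixed regions using the diagonal decay $|P_m(z)|^2e^{-nQ(z)}=\mathcal O(ne^{-c\log^2n})$, valid for $|z|$ at distance $\gg\log n/\sqrt n$ on the wrong side of the radius $|z|\approx(m/n)^{\cdot}$ where $P_m$ peaks. This is \eqref{eq:estPolySmallj} and \eqref{eqtildePbehav}. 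Because the sum is diagonal, each term is $|P_m(z)|^2|P_m(w)|^2$, and a term contributes only if both $z$ and $w$ are near the peak radius of $P_m$. Hence the cross regions are $\mathcal O(e^{-c\log^2n})$ after Hölder with $J\in L^p$. The polynomial growth of the number of terms and the Gaussian tails outside are absorbed, and the exterior tail is controlled by \eqref{eq:growthCondV}.

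On $A$, only indices $m\ge n-C\sqrt n\log n$ contribute non-negligibly, with $\epsilon$ chosen small relative to $C$. For those $m$ we have $\Gamma(m+1)/\Gamma(m-d+2)=n^{d-1}(1+\mathcal O(\log n/\sqrt n))$. Also, indices below $n-C\sqrt n\log n$, which appear in $\hat K_n$ but with different weights than in $K_{n+d-1}$, are negligible on $A$. So $|\hat K_n|^2=n^{d-1}(1+o(1))|K_{n+d-1}|^2+\mathcal O(e^{-c\log^2n})$ there.

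The main obstacle is the error claim $1+\mathcal O(1/\sqrt n)$ rather than $\log n/\sqrt n$. To get it, I would instead write $c_m^2=n^{d-1}\bigl(1+(d-1)(m-n)/n+\ldots\bigr)$ and bound the correction weighted by $|m-n|/n\lesssim\log n/\sqrt n$ against the Gaussian profile of \eqref{eqtildePbehav}. This gives only $\log n/\sqrt n$ naively. Reaching $1/\sqrt n$ requires using that the Gaussian weights effectively confine $m-n$ to $\mathcal O(\sqrt n)$ in $L^1$. I expect this sharper bookkeeping, together with uniform control of the exponentially small cross terms, to be the delicate part.
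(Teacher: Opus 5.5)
Your reduction step is the same as the paper's. You integrate out the angles using \eqref{eq:BasisRadial} so that only diagonal terms survive, shift $m=j+d-1$, and recognise the radial sum as $\iint J|\hat K_n|^2\,dA\,dA$. The split into inner disc, annulus and remainder is also the paper's.

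You differ in how you dispose of the remainder. You use the diagonal structure termwise, with Gaussian localisation of each $|P_m|^2e^{-nQ}$ about its peak radius $r_m$ (where $r_mV'(r_m)\approx 2m/n$). The paper instead uses Cauchy--Schwarz with $K(z)\lesssim n e^{-n(Q-\check Q)(z)}$ in the exterior, and the Ameur--Hedenmalm--Makarov off-diagonal bound on inner$\times$annulus. Note that \eqref{eq:estPolySmallj} and \eqref{eqtildePbehav}, which you cite, are only local estimates. The global termwise bound you need comes instead from unimodality of $r\mapsto r^{2m+1}e^{-nV(r)}$, which holds because $rV'(r)$ is increasing.

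\textbf{The gap: the inner$\times$annulus region.} Describing it as ``one variable in the annulus while the other is deep inside'' is not accurate. With $R_n=1-\epsilon\log n/\sqrt n$, the region contains every pair with $|z|\le R_n\le |w|\le 1+\epsilon\log n/\sqrt n$. This includes pairs straddling $R_n$ with $||z|-|w||$ arbitrarily small.

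\begin{itemize}
\item Consider the $\asymp\sqrt n$ indices $m$ with $|r_m-R_n|=\mathcal O(n^{-1/2})$. For these, $|P_m(z)|^2e^{-nQ(z)}$ and $|P_m(w)|^2e^{-nQ(w)}$ are both at their peaks and $c_m^2\asymp n^{d-1}$, so the termwise argument gives nothing.
\item This cannot be repaired for general $J\in L^p$. Take $J=\mathfrak{1}_{[0,2]^2}$ and let $\mu_m$ be the radial law of $|P_m|^2e^{-nQ}\,dA$. The inner$\times$annulus region then contributes roughly $n^{d-1}\sum_m \mu_m([0,R_n])\,\mu_m([R_n,\infty))\asymp n^{d-1}\sqrt n$. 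This is absent from the right-hand side of \eqref{eq:VarInts}, and the error terms there cannot absorb it.
\item The paper's proof has the same hole. The AHM exponent is $\lambda_0\sqrt n\min(||z|-1|,||z|-|w||)$, which degenerates when $|z|\approx|w|$, so the asserted bound $n^{d+1-\lambda_0\epsilon}$ is not uniform on that region.
\end{itemize}

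What saves the only application, Theorem \ref{thm:edgeScalingCounting}, is the structure of $J$. For the counting statistic, $J\neq 0$ on inner$\times$annulus forces $|w|>a_n(\delta)$, hence $|w|-|z|\ge \epsilon\log n/\sqrt n+\mathcal O(n^{-1/2})$. So for every $m$, one of the two factors is at distance $\gtrsim \epsilon\log n/\sqrt n$ from $r_m$. Under such a separation hypothesis your termwise mechanism works and gives $\mathcal O(e^{-c\log^2 n})$. That is better than the power of $n$ the paper's estimate yields even where it applies.

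\textbf{Two smaller points.}

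First, no sharper bookkeeping will give the factor $1+\mathcal O(1/\sqrt n)$ for general $J$ when $d\ge 2$. For the indices peaking near the inner edge of the annulus, $c_m^2/n^{d-1}=\prod_{i=0}^{d-2}\frac{m-i}{n}=1-\Theta(\log n/\sqrt n)$. The honest statement is therefore $1+\mathcal O(\log n/\sqrt n)$, which is also all the paper's proof delivers. The factor $1/\sqrt n$ requires $J$ concentrated near radius $1$, as in the counting application. Also, the comparison has to be made after angular integration, not pointwise between $|\hat K_n|^2$ and $|K_{n+d-1}|^2$.

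Second, if you carry out the constant bookkeeping you will get $\Gamma(d)$ rather than $\Gamma(d)\pi^{2d}$. The $\pi^{-2d}$ from $d\omega\otimes d\omega$ cancels the $(2\pi^d)^2$ from the two sphere integrals, and both sides end up with the same factor $4$. For $d=1$, where $\hat K_n=K_n=\mathscr K_n$, the stated $\pi^2$ is visibly wrong.
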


\begin{proof}
As before we write $\mathscr Q(z)=Q(|z|)$ and without loss of generality we set $Q(1)=0$. 
For convenience, we denote $\hat J(|z|,|w|)=e^{-n Q(|z|)} e^{-n Q(|w|)} J(|z|,|w|)$. 
Write $z=r \Omega$ and $w=r'\Omega'$, where $\Omega, \Omega'$ are in the $2d-1$ dimensional unit sphere $\mathbb S^{2d-1}\subset \mathbb C^d$. 
Using the expression for the orthogonal polynomials in Lemma \ref{lem:BasisRadial}, and in particular \eqref{eq:BasisRadial} to go from the second to the third line, we find
\begin{align*}
&\int_{\mathbb C^d} \int_{\mathbb C^d} J(|z|, |w|) \left|\mathscr K_n(z,w)\right|^2 d\omega(z) d\omega(w)\\
&= \frac1{\pi^{2d}} \int_0^\infty \int_0^\infty \hat J(r, r') \int_{\mathbb S^{2d-1}} \int_{\mathbb S^{2d-1}} \sum_{|j|, |j'|<n} \frac{z^j \overline w^j \overline z^{j'} w^{j'}}{j! h_{|j|} j'! h_{|j'|} } d\Omega d\Omega' r^{2d-1} dr r'^{2d-1} dr'\\
&= \frac{2}{\pi^{2d}}\int_0^\infty \int_0^\infty \hat J(r,r') \int_{\mathbb S^{2d-1}} \sum_{|j|<n} \frac{\Omega^j \overline\Omega^j}{j! \Gamma(|j|+d) h_{|j|}^2} (r r')^{2|j|+2d-1} d\Omega dr dr'\\
&= \frac{2}{\pi^{2d}}\int_0^\infty \int_0^\infty \hat J(r,r') \int_{\mathbb S^{2d-1}} \sum_{j=0}^{n-1} \frac{(\Omega\cdot \Omega)^{n-1}}{j! \Gamma(j+d) h_{j}^2} (r r')^{2 j+2d-1} d\Omega dr dr'\\
&= \frac{4}{\pi^{2d}\Gamma(d)}\int_0^\infty \int_0^\infty \hat J(r,r') \sum_{j=0}^{n-1} \frac{1}{j! \Gamma(j+d) h_{j}^2} (r r')^{2 j+2d-1}dr dr'\\
&= \frac{4}{\pi^{2d}\Gamma(d)}\int_0^\infty \int_0^\infty \hat J(r,r') \sum_{j=0}^{n+d-2} \frac{\Gamma(j+1)}{\Gamma(j-d+2)} \frac{1}{j!^2 h_{j-d+1}^2} (r r')^{2 j+1} dr dr'.
\end{align*}
As remarked before, explicitly, we have
\[
P_j(z) = \frac{z^j}{\sqrt{j! h_{j-d+1}}},
\]
with $h_j$ as in Lemma \ref{lem:BasisRadial} and hence
\begin{multline*}
4\int_0^\infty \int_0^\infty F(r,r') \sum_{j=0}^{n-1} \frac{j!}{(j-d+1)!} \frac{1}{j!^2 h_{j-d+1}^2} (r r')^{2 j+1}dr dr'\\
= \int_{\mathbb C} \int_{\mathbb C} f(|z|,|w|) \left|\hat K_n(z,w)\right|^2 dA(z) dA(w).
\end{multline*}
It remains to estimate the integrand in the relevant regions. Note that
\begin{align*}
\left|\hat K_n(z,w)\right|^2
\leq n^{d-1} K_{n+d-1}(|z|,|w|)^2
\end{align*}
for all $z,w\in\mathbb C$. When $|z|\geq 1+\epsilon \frac{\log n}{\sqrt n}$ and $z\in\mathbb C$ we have by Cauchy-Schwarz and a well-known estimate 
\[
K_{n+d-1}(|z|,|w|)^2 \leq K_{n+d-1}(z) K_{n+d-1}(w)\lesssim n^2 e^{-n (Q(z)-\check Q(z))} e^{-n(Q(w)-\check Q(w))}.
\]
We know that $Q-\check Q$ behaves quadratically (e.g., see \cite[Proposition 3.6]{HedenmalmWennman2021}) just outside the droplet. Hence there exists a constant $\lambda>0$ (independent of $z$) such that
\[
Q(|z|)-\check Q(|z|) \geq \frac12(Q(|z|)-\check Q(z))+\lambda (|z|-1)^2 \geq \frac12(Q(|z|)-\check Q(z))+\lambda \epsilon^2 \frac{\log^2n}{n}.
\]
Combined with the growth conditions \eqref{eq:growthQ} and \eqref{eq:growthCheckQ} on $Q$ and $\check Q$ for large $|z|$, this shows us that this contribution to the integral is of order $e^{-c\log^2n}$ for some constant $c>0$. Next, assume that $|z|\leq 1-\epsilon \frac{\log n}{\sqrt n}$ while $1-\epsilon \frac{\log n}{\sqrt n}\leq |w|\leq 1+\epsilon \frac{\log n}{\sqrt n}$. By the inequality in \cite[Corollary 8.2]{AmeurHedenmalmMakarov2010} we have
\[
K_{n+d-1}(|z|,|w|)^2  \lesssim  n^{2} e^{-\lambda_0 \sqrt n \min(||z|-1|,||z|-|w||)} e^{-n(Q(w)-\check Q(w))},
\]
where $\lambda_0>0$ and the implied constant is independent of $z$ and $w$. We infer that
\[
n^{d-1} K_{n+d-1}(|z|,|w|)^2  \lesssim  n^{d+1-\lambda_0 \epsilon},
\]
which is small for $\epsilon>0$ big enough. Then there are two integration regions that remain. For the region $|z|,|w|\leq 1-\epsilon\frac{\log n}{\sqrt n}$ there is nothing left to prove. For the remaining region where $1-\epsilon \frac{\log n}{\sqrt n}\leq |z|,|w|\leq 1+\epsilon \frac{\log n}{\sqrt n}$, since indices $j\leq n-\sqrt n \log n$ do not contribute to the dominant order we have
\begin{align*}
\hat K_n(|z|,|w|) &= e^{-\frac12n(Q(|z|)+Q(|w|))} \sum_{j=\lceil n-\sqrt n\log n\rceil}^{n+d-2} n^{d-1}(1+\mathcal O(\frac{\log n}{\sqrt n})) P_j(|z|) \overline{P_j(|w|)}\\
&= n^{d-1}(1+\mathcal O(\frac{\log n}{\sqrt n})) e^{\frac12(d-1) (Q(|z|)+Q(|w|))} K_{n+d}(|z|,|w|)\\
&= n^{d-1}(1+\mathcal O(\frac{\log n}{\sqrt n})) K_{n+d}(|z|,|w|).
\end{align*}
\end{proof}

Note that $K_n$ is simply the correlation kernel for the planar weight $Q:\mathbb C\to \mathbb R$. However, it is not implied that $\hat K_n$ necessarily has the interpretation of a correlation kernel. Though, near the droplet boundary for $z$ and $w$ not too close to each other, it should approximate the Sz\H{e}go kernel \cite{AmeurCronvall2022}. For radial linear statistics we take
\[
J(|z|,|w|) = |f(|z|)-f(|w|)|^2.
\]
The first term in \eqref{eq:VarInts} will yield the term \eqref{eq:BermanCLT} found by Berman (for Lipschitz test functions). The second term, due to \cite{AmeurHedenmalmMakarov2015}, should give the extra term $\tilde\sigma^2$ determined by the Neumann jump operator when $d=1$, but, somewhat anticlimactically, for radial functions $f$ this term $\tilde\sigma^2$ vanishes. We thus cannot extract meaningful information about the general variance term associated to the edge. However, we can say something about the number variance near the edge, that is when we consider a microscopic dilation of the droplet.


\begin{theorem} \label{thm:edgeScalingCounting}
Assume that $\mathscr Q:\mathbb C^d\to\mathbb R$ is a rotational symmetric potential. Let $N_n^d(a)$ denote the number of points in the disc $|z|\leq a$. Then, with $a_n(\delta)$ as defined in \eqref{eq:defandelta}, we have as $n\to\infty$ that
\begin{align*}
\lim_{n\to\infty} \frac1{n^{d-1}\sqrt n} \mathrm{Var} \, N_n^d(a_n(\delta)) =  \frac{f(\delta)}{2\pi \sqrt\pi} \sqrt{\det \partial \bar\partial \mathscr Q(z_0)} \, |\mathbb S^{2d-1}|,
\end{align*}
uniformly for $\delta\in\mathbb R$ in compact sets, any $z_0\in\mathbb C^d$ with $|z_0|=1$, and
\[
f(\delta) = \sqrt{2\pi} \int_\delta^\infty \frac{\mathrm{erfc}(t) \mathrm{erfc}(-t)}{4} \, dt.
\]
\end{theorem}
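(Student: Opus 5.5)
The plan is to reduce the number variance to a one-dimensional radial computation via the exact dimension reduction of Section \ref{sec:3}, and then evaluate the edge contribution with the erfc-type asymptotics \eqref{eqtildePbehav}. For the counting statistic with $f=\mathfrak 1_{B(0,a)}$, the variance \eqref{eq:defVar} is
\[
\mathrm{Var}\,N_n^d(a) = 2\int_{|z|\le a}\int_{|w|>a}|\mathscr K_n(z,w)|^2\,d\omega(z)\,d\omega(w),
\]
which is of the form treated in Lemma \ref{lem:VarLinStat} with $J(r,r')=|\mathfrak 1_{r\le a}-\mathfrak 1_{r'\le a}|^2$. I will not use that lemma's final approximation, but rather its intermediate exact identity. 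Integrating out the angular variables with \eqref{eq:BasisRadial} shows that the variance equals a constant times
\[
\sum_{j=0}^{n-1}\frac{1}{j!\,\Gamma(j+d)h_j^2}\int_0^a\int_a^\infty (rr')^{2j+2d-1}e^{-nV(r)-nV(r')}\,dr'\,dr .
\]
This is a sum over $j$ of products $p_j(a)(1-p_j(a))\,h_j^2$-normalised, where
\[
p_j(a)=\frac{\int_0^a r^{2j+2d-1}e^{-nV(r)}dr}{\int_0^\infty r^{2j+2d-1}e^{-nV(r)}dr}
\]
is the probability that a point built on the $j$-th radial mode lies inside the ball. Concretely, the variance equals $\sum_{j<n}\binom{j+d-1}{d-1}\,p_j(a)(1-p_j(a))$, since all multi-indices with $|j'|=j$ share the same radial profile and the total number of them is the multiplicity $\binom{j+d-1}{d-1}$. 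I will verify this through the identity $\sum_{|j'|=j}|z^{j'}|^2/j'!=|z|^{2j}/j!$ together with \eqref{eq:BasisRadial}. It is the standard observation that the radii behave like independent variables, as in \cite{AkemannByunEbke2023}.

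Next I localise the sum. For $j\le n-\sqrt n\log n$ the radial density of mode $j$ concentrates at $r_j<1$ with $1-r_j\gtrsim \log n/\sqrt n$, so $1-p_j(a_n(\delta))=\mathcal O(e^{-c\log^2 n})$. This is a Laplace-method estimate, or alternatively it follows from \eqref{eq:estPolySmallj}-type bounds applied to $\tilde P_j$. Hence only $n-\sqrt n\log n\le j<n$ contribute, and on this range $\binom{j+d-1}{d-1}=\frac{n^{d-1}}{(d-1)!}(1+\mathcal O(\log n/\sqrt n))$. For these $j$, the radial density $r^{2j+2d-1}e^{-nV(r)}/(\text{norm})$ is, by \eqref{eqtildePbehav} (equivalently by Laplace's method near $r=1$, using $V'(1)=2$ and $\Delta Q(1)=(V''(1)+2)/4$), asymptotically Gaussian. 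With $j=n-i$ it is centred at $1-i/(2n\Delta Q(1))$ and has width $1/(2\sqrt{n\Delta Q(1)})$. Consequently
\[
p_j(a_n(\delta))=\tfrac12\mathrm{erfc}\!\left(-\delta-\tfrac{i}{\sqrt{2n\Delta Q(1)}}\right)+o(1)
\]
uniformly, where the scaling \eqref{eq:defandelta} was chosen exactly to produce this form. The sum over $i$ is then a Riemann sum with mesh $1/\sqrt{2n\Delta Q(1)}$:
\[
\sum_i p_j(1-p_j)\approx \sqrt{2n\Delta Q(1)}\int_{0}^\infty \frac{\mathrm{erfc}(-\delta-s)\,\mathrm{erfc}(\delta+s)}{4}\,ds=\sqrt{2n\Delta Q(1)}\int_\delta^\infty\frac{\mathrm{erfc}(t)\,\mathrm{erfc}(-t)}{4}\,dt .
\]
The integrand decays like a Gaussian, so the truncation at $i\le\sqrt n\log n$ and the Riemann-sum error are both negligible.

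Combining these steps gives $\mathrm{Var}\,N_n^d\sim \frac{n^{d-1}}{(d-1)!}\sqrt{n\Delta Q(1)}\,\frac{f(\delta)}{\sqrt\pi}$. Finally, I match constants with the stated form, using $\det\partial\bar\partial\mathscr Q(z_0)=\Delta Q(1)$ and $|\mathbb S^{2d-1}|=2\pi^d/\Gamma(d)$, taking care of the paper's $\pi^{-1}$ normalisation of $dA$. The main obstacle is this constant bookkeeping, together with justifying the uniform Gaussian (erfc) approximation of $p_j$ with errors summable over $\sqrt n\log n$ indices. For the latter I would rely on a uniform Laplace expansion of the radial integrals, as in the proof of Theorem 1.2 of \cite{AkemannByunEbke2023}, which needs only $V\in C^2$ near $r=1$ and the strict monotonicity of $rV'(r)$.
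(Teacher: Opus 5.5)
Your method is sound and differs from the paper's, but your closing step, matching your constant to the stated one, fails: your own (correct) computation yields a limit that is $\pi^{d-1}$ times smaller than the stated right-hand side when $d\ge2$.

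\textbf{Comparison of routes.} The paper never diagonalises. It applies Lemma~\ref{lem:VarLinStat}, which rewrites $\iint J|\mathscr K_n|^2$ as a planar integral against the modified kernel $\hat K_n$. It then uses off-diagonal kernel bounds from \cite{AmeurHedenmalmMakarov2010} to localise to an annulus of width $\mathcal O(\log n/\sqrt n)$, where $\hat K_n\approx n^{d-1}K_{n+d-1}$. Finally it quotes the $d=1$ result of \cite{AkemannByunEbke2023}.

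You instead use that restriction to the ball is diagonal in the monomial basis, so that exactly
\[
\mathrm{Var}\,N_n^d(a)=\sum_{j<n}\binom{j+d-1}{d-1}p_j(a)\bigl(1-p_j(a)\bigr).
\]
You then redo the one-dimensional erfc/Riemann-sum analysis with the multiplicity weight. This is more elementary and needs no kernel estimates. It also makes the constant transparent: the $d$-dimensional mode $j$ has the same radial profile $r^{2j+2d-1}e^{-nV(r)}$ as the planar mode $j+d-1$, so directly $\mathrm{Var}\,N_n^d\sim\frac{n^{d-1}}{(d-1)!}\mathrm{Var}\,N^1_{n+d-1}$.

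A small slip: your first display should not carry the factor $2$. The correct variance formula has a factor $\frac12$ in front of \eqref{eq:defVar}, so $\mathrm{Var}\,N_n^d(a)=\int_{|z|\le a}\int_{|w|>a}|\mathscr K_n|^2$. Your later identity is the right one.

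\textbf{The deferred step.} Your computation gives
\[
\lim_{n\to\infty}\frac{\mathrm{Var}\,N_n^d(a_n(\delta))}{n^{d-1}\sqrt n}=\frac{f(\delta)\sqrt{\Delta Q(1)}}{\sqrt\pi\,(d-1)!}.
\]
The right-hand side of the statement uses $\det\partial\bar\partial\mathscr Q(z_0)=\Delta Q(1)$ and $|\mathbb S^{2d-1}|=2\pi^d/\Gamma(d)$. This is the only reading compatible with $d=1$, where the formula must reduce to \cite{AkemannByunEbke2023}. With it, the stated right-hand side equals $\pi^{d-1}$ times your limit. No care with the $\pi^{-1}$ in $dA$ can absorb this factor, because $\sum_\alpha p_\alpha(1-p_\alpha)$ does not depend on how the reference measure is normalised. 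So, carried out explicitly, your argument does not prove the displayed identity for $d\ge2$, and you should not assert that the constants match.

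Independent checks all support your constant:
\begin{itemize}
\item It reduces to \cite{AkemannByunEbke2023} for $d=1$.
\item For $\mathscr Q=|z|^2$ and $\delta\to-\infty$, where $f(\delta)\to1$, it reproduces the surface law $\frac{n^{d-1}\sqrt n}{(d-1)!\sqrt\pi}$. This follows from the bulk kernel $|\mathscr K_n(z,w)|^2\approx n^{2d}e^{-n|z-w|^2}$: the boundary area $|\mathbb S^{2d-1}|$ times the half-space flux $\frac12\pi^{-d-1/2}n^{d-1/2}$.
\item It uses the same multiplicity $\binom{j+d-1}{d-1}\sim n^{d-1}/(d-1)!$ that the paper itself uses for the entanglement entropy.
\end{itemize}

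The paper's proof does not settle the constant either. Lemma~\ref{lem:VarLinStat} carries a spurious factor $\pi^{2d}$ on its left-hand side. At $d=1$, where $\hat K_n=K_n=\mathscr K_n$, it would read $\pi^2X\approx X$. After the angular integration, the prefactor should be $4/\Gamma(d)$, not $4/(\pi^{2d}\Gamma(d))$. The final display of the paper's proof then leads to yet another power of $\pi$.

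The correct conclusion of your argument is the stated formula with $|\mathbb S^{2d-1}|$ replaced by $\pi^{1-d}|\mathbb S^{2d-1}|=2\pi/\Gamma(d)$. You should state that explicitly and flag the discrepancy.
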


\begin{proof}
Now we consider 
\[
J(|z|,|w|) = (\mathfrak{1}_{[0,a_n(\delta)]}(|z|)-\mathfrak{1}_{[0,a_n(\delta)]}(|w|))^2.
\]
Note that the first integral on the right-hand side of \eqref{eq:VarInts} is $0$ in this case. 
By Lemma \ref{lem:VarLinStat}, first for $d\geq 1$ and then for $d=1$, we have
\begin{multline*}
\frac1{n^{d-1}} \pi^{2d} \Gamma(d) \mathrm{Var} \, N_n^d(a_n(\delta))\\
= (1+\mathcal O(\frac1{\sqrt n}))\iint\limits_{1-\epsilon \frac{\log n}{\sqrt n}\leq |z|,|w|\leq 1+\epsilon \frac{\log n}{\sqrt n}} J(|z|, |w|) \left|K_{n+d-1}(z,w)\right|^2 dA(z) dA(w)\\
 + \mathcal O(e^{-c\log^2 n})\\
 = \pi^2 \mathrm{Var} \, N_n^1(a_n(\delta))
 + \mathcal O(e^{-c\log^2 n})
\end{multline*}
as $n\to\infty$, for some $c>0$. But for $d=1$, the result is already known \cite{AkemannByunEbke2023}. 
\end{proof}

Note that by \cite{MarzoMolagOrtegaCerda2026}, we know that the error is at most of order $1/\sqrt{\log n}$. Moreover, for general potentials, based on Theorem 1.2 in \cite{MarzoMolagOrtegaCerda2026}, one would expect with a suitable microscopic dilation of the droplet to find the limit
\[
\frac{f(\delta)}{2\pi \sqrt\pi} \int_{\partial S_{\mathscr Q}} \sqrt{\det\partial\bar\partial \mathscr Q(z)} d\psi_Q(z),
\]
for some measure $d\psi_Q(z)$. For $d=1$ this measure is the Harmonic measure at $\infty$, it will be interesting to find out what it needs to be replaced by when $d>1$. To extend such results to general potentials (not necessarily rotational symmetric), one would need to understand the kernel asymptotics near the edge but off-diagonally, and obtain a result similar to \cite{AmeurCronvall2022}, which we intend to investigate in a future work.\\


Finally, we briefly comment on the relation between the number variance and entanglement entropy in certain models (in the edge case). 
For $d$=1 the Ginibre ensemble $Q(z)=|z|^2$ has a quantum mechanical interpretation, it describes the locations of noninteracting Fermions in a rotating trap in two dimensions, with repulsion caused by the Pauli exclusion principle. In \cite{LacroixMajumdarSchehr2019} it was shown that, given a concentric disk, the associated number variance and entanglement entropy scale proportionally. This model is easily generalized to $d>1$ by taking a tensor product of (planar) Hamiltonians, corresponding to $\mathscr Q(z)=|z|^2$, which is a model of noninteracting Fermions in $\mathbb C^d$ in a rotating (hypersurface) trap. Alternatively, it may be interpreted as a quantum Hall fluid. We can follow the same approach as in \cite{LacroixMajumdarSchehr2019}. 
The entanglement entropy (or Rényi entropy) corresponding to a ball $\{z\in\mathbb C^d : |z|\leq a\}$ can be expressed as
\[
S_q(n,a) = \frac1{1-q} \mathrm{Tr} \, \log\left(\mathbb A^q+(\mathbb I-\mathbb A)^q\right),
\]
where $q>1$ and $\mathbb A$ denotes the overlap matrix, which is the $N_n^d\times N_n^d$ matrix with components
\[
\mathbb A_{j,j'} = \int_{|z|\leq a} \mathscr P_j(z) \overline{\mathscr P_{j'}(z)} e^{-n \mathscr Q(z)} \, d\omega(z), \qquad j,j'\in J_n.
\]
As $q\to 1^+$ one obtains the von Neumann entropy.
By Lemma \ref{lem:BasisRadial}, this matrix is in fact diagonal and its eigenvalues are given by
\[
\lambda_j(a) = 2\frac{n^{|j|+d}}{\Gamma(|j|+d)} \int_0^a e^{-n r^2} \, r^{2|j|+2d-1} \, dr
= \frac{\gamma(|j|+d, n a^2)}{\Gamma(|j|+d)}, \quad j\in J_n.
\]
By formula 8.12.18 in the NIST handbook \cite{NIST:DLMF} we know that the regularized lower incomplete gamma function satisfies
\begin{align*}
\lambda_j(a_n(\delta)) \sim 
\begin{cases}
\displaystyle\frac12 \mathrm{erfc}\left(\frac{|j|-n a_n(\delta)^2}{\sqrt{2n} a_n(\delta)}\right), & j\leq n a_n(\delta)^2\\
\displaystyle 1- \frac12 \mathrm{erfc}\left(\frac{|j|-n a_n(\delta)^2}{\sqrt{2n} a_n(\delta)}\right), & j\geq n a_n(\delta)^2
\end{cases}
\end{align*}
for $j=n+\mathcal O(\sqrt{n\log n})$ as $n\to\infty$. One may argue that only indices in this range contribute to the dominant order, and then a Riemann sum argument gives us that 
\begin{align*}
S_q(n,a_n(\delta)) &=\frac1{1-q}\sum_{|j|<n} \log\left(\lambda_j(a_n(\delta))^q+(1-\lambda_j(a_n(\delta)))^q\right)\\
&\sim \frac1{1-q}\sum_{j=n-\sqrt{n\log n}}^{n-1} \binom{j+d-1}{d-1} \log\left(\lambda_j(a_n(\delta))^q+(1-\lambda_j(a_n(\delta)))^q\right)\\
&\sim \frac{n^{d-1}\sqrt n}{(d-1)!} \frac{1}{1-q} \int_{\delta}^\infty \log\left(\frac1{2^q}\mathrm{erfc}(-x)^q+\frac1{2^q}\mathrm{erfc}(x)^q\right) \, dx
\end{align*}
as $n\to\infty$. Hence, as in the $d=1$ case in \cite{LacroixMajumdarSchehr2019}, the number variance (see Theorem \ref{thm:edgeScalingCounting} above) and entanglement entropy scale proportionally in the edge regime. The significance of this observation is that entanglement entropy, which is hard to measure directly, can be measured indirectly via the number variance.

\section{Edge point bulk degeneracy: kernels with $o(n)$ terms} \label{sec:4}

As explained in the introduction, some regular edge points $z_0\in \partial S_{\mathscr Q}$ show a certain type of bulk degeneracy. One or more of their coordinates behave as though they are part of the bulk. This is especially explicit in the proof of Lemma \ref{lem:bulkDegen}. 
It is easy to see that, besides the factorized setting in Section \ref{sec:2}, there are many other ways to construct models which exhibit a similar behavior. In order to eventually prove Conjecture \ref{con:1} and Conjecture \ref{con:2}, it appears that we need to understand this bulk degeneracy phenomenon. To treat such edge points, we have to understand the local behavior of the planar kernels. Given a planar potential $Q:\mathbb C\to\mathbb R$ (satisfying \eqref{eq:growthQ}), with associated $n$-dependent planar orthogonal polynomials $P_j$ (of degree $j$ and positive leading coefficient) that satisfy
\[
\langle P_j, P_k\rangle = \int_{\mathbb C} P_j(z) \overline{P_k(z)} e^{-n Q(z)} dA(z) = \delta_{jk}, \qquad j,k=0,1,\ldots,
\]
what we need to understand is the partial kernel
\begin{align*}
\sum_{j=0}^{m_n} P_j(z) \overline{P_j(w)},
\end{align*}
where $m_n$ grows slower than $n$. In our specific case we have that $m_n$ grows like $\sqrt n\log n$, and we only need to understand the partial kernel on the diagonal $\xi=\eta$. Nevertheless, it is hardly any extra work to consider the more general case where $m_n=o(n)$ and not necessarily $\xi=\eta$. 

As mentioned in the introduction, there is a standard approach to derive such results using Hörmander's $\bar\partial$-method, but we will devise a different approach, that seemingly gives us more information. It starts with the well-known fact that the unweighted kernel satisfies the following pointwise extremal property.
\begin{align} \label{eq:partialKerSup}
\sum_{j=0}^{m_n} |P_j(z)|^2 &= \sup_{p\in \mathcal H_{m_n}\setminus\{0\}} \frac{|p(z)|^2}{\int_{\mathbb C} |p(w)|^2 e^{-n Q(w)} dA(w)}
\end{align}
where $\mathcal H_{m_n}$ denotes the space of all polynomials of degree $\leq m_n$.
We start with an off-diagonal decay lemma for the inner products.
We shall use the following notation for the monomials 
\[
e_j(z) = z^j, \qquad j=0,1,\ldots
\]

\begin{lemma} \label{lem:IjkEst}
Let $Q:\mathbb C\to\mathbb R$ be a real-analytic function with a unique minimum at $z=0$, satisfying the growth condition
\begin{align*}
\liminf_{|z|\to\infty} \frac{Q(z)}{\log |z|^2}>1+\epsilon
\end{align*}
for some fixed $\epsilon>0$. 
Then there are constants $0<C_1< 1< C_2$, depending only on $Q$ and $\epsilon$, such that
\[
\left(\frac{C_1}{\Delta Q(0)}\frac{j+k}{2n}\right)^{|j-k|}\leq 
\left|\frac{\langle e_j, e_k\rangle}{\langle e_j, e_j\rangle}\right| \leq \left(\frac{C_2}{\Delta Q(0)}\frac{j+k}{2n}\right)^{|j-k|}
\]
uniformly for nonnegative integers $j, k$ such that $0\leq j+k\leq 2(1+\epsilon) n$.\\
Furthermore, we have for all $0< j \leq (1+\epsilon) n$ that
\[
\left(\frac{C_1}{\Delta Q(0)}\frac{j}{n}\right)^{j}\leq 
\left|\frac{\langle e_j, e_j\rangle}{\langle e_0, e_0\rangle}\right| \leq \left(\frac{C_2}{\Delta Q(0)}\frac{j}{n}\right)^{j}
\]
\end{lemma}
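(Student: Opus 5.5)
The plan is to treat both estimates as Laplace-type asymptotics for the moments
\[
\langle e_j,e_k\rangle=\int_{\mathbb C} z^j\overline z^{k} e^{-nQ(z)}\,dA(z),
\]
comparing $Q$ with its quadratic model at the minimum. Normalize $Q(0)=0$. By real-analyticity, near $0$ we write
\[
Q(z)=\Delta Q(0)|z|^2+\mathrm{Re}\,h(z)+R(z),
\]
where $h$ is holomorphic with $h(z)=\mathcal O(z^2)$, and $R(z)=\mathcal O(|z|^3)$ collects the non-harmonic higher-order terms. Since $0$ is the unique minimum and the growth condition holds with margin $\epsilon$, there is a radius $\rho>0$ and a $c>0$ with $Q\geq c\min(|z|^2,1)$ on $\mathbb C$. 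Moreover $Q(z)\geq(1+\epsilon/2)\log|z|^2$ for $|z|$ large. Hence, for $j+k\leq 2(1+\epsilon)n$, the contribution of $|z|\geq\rho$ to any moment is at most $e^{-c'n}$ times a bounded factor. (The integrand $|z|^{j+k}e^{-nQ}$ is integrable at infinity precisely because $(j+k)/2<(1+\epsilon/2)n$ after shrinking $\epsilon$.)

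\textbf{Diagonal estimate.} With the substitution $z=w/\sqrt{n\Delta Q(0)}$, the main part is $(n\Delta Q(0))^{-j-1}\int_{|w|\lesssim\sqrt n}|w|^{2j}e^{-|w|^2-n\,\mathrm{Re}\,h-nR}\,dA(w)$. The model integral equals $\Gamma(j+1)$, and by Stirling $\Gamma(j+1)=(j/e)^j\,\mathcal O(\sqrt j)$. The ratio $\langle e_j,e_j\rangle/\langle e_0,e_0\rangle$ then lies between $(C_1 j/(n\Delta Q(0)))^j$ and $(C_2j/(n\Delta Q(0)))^j$: the factors $e^{\pm1}$ and polynomial factors in $j$ can be absorbed into $C_1,C_2$ because they are raised to the power $j\geq1$. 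The perturbation $n\,\mathrm{Re}\,h+nR$ has to be handled in two regimes.
\begin{itemize}
\item For $j\leq\delta n$ the mass sits at $|z|\asymp\sqrt{j/n}$, where $n|R|$ and $n|h|$ are $\mathcal O(j^{3/2}/\sqrt n)+\mathcal O(j)$. This distortion is of the form $e^{\mathcal O(j)}$, which is again absorbable into $C^j$.
\item For $\delta n\leq j\leq(1+\epsilon)n$ I would not expand at all. Two-sided bounds $c\min(|z|^2,1)\leq Q\leq C|z|^2$ on a disc, together with the growth at infinity, already give moment bounds of the form $(C^{\pm1}j/n)^j$ by comparison with Gaussian or radial model weights.
\end{itemize}

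\textbf{Off-diagonal estimate.} Suppose $k=j+m$ with $m>0$. Angular integration kills the rotation-invariant part, so only the non-radial parts of $e^{-n\mathrm{Re}\,h-nR}$ contribute. I would expand $e^{-n\mathrm{Re}\,h}$ in powers of $z$ and $\overline z$. The coefficient that pairs $z^j$ with $\overline z^{j+m}$ is a polynomial in $n$ and the Taylor coefficients of $h$. After scaling, each unit of angular frequency costs a factor of order $\sqrt{n}\cdot(\sqrt{(j+k)/n})^{\,\cdot}$, which should produce the bound $(C(j+k)/(2n\Delta Q(0)))^{m}\langle e_j,e_j\rangle$. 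The crude version of the upper bound follows more simply from Cauchy--Schwarz, writing $z^j\overline z^k=|z|^{j+k}e^{i(j-k)\arg z}$ and integrating by parts $m$ times in the angle. Each angular derivative of $e^{-nQ}$ gives $n\,\partial_\theta Q=\mathcal O(n|z|^2)$, which is of order $(j+k)$ on the relevant circle; this yields the claimed power. As in the diagonal case, this has to be done separately for small and for comparable-to-$n$ sizes of $j+k$.

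\textbf{Main obstacle.} I expect the hard part to be the \emph{lower} bound off the diagonal. For rotationally symmetric $Q$ one has $\langle e_j,e_k\rangle=0$ when $j\neq k$, so the lower bound cannot hold literally in that case. It requires a nondegeneracy of the non-radial part, for example a nonzero coefficient of $z^2$ in $h$. Where cancellation is possible, it would have to be restricted to the relevant frequencies, or read as a statement about the leading nonvanishing term. I would therefore first secure the two-sided diagonal bound and the off-diagonal upper bound, which are what the subsequent Lagrange-multiplier approximation of \eqref{eq:partialKerSup} actually needs. I would then check under which nondegeneracy assumption the off-diagonal lower bound follows from the leading term of the angular expansion.
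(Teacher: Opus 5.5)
You are right that the off-diagonal lower bound cannot hold as stated. For $Q(z)=|z|^2$ one has $\langle e_j,e_k\rangle=0$ for all $j\neq k$, while the left-hand side is positive. The paper's proof does not establish this bound either: it only bounds $|I_{jk}|$ from above and then says ``we extend this by symmetry''. A nonzero $z^2$-coefficient in $h$ would not rescue it, since if $Q(-z)=Q(z)$, every $\langle e_j,e_k\rangle$ with $j-k$ odd vanishes. As shown below, that same coefficient is also incompatible with the upper bound.

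Your diagonal argument by comparison with Gaussian weights is sound in outline, and simpler than the paper's Bochner-coordinate and Laplace-method route. Two caveats apply. First, for the tail, use the strict $\liminf$ to get $Q\ge(1+\epsilon')\log|z|^2$ with some $\epsilon'>\epsilon$; with $1+\epsilon/2$ your bound is not integrable for $j+k$ near $2(1+\epsilon)n$. Second, and more seriously, uniqueness of the minimum does not give $Q\ge c\min(|z|^2,1)$. For $Q=2x^2+|z|^4$ one has $\Delta Q(0)=1$ but $\langle e_1,e_1\rangle/\langle e_0,e_0\rangle\asymp n^{-1/2}$, so even the diagonal claim needs a nondegenerate minimum.

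The genuine gap is the off-diagonal upper bound you plan to secure: it is false, and your angular integration by parts is where the argument breaks. Each integration by parts gains $1/|j-k|$ but costs $n\,\partial_\theta Q=\mathcal O(n|z|^2)$, which is of order $j+k$ on the relevant circle. So no factor $n^{-|j-k|}$ ever appears. You gain nothing over $|\langle e_j,e_k\rangle|\le\|e_j\|\,\|e_k\|$ (with $\|e_j\|=\langle e_j,e_j\rangle^{1/2}$), whose size relative to $\langle e_j,e_j\rangle$ is about $(Ck/n)^{(k-j)/2}$ for $k>j$ --- half the claimed exponent.

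Concretely, take $Q=|z|^2+b\,\mathrm{Re}(z^2)=(1+b)x^2+(1-b)y^2$ with $0<b<1$. It is real-analytic with a unique nondegenerate minimum and $\Delta Q(0)=1$. An exact Gaussian computation gives $\langle e_0,e_2\rangle/\langle e_0,e_0\rangle=-b/(n(1-b^2))$, whereas the lemma asserts a bound $C_2^2/n^2$. Your own frequency expansion shows this at once: the term $-\tfrac{n}{2}bz^2$ of $e^{-n\mathrm{Re}\,h}$ couples $1$ with $\overline z^{\,2}$ at relative size $b/n$.

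Harmonic terms are not the only culprit. For $Q=|z-az^2|^2+|z|^4$, the cubic term gives $|\langle e_j,e_{j+1}\rangle|/\langle e_j,e_j\rangle=|a|(j+1)(j+2)n^{-1}(1+o(1))$ for $1\ll j\ll n^{1/3}$. This exceeds $C_2(2j+1)/(2n)$.

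So you are not missing an idea from the paper. Its estimate $|I_{jk}|\le C\int|z|^{j+k}e^{-n|z|^2+2\lambda m|z|}\,dA$ contains no angular cancellation, so it cannot beat the exponent $|j-k|/2$. The exponent $|j-k|$ appears only through the mis-scaled lower bound $I_{jj}\ge 2c(m/n)^{2j+1}\tilde I_j$, where the correct scale is $(j/n)^{j+1}$. Moreover, its normal form $Q\circ h=|z|^2+\mathcal O(|z|^4)$ does not exist when $Q$ has a harmonic part such as $b\,\mathrm{Re}(z^2)$.

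Finally, the provable Cauchy--Schwarz-rate bound does not deliver what the Lagrange-multiplier step needs. In the Gaussian example, $|\langle e_0,e_2\rangle|/(\|e_0\|\,\|e_2\|)$ is a nonzero constant independent of $n$, so the monomials are not asymptotically orthogonal.
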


\begin{proof}
We may assume without loss of generality that $Q(0)=0$ and $\Delta Q(0)=1$. Necessarily, the first derivatives of $Q$ vanish at $0$. There exists an $\epsilon'>\epsilon$ such that
\begin{align*}
\liminf_{|z|\to\infty} \frac{Q(z)}{\log |z|^2}\geq 1+\epsilon'.
\end{align*}
Then for some $R>0$ we have
\[
\int_{|z|\geq R} |z|^{j+k} e^{-n Q(z)} dA(z)
\leq \int_{|z|\geq R} \frac{dA(z)}{|z|^{2n (\epsilon'-\epsilon)}}
= \frac{R^{-2n(\epsilon'-\epsilon)+2}}{n(\epsilon'-\epsilon)-2} 
\]
when $n$ is big enough. 
Now we use Bochner normal coordinates, i.e., on a small enough neighborhood, there exists a (bi)holomorphic map $h$ such that
\[
Q(h(z)) = |z|^2 + f(z),
\]
where $f$ is a real-analytic function such that $f(z)=\mathcal O(|z|^4)$ (and furthermore, in the expansion there are no holomorphic powers of $z$ of order $\geq 2$) \cite{Bochner1947}. 
Now consider the map $\psi$ defined as the inverse of the map
\[
z \mapsto z \sqrt{1+f(z)/|z|^2}.
\]
On a small enough neighborhood, one may check that this map is diffeomorphic. So let us divide the remaining integration region into a region $(h\circ \psi)(B(0,r))$ and the region $B(0,R)\setminus (h\circ \psi)(B(0,r))$, where $r>0$ is picked small enough. Since $f$ has a unique minimum, the contribution on the latter region will be exponentially small. 
We conclude that $\langle e_j,e_k\rangle = I_{jk}+\mathcal O(e^{- \eta n})$ for some constant $\eta>0$ that depends only on $Q$, where
\begin{align*}
I_{jk} = \int_{B(0,r)} (h\circ \psi)(z)^j \overline{(h\circ \psi)(z)}^k e^{-n |z|^2} |h'(\psi(z))|^2 |\det D\psi(z)| dA(z).
\end{align*}
Notice in particular that $(h\circ \psi(z))^j = z^j (1+g(z))^j$ 
for some real-analytic function $g$ with $g(0)=0$, and
\[
h'(\psi(z)) \det \psi(z) = \det \psi(0) + \tilde g(z)
\]
for some real-analytic function $\tilde g$ with $\tilde g(0)=0$. 
Suppose that $j\leq k$ and write $m=\frac{j+k}{2}$. For some constants $\lambda>0$ and $C>0$ that depend only on $Q$ we have
\begin{align*}
|I_{jk}| \leq C \int_{\mathbb C} |z|^{2m} e^{-n |z|^2+2\lambda m |z|} dA(z)
= 2 C \left(\frac{m}{n}\right)^{m+1} \tilde I_m(\alpha),
\end{align*}
where 
\begin{align*}
\tilde I_m(\alpha) = \int_{-\alpha}^\infty e^{-m f_\alpha(r)} (r+\alpha) dr,
\qquad f_\alpha(r) = r^2 - 2\log(r+\alpha),
\end{align*}
in our case with the explicit choice
\[
\alpha = \alpha_{m,n} =\lambda \sqrt\frac{m}{n}.
\]
This follows by a combination of translation and rescaling of the integration variables. 
Next we apply Laplace's method for $\alpha\in\mathbb R$ in compact sets, and $m\to\infty$. 
The saddle point function has a unique minimum at $r_+(\alpha) = \sqrt{1+\frac14\alpha^2}-\frac12\alpha$ and Laplace's method yields
\begin{align*}
\int_{-\alpha}^\infty e^{-m f_\alpha(r)} (r+\alpha) dr
= \sqrt{\frac{\pi}{m}} \frac1{(1+\frac14\alpha^2)^{1/4}} e^{-m f_\alpha(r_+(\alpha))} (r_+(\alpha)+\alpha+\mathcal O(1/m))
\end{align*}
as $m\to\infty$, where, with a little care, one can show that the convergence is uniform for $\alpha$ in compact sets. 
Using in particular the estimate
\[
\alpha_{m+1,n}-\alpha_{m,n} \leq \frac{\lambda}{4\sqrt{m n}}.
\]
one derives that
\[
\frac{\tilde I_{m+1}(\alpha_{m+1,n})}{\tilde I_m(\alpha_{m,n})} = 1+\mathcal O\left(\sqrt\frac{m}{n}\right)
\]
as $m\to\infty$, uniformly for $m\leq 2(1+\epsilon)n$. 
In particular, there exist constants $0<C_1\leq 1\leq C_2<1$ such that uniformly for all nonnegative integers $m$ we have
\[
C_1 \leq \left|\frac{\tilde I_{m+1}(\alpha_{m+1,n})}{\tilde I_{m}(\alpha_{m,n})}\right|\leq C_2.
\]
On the other hand, when $j=k$ there exists a constant $c>0$ (depending only on $Q$) such that
\[
I_{jj} \geq c\int_{\mathbb C} |z|^{2j} e^{-n|z|^2-\epsilon|z|} dA(z)
= 2 c \left(\frac{m}{n}\right)^{2j+1} \tilde I_{j},
\]
where we possibly pick $\epsilon>0$ larger. Now assume that $k>j$. Then we have by the above
\[
|I_{jk}| \leq 2 C \left(\frac{m}{n}\right)^{m+1} \left(C_2\right)^{k-j} \tilde I_j(\alpha)
\leq \frac{C}{c} \left(C_2 \frac{m}{n}\right)^{k-j} I_{jj}.
\]
We extend this by symmetry and obtain
\[
\frac{c}{C}\left(C_1 \frac{m}{n}\right)^{|j-k|} \leq \left|\frac{I_{jk}}{I_{jj}}\right| \leq \frac{C}{c} \left(C_2 \frac{m}{n}\right)^{|j-k|},
\]
for all nonnegative integers $j, k$ such that $j+k\leq (1+\epsilon) n$. By picking $C_2$ slightly larger and $C_1$ slightly smaller one may effectively set $c/C=1$. Since the difference between the $I_{jk}$ and $\langle e_j,e_k\rangle$ is exponentially small as $n\to\infty$, we find the stated estimates. 
\end{proof}


Lemma \ref{lem:IjkEst} has a crucial consequence, which will become clear in the proof of the following proposition. We would like to point out that here the advantage with respect to Hörmander's method is apparant, we obtain an asymptotic formula that is uniform for $z\in\mathbb C$. Somewhat surprisingly, after applying Lemma \ref{lem:IjkEst}, the result follows simply from the Lagrange multiplier method. 

\begin{proposition} \label{prop:planarKernelApprox}
Let $Q:\mathbb C\to\mathbb R$ be a real-analytic function with a unique minimum at $z=0$. Then there exists a $\lambda>0$ such that 
\[
\frac1{n\Delta Q(0)} \sum_{j=0}^{m_n} |P_j(z)|^2
= \left(1+\mathcal O\left(\frac{m_n}{n}\right)\right) \sum_{j=0}^{m_n} \frac{\langle e_0,e_0 \rangle}{\langle e_j,e_j \rangle} |z|^{2j}
\]
uniformly for all $z\in\mathbb C$ as $n\to\infty$, under the condition $0\leq m_n\leq \lambda n$.
\end{proposition}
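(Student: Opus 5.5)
The plan is to work directly with the extremal characterization \eqref{eq:partialKerSup} on the space $\mathcal H_{m_n}$, expressed in the monomial basis $e_0,\ldots,e_{m_n}$. Write $G=(\langle e_j,e_k\rangle)_{j,k=0}^{m_n}$ for the Gram matrix, and let $D=\operatorname{diag}(\langle e_j,e_j\rangle)$ be its diagonal. The plan is to show that $D^{-1/2}GD^{-1/2}=\mathbb I+E$ with $\|E\|_{\mathrm{op}}=\mathcal O(m_n/n)$. Once this is in hand, the result follows from linear algebra alone.

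The starting point is the reproducing-kernel identity
\[
\sum_{j=0}^{m_n}|P_j(z)|^2=v(z)^\dagger G^{-1}v(z),
\]
where $v(z)=(1,z,\ldots,z^{m_n})^T$; this is the Lagrange multiplier solution of the extremal problem. Setting $u=D^{-1/2}v(z)$, we have
\[
v^\dagger G^{-1}v=u^\dagger(\mathbb I+E)^{-1}u .
\]
If $\|E\|\leq\frac12$, then $(\mathbb I+E)^{-1}$ lies between $(1\pm 2\|E\|)\mathbb I$ in the sense of Hermitian forms. Consequently
\[
v^\dagger G^{-1}v=\bigl(1+\mathcal O(\|E\|)\bigr)\,|u|^2,
\qquad
|u|^2=\sum_{j=0}^{m_n}\frac{|z|^{2j}}{\langle e_j,e_j\rangle}.
\]
This holds uniformly in $z\in\mathbb C$, because it is a comparison of quadratic forms and involves no dependence on $z$ in the constants. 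Multiplying by $\langle e_0,e_0\rangle$ and using $\langle e_0,e_0\rangle=\int e^{-nQ}\,dA\sim 1/(n\Delta Q(0))$ by Laplace's method (with normalization $Q(0)=0$) gives the stated formula. The factor $(1+\mathcal O(1/n))$ from Laplace's method is absorbed into $\mathcal O(m_n/n)$ when $m_n\geq1$; the case $m_n=0$ is trivial.

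The main step is bounding $\|E\|$, where $E_{jk}=\langle e_j,e_k\rangle/\sqrt{\langle e_j,e_j\rangle\langle e_k,e_k\rangle}$ for $j\neq k$. I would use Lemma \ref{lem:IjkEst}, which gives
\[
\Bigl|\frac{\langle e_j,e_k\rangle}{\langle e_j,e_j\rangle}\Bigr|\leq\Bigl(\frac{C_2}{\Delta Q(0)}\frac{j+k}{2n}\Bigr)^{|j-k|}.
\]
To symmetrize, take the geometric mean of this bound for the pair $(j,k)$ and the pair $(k,j)$; this bounds $|E_{jk}|$ by the same quantity. Since $j,k\leq m_n\leq\lambda n$, we get $|E_{jk}|\leq q^{|j-k|}$ with $q=C_2m_n/(n\Delta Q(0))$. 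Choosing $\lambda$ small enough guarantees $q\leq\frac14$. The Schur test then gives
\[
\|E\|\leq\max_j\sum_{k\neq j}|E_{jk}|\leq\frac{2q}{1-q}=\mathcal O(m_n/n).
\]
The condition $j+k\leq 2(1+\epsilon)n$ required by the lemma holds trivially in this range.

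The obstacle I anticipate is that the Gram entries must be controlled uniformly for all $j,k\leq m_n$, including $j,k$ of order $1$, and the multiplicative decay must be good enough that the row sums are $\mathcal O(m_n/n)$ rather than merely $\mathcal O(1)$. This is exactly the content of Lemma \ref{lem:IjkEst}, so here I rely on that lemma. A second subtlety is that the lemma's constants could in principle spoil the relative error for very small $j+k$. However, the bound scales with $(j+k)/n\leq 2m_n/n$, so the bound stated above already suffices.
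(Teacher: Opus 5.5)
Your reduction is the paper's proof in cleaner form. The paper also uses Lemma~\ref{lem:IjkEst} to show $\|p\|^2=(1+\mathcal O(m_n/n))\sum_j|a_j|^2\langle e_j,e_j\rangle$, and then solves the diagonal extremal problem by Lagrange multipliers, which is exactly your comparison of $v^\dagger G^{-1}v$ with $|D^{-1/2}v|^2$. Your Schur test and geometric-mean symmetrization are a more transparent version of the paper's off-diagonal estimate.

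Granting Lemma~\ref{lem:IjkEst}, the only step of your own that fails is the Laplace asymptotics $\langle e_0,e_0\rangle\sim 1/(n\Delta Q(0))$. However, the lemma itself is false for general real-analytic $Q$ with a unique minimum (its lower bound already fails for $Q=|z|^2$, where $\langle e_j,e_k\rangle=0$ for $j\neq k$), and so is the statement. The gap is therefore genuine, and the paper's proof shares it. Take $Q(z)=|z|^2-t\,\mathrm{Re}\,z^2=(1-t)x^2+(1+t)y^2$ with $0<t<1$, so that $\Delta Q(0)=1$. Exact Gaussian integrals give
\[
\langle e_0,e_0\rangle=\frac{1}{n\sqrt{1-t^2}},\qquad
\frac{\langle e_0,e_2\rangle}{\langle e_0,e_0\rangle}=\frac{t}{n(1-t^2)},\qquad
|E_{02}|^2=\frac{t^2}{2+t^2}.
\]
This has three consequences.
\begin{itemize}
\item With $Q(0)=0$, Laplace's method gives $2/(n\sqrt{\det\nabla^2Q(0)})$, which equals $1/(n\Delta Q(0))$ only when the real Hessian at $0$ is a multiple of the identity.
\item The lemma's bound $(C_2/n)^2$ for the $(0,2)$ ratio fails.
\item $\|E\|\geq|E_{02}|$ stays of order one, so your key estimate $\|E\|=\mathcal O(m_n/n)$ is false.
\end{itemize}
Concretely, at $z=0$ the left-hand side equals $\sqrt{1-t^2}$ for $m_n=0$ and $\frac12(2+t^2)\sqrt{1-t^2}<1$ for $m_n=2$, while the right-hand side is $1+\mathcal O(1/n)$. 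For $m_n=\lambda n$, standard bulk asymptotics inside the elliptic droplet $S_{Q,\lambda}$ show that the unweighted left-hand side at $z=r$ and at $z=ir$ (both interior points) differ by the factor $e^{n(Q(r)-Q(ir))}=e^{-2ntr^2}$, whereas the right-hand side is radial.

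An isotropic Hessian does not rescue the rate either. Take $Q(z)=|z|^2\bigl(1+\epsilon\,\mathrm{Re}(z)\,e^{-|z|^2}\bigr)$ with small $\epsilon>0$. Then $\langle e_0,e_1\rangle=\langle e_1,e_0\rangle=-\epsilon n^{-2}(1+\mathcal O(1/n))$. Hence $|E_{01}|\asymp\epsilon n^{-1/2}$ and $|\langle e_1,e_0\rangle/\langle e_1,e_1\rangle|\approx\epsilon$, contradicting the lemma for the pair $(1,0)$. The exact $2\times 2$ computation for $m_n=1$ gives $v^\dagger G^{-1}v/|D^{-1/2}v|^2=1\pm\epsilon n^{-1/2}+\mathcal O(1/n)$ at $z=\pm n^{-1/2}$, not $1+\mathcal O(1/n)$.

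So your linear algebra is sound, but a hypothesis forcing $D^{-1/2}GD^{-1/2}$ to be close to the identity at the required rate is missing, and Lemma~\ref{lem:IjkEst} cannot supply it. If $Q$ is radial (with $Q(0)=0$ and $\Delta Q(0)>0$), then $E=0$ exactly, and your argument together with Laplace's method for $\langle e_0,e_0\rangle$ proves the statement. For non-radial $Q$ the radial right-hand side has to be replaced, a point the paper's proof does not address either.
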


\begin{proof}
We will estimate the expressions in the supremum in \eqref{eq:partialKerSup}.
Write $p(z)=a_{m_n} z^{m_n}+ \ldots a_1 z + a_0$ for arbitrary complex coefficients. Let us also write $J_{jk}=\langle e_j,e_k\rangle$. Consider the $(m_n+1)\times (m_n+1)$ matrix
\[
A_{jk} = \begin{cases}
|a|^j \sqrt{|J_{jk}|}, & k>j,\\
0, & k\leq j.
\end{cases}
\]
Lemma \ref{lem:IjkEst} gives us that
\begin{align*}
\sum_{j=0}^{m_n} |a|^j \sum_{k= j+1}^{m_n} \left|\overline{a_k} J_{jk}\right|
= \mathrm{Tr}(A^\dagger A)= \|A\|^2
&\leq \sum_{j=0}^{m_n-1} \sum_{k={j+1}}^{m_n} |a_j|^2 (C_2 \frac{m_n}{n})^{k-j} |J_{jj}|\\
&\leq C_2 \frac{m_n}{n} \sum_{j=0}^{m_n} |a_j|^2 |J_{jj}|,
\end{align*}
where we make the assumption here and henceforth that $2 C_2 m_n\leq n$.
This estimate shows us that the norm of $p(z)$ is dominated by the diagonal terms, that is
\begin{align*}
\int_{\mathbb C} \left|\sum_{j=0}^{m_n} a_j w^j\right|^2 e^{-n Q(w)} dA(w)
&= \sum_{j=0}^{m_n} |a_j|^2 J_{jj} + 2 \mathrm{Re}\left( \sum_{j=0}^{m_n-1} a_j \sum_{k=j+1}^{m_n} \overline{a_k} J_{jk}\right)\\
&= \left(1+\mathcal O\left(\frac{m_n}{n}\right)\right) \sum_{j=0}^{m_n} |a_j|^2 J_{jj}
\end{align*}
where the implied constant $C_2$ depends only on $Q$. Hence, for any polynomial $p\in\mathcal H_{m_n}\setminus\{0\}$ 
\begin{align*}
\left(1-C_2 \frac{m_n}{n}\right)^{-1} \frac{\left|\sum_{j=0}^{m_n} a_j z^j\right|^2}{\sum_{j=0}^n |a_j|^2 J_{jj}}
&\leq
\frac{|p(\xi)|^2}{\displaystyle \int_{\mathbb C} |p(z)|^2 e^{-n Q(z)} dA(z)}\\
&\leq \left(1+C_2 \frac{m_n}{n}\right)^{-1} \frac{\left|\sum_{j=0}^{m_n} a_j z^j\right|^2}{\sum_{j=0}^n |a_j|^2 J_{jj}} 
\end{align*}
We can determine the maximum of the function appearing in the bounds simply by applying the Lagrange multiplier method, i.e., for fixed $z\in\mathbb C$ we will maximize the function
\[
|p(z)|^2 = \left|\sum_{j=0}^{m_n} a_j z^j\right|^2
= \sum_{j,k=0}^{m_n} a_j \overline{a_k} z^j \overline{z^k}
\]
over $a=(a_0, \ldots, a_{m_n})\in\mathbb C^{m_n+1}$ under the constraint
\[
\sum_{j=0}^{m_n} |a_j|^2 J_{jj} = 1.
\]
Then we find for what $\lambda\in\mathbb R$ there is a solution to the Lagrange multiplier equations
\[
\overline z^k p(z) = \lambda a_k J_{kk}.
\]
We may exclude the case $\lambda=0$, since the kernel is strictly positive on the diagonal.
From this equation we extract that
\[
a_k = a_0 \frac{\overline z^k}{J_{kk}}, \qquad k=0,\ldots,m_n.
\]
Putting this back in the constraint yields 
\[
|a_0|^2 \sum_{k=0}^{m_n} \frac{|z|^{2k}}{J_{kk}} = 1.
\]
Assuming without loss of generality that $a_0>0$, the previous equation gives us
\[
\lambda = \frac{p(z)}{a_0 J_{00}} = \frac{p(z)}{J_{00}} \sqrt{\sum_{k=0}^{m_n} \frac{|z|^{2k}}{J_{kk}}}.
\]
Finally then, the maximum value is given by
\[
|p(z)|^2 = \left|\sum_{j=0}^{m_n} \frac{\overline z^j p(z)}{\lambda J_{jj}} z^j\right|^2
= \sum_{k=0}^{m_n} \frac{J_{00}^2}{J_{kk}} |z|^{2k}.
\]
Indeed, we have as $n\to\infty$ that
\[
J_{00} = \frac1{n\Delta Q(0)} \left(1+\mathcal O\left(\frac{1}{n}\right)\right)
= \frac1{n\Delta Q(0)} \left(1+\mathcal O\left(\frac{m_n}{n}\right)\right).
\]
The result now follows from the extremal property \eqref{eq:partialKerSup}.
\end{proof}

\pagebreak

\begin{proof}[Proof of Theorem \ref{thm:PartialBerg}.]
We may assume without loss of generality that $m_n=o(n)$, since the case where $m_n$ grows proportionally to $n$ is already known (e.g., see \cite{AmeurHedenmalmMakarov2010}). By Lemma \ref{lem:IjkEst} we have
\[
\sum_{j=0}^{m_n} \frac{\langle e_0,e_0 \rangle}{\langle e_j,e_j \rangle} \left|z\right|^{2j} \geq 1+\sum_{j=1}^{m_n} \left(\frac{n \Delta Q(0)}{j C_1}\right)^j \left|z\right|^{2j}
> 1+ \sum_{j=1}^{m_n} \frac{(m_n)^j}{j!}  \left|\frac{n}{m_n} \frac{\Delta Q(0)}{C_1 e} z^2\right|^{j}
\]
where we used the inequality $j!\geq j^j e^{-j+1}>j^j e^{-j}$. It is a well-known fact that
\[
e^{-x}\sum_{j=1}^{m_n} \frac{1}{j!} (m_n)^j x^j
\geq 1 - C \frac{x^{m_n+1}}{(m_n+1)!}
\]
for some constant $C>0$ uniformly for $x$ in compact subsets of $[0,1)$. This is in particular satisfied when $|z|\leq r_Q \sqrt{\frac{m_n}{n}}$ when we pick $r_Q = \sqrt\frac{e}{\Delta Q(0)}$. 
We obviously also have the usual upper bound given by $1$ and we conclude that uniformly for $|z|\leq r_Q$
\[
\lim_{n\to\infty} \frac{e^{-n Q\left(\frac{\sqrt{m_n} z}{\sqrt{n \Delta Q(0)}}\right)}}{n\Delta Q(0)}  \sum_{j=0}^{m_n} \left|P_j\left(\frac{\sqrt{m_n} z}{\sqrt{n\Delta Q(0)}}\right)\right|^2 = 1.
\]
For the second part of the theorem, note that
\[
f_n(z, w) = e^{-n Q\left(\frac{\sqrt{m_n} z}{\sqrt{n\Delta Q(0)}}, \frac{\sqrt{m_n} \, \overline w}{\sqrt{n\Delta Q(0)}}\right)}
\sum_{j=0}^{m_n} P_j\left(\frac{\sqrt{m_n} z}{\sqrt{n\Delta Q(0)}}\right) \overline{P_j\left(\frac{\sqrt{m_n} w}{\sqrt{n\Delta Q(0)}}\right)}
\]
where $Q(\cdot, \cdot)$ denotes the polarization of $Q(\cdot)$, 
converges uniformly to $1$ on the diagonal $z=w$ inside $|z|\leq r_Q$. By a standard polarization argument the convergence then also holds locally uniformly on a neighborhood of the diagonal. Expanding $Q(z,\overline w)-\frac12 Q(z) -\frac12 Q(w)$ in the present scaling, and assuming $m_n=o(n^{2/3})$, one arrives at the result. 
\end{proof}

\vspace{3cm}

\subsection*{Acknowledgments}
\text{ }\\
The author was supported
by the UC3M grant 2024/00002/007/001/023 “Local and global limits of complex-dimensional
DPPs” and is currently supported by the grant PID2024-155133NB-I00, “Orthogonality, Approximation, and Integrability:
Applications in Classical and Quantum Stochastic Processes (ORTH-CQ)” by the Agencia Estatal
de Investigación.
The author thanks KU Leuven for its hospitality during a research visit, and thanks Aron Wennman for several valuable insights, in particular, which orthogonal polynomials should add to the dominant order in the factorized setting. 

\newpage

\begin{appendices}

\section{Appendix: a Gaussian integral identity} \label{sec:A}

\begin{lemma} \label{lem:GaussianErfc}
Let $A$ be a real $d\times d$ symmetric strictly positive definite matrix and let $v\in\mathbb R^d\setminus\{0\}$ and $b\in\mathbb R^d$. Then we have
\begin{multline}
\int_{x\cdot v\geq 0} \exp\left(-\frac12 x\cdot A^{-1} x-b \cdot x\right) d^dx\\
=  \frac12\sqrt{\det 2\pi A}\exp\left(\frac12 b\cdot A b\right)
\mathrm{erfc}\left(\frac{\,\, b\cdot A v}{\sqrt{2 \, v\cdot Av}}\right).
\end{multline}
\end{lemma}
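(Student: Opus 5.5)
The plan is to reduce the integral to a one-dimensional Gaussian integral over a half-line by completing the square and then changing variables.

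First, complete the square in the exponent. Writing $x = y - Ab$, we have
\[
-\tfrac12 x\cdot A^{-1}x - b\cdot x = -\tfrac12 y\cdot A^{-1} y + \tfrac12 b\cdot Ab .
\]
This uses the symmetry of $A$. The factor $\exp(\frac12 b\cdot Ab)$ comes out of the integral, and the half-space $\{x\cdot v\geq 0\}$ becomes $\{y\cdot v\geq b\cdot Av\}$.

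Next, substitute $y = A^{1/2}u$, with Jacobian $\sqrt{\det A}$. The integrand becomes the standard Gaussian $e^{-|u|^2/2}$, and the constraint becomes
\[
u\cdot A^{1/2}v \geq b\cdot Av .
\]
Set $w = A^{1/2}v/|A^{1/2}v|$, where $|A^{1/2}v| = \sqrt{v\cdot Av} > 0$ because $v\neq 0$ and $A$ is positive definite. Rotate coordinates so that $w$ is the first basis vector; this is allowed since $e^{-|u|^2/2}$ is rotation invariant. The remaining $d-1$ Gaussian integrals each contribute $\sqrt{2\pi}$. The first coordinate ranges over $u_1 \geq s := b\cdot Av/\sqrt{v\cdot Av}$.

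Finally, evaluate the one-dimensional integral:
\[
\int_s^\infty e^{-t^2/2}\,dt = \sqrt{2\pi}\,\tfrac12\,\mathrm{erfc}\bigl(s/\sqrt2\bigr),
\]
using the definition of $\mathrm{erfc}$ given in the paper. Collecting the factors gives $(2\pi)^{d/2}\sqrt{\det A} = \sqrt{\det 2\pi A}$, times $\frac12\,\mathrm{erfc}\bigl(b\cdot Av/\sqrt{2\,v\cdot Av}\bigr)$, times the prefactor $\exp(\frac12 b\cdot Ab)$. This is the stated identity.

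There is no real obstacle here. The only point needing care is the sign bookkeeping in the shift: the completed square uses $x = y - Ab$, so the threshold is $+\,b\cdot Av$, and this matches the sign inside the erfc in the claimed formula. Absolute convergence holds because of the Gaussian decay, which justifies Fubini in the rotated coordinates.
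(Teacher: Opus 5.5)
Your proof is correct, and it takes a more direct route than the paper's. The paper first diagonalizes $A$ and assumes a component $v_d\neq 0$. It then completes the square coordinatewise and integrates out $x_d$ first. This leaves an $\mathrm{erfc}$ whose argument is linear in the remaining $d-1$ variables. The paper rotates those variables into a single one and finishes with the auxiliary identity $\int_{-\infty}^\infty e^{-t^2}\mathrm{erfc}(\alpha+\beta t)\,dt=\sqrt\pi\,\mathrm{erfc}\bigl(\alpha/\sqrt{1+\beta^2}\bigr)$, proved by differentiating in $\alpha$.

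You instead whiten the Gaussian with $A^{1/2}$ before anything else, then rotate all $d$ variables at once. The half-space becomes the coordinate half-space $\{u_1\geq s\}$, and the integral factorizes into $(2\pi)^{(d-1)/2}$ times a one-dimensional Gaussian tail.

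This buys you several things:
\begin{itemize}
\item You need no auxiliary Gaussian--$\mathrm{erfc}$ convolution identity; that identity is itself essentially the case $d=2$ of the lemma.
\item You never single out a coordinate with $v_d\neq 0$.
\item You avoid a sign issue in the paper's rewriting of $x\cdot v\geq 0$ as a lower bound on $x_d$, which tacitly requires $v_d>0$.
\item The form of the answer becomes transparent: $\sqrt{v\cdot Av}=|A^{1/2}v|$ is the length of the whitened normal vector, and $s$ is the signed distance from the center of the standard Gaussian to the boundary hyperplane.
\end{itemize}
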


\begin{proof}
Since we can diagonalize $A$ by an orthogonal matrix, we may assume without loss of generality that $A$ is diagonal, say with eigenvalues $a_1, \ldots, a_d$. Without loss of generality we will assume $v_d\neq 0$. We rewrite the integral as 
\begin{align*}
\int_{x\cdot v\geq 0} & \exp\left(-\frac12 x\cdot A^{-1} x-b \cdot x\right) d^dx\\
&= \int_{\mathbb R^{d-1}} \int_{-\frac1{v_d} \sum_{k=1}^{d-1} v_k x_k}^\infty \exp\left(-\frac12 \sum_{k=1}^d \left(a_k^{-1} x_k^2+2 b_k x_k\right)\right) d^dx\\
&= \exp\left(\frac12\sum_{k=1}^d a_k b_k^2\right)
\int_{\mathbb R^{d-1}} \int_{\substack{\frac1{v_d}\sum_{k=1}^d b_k a_k v_k\\ - \frac1{v_d} \sum_{k=1}^{d-1} v_k x_k}}^\infty \exp\left(-\frac12 \sum_{k=1}^d a_k^{-1} x_k^2\right) d^dx\\
&= \sqrt{\det A}\exp\left(\frac12\sum_{k=1}^d a_k b_k^2\right) \\
&\quad \int_{\mathbb R^{d-1}} \int_{\frac1{v_d\sqrt{a_d}}\sum_{k=1}^d b_k a_k v_k}^\infty \exp\left(-\frac12 \sum_{k=1}^{d-1} x_k^2-\frac12 \left(x_d-\sum_{k=1}^{d-1} \frac{v_k}{v_d}\sqrt{\frac{a_k}{a_d}} x_k\right)^2\right) d^dx\\
&= \sqrt{\det A}\exp\left(\frac12\sum_{k=1}^d a_k b_k^2\right) \sqrt{\frac{\pi}2} \\
&\quad  \int_{\mathbb R^{d-1}} \exp\left(-\frac12\sum_{k=1}^{d-1} x_k^2\right) \mathrm{erfc}\left(\frac1{\sqrt 2}\sum_{k=1}^d \frac{b_k a_k v_k}{v_d\sqrt{a_d}}-\frac1{\sqrt 2}\sum_{k=1}^{d-1} \frac{v_k}{v_d} \sqrt{\frac{a_k}{a_d}} x_k\right) d^{d-1}x.
\end{align*}
Each vector in $x\in\mathbb R^{d-1}$ can be written in a unique way as $x=(x\cdot e_0) e_0+ \ldots + (x\cdot e_{d-1}) e_{d-1}$, where we define
\[
e_0 = \frac1{\sqrt{\sum_{k=1}^{d-1} a_k v_k^2}} \begin{pmatrix}
\sqrt{a_1} v_1\\ \vdots \\ \sqrt{a_{d-1}} v_{d-1},
\end{pmatrix}
\]
and $\{e_1, \ldots, e_{d-2}\}$ is any orthonormal basis for the orthogonal complement of $\{e_0\}$. 
Now consider the change of variables $y_k = e_k \cdot x$ where $k=0,\ldots,d-2$. Note that $|x|^2=y_0^2+\ldots+y_{d-2}^2$ since $\{e_0,\ldots, e_{d-2}\}$ forms an orthonormal basis. Note that the Jacobian matrix of our transformation is orthogonal, hence has determinant $1$. We find that
\begin{align*}
\int_{\mathbb R^{d-1}}& \exp\left(-\frac12\sum_{k=1}^{d-1} x_k^2\right) \mathrm{erfc}\left(\frac1{\sqrt 2}\sum_{k=1}^{d-1} \frac{b_k a_k v_k}{v_d\sqrt{a_d}}-\frac1{\sqrt 2}\sum_{k=1}^d \frac{v_k}{v_d} \sqrt{\frac{a_k}{a_d}} x_k\right) d^{d-1}x\\
&= (2\pi)^\frac{d-1}{2} \int_{-\infty}^\infty e^{-\frac12 y_0^2} \mathrm{erfc}\left(\frac1{\sqrt 2}\sum_{k=1}^d \frac{b_k a_k v_k}{v_d\sqrt{a_d}}-\frac1{\sqrt 2}\sqrt{\sum_{k=1}^{d-1} \frac{a_k v_k^2}{a_d v_d^2}} y_0\right) dy_0.
\end{align*}
The lemma now follows from the identity
\[
\int_{-\infty}^\infty e^{-t^2} \mathrm{erfc}(\alpha+\beta t) \, dt = \sqrt\pi \mathrm{erfc}\left(\frac{\alpha}{\sqrt{1+\beta^2}}\right),
\]
which can be proved, e.g., by differentiating with respect to $\alpha$. Indeed, we have
\[
\frac{(v_d \sqrt{a_d})^{-1}\sum_{k=1}^d b_k a_k v_k}{\sqrt{1+(v_d \sqrt{a_d})^{-2}\sum_{k=1}^{d-1} a_k v_k^2}}
= \frac{\sum_{k=1}^d b_k a_k v_k}{\sqrt{\sum_{k=1}^{d} a_k v_k^2}}
= \frac{b\cdot Av}{\sqrt{v\cdot Av}}.
\]
\end{proof}

\end{appendices}

\end{document}